\begin{document}
\newtheorem{lemme}{Lemma}[section]
\newtheorem{proposition}{Proposition}[section]
\newtheorem{cor}{Corollary}[section]
\numberwithin{equation}{section}
\newtheorem{theoreme}{Theorem}[section]
\theoremstyle{remark}
\newtheorem{example}{Example}[section]
\newtheorem*{ack}{Acknowledgment}
\theoremstyle{definition}
\newtheorem{definition}{Definition}[section]
\theoremstyle{remark}
\newtheorem*{notation}{Notation}
\theoremstyle{remark}
\newtheorem{remark}{Remark}[section]
\newenvironment{Abstract}
{\begin{center}\textbf{\footnotesize{Abstract}}%
\end{center} \begin{quote}\begin{footnotesize}}
{\end{footnotesize}\end{quote}\bigskip}
\newenvironment{nome}
{\begin{center}\textbf{{}}%
\end{center} \begin{quote}\end{quote}\bigskip}

\newcommand{\triple}[1]{{|\!|\!|#1|\!|\!|}}
\newcommand{\xx}{\langle x\rangle}
\newcommand{\ep}{\varepsilon}
\newcommand{\al}{\alpha}
\newcommand{\be}{\beta}
\newcommand{\de}{\partial}
\newcommand{\la}{\lambda}
\newcommand{\La}{\Lambda}
\newcommand{\ga}{\gamma}
\newcommand{\del}{\delta}
\newcommand{\Del}{\Delta}
\newcommand{\sig}{\sigma}
\newcommand{\ome}{\Omega^n}
\newcommand{\Ome}{\Omega^n}
\newcommand{\C}{{\mathbb C}}
\newcommand{\N}{{\mathbb N}}
\newcommand{\Z}{{\mathbb Z}}
\newcommand{\R}{{\mathbb R}}
\newcommand{\T}{{\mathbb T}}
\newcommand{\Rn}{{\mathbb R}^{n}}
\newcommand{\Rnu}{{\mathbb R}^{n+1}_{+}}
\newcommand{\Cn}{{\mathbb C}^{n}}
\newcommand{\spt}{\,\mathrm{supp}\,}
\newcommand{\Lin}{\mathcal{L}}
\newcommand{\SSS}{\mathcal{S}}
\newcommand{\F}{\mathcal{F}}
\newcommand{\xxi}{\langle\xi\rangle}
\newcommand{\eei}{\langle\eta\rangle}
\newcommand{\xei}{\langle\xi-\eta\rangle}
\newcommand{\yy}{\langle y\rangle}
\newcommand{\dint}{\int\!\!\int}
\newcommand{\hatp}{\widehat\psi}
\renewcommand{\Re}{\;\mathrm{Re}\;}
\renewcommand{\Im}{\;\mathrm{Im}\;}
\title[ 
Modified energies for the periodic gKdV
 ]
{
Modified energies for the periodic generalized KdV equation and applications 
}
  \author{Fabrice Planchon}
  \address{ Sorbonne Universit\'e, CNRS, IMJ-PRG F-75005 Paris, France}
  \email{fabrice.planchon@sorbonne-universite.fr} 
\author{Nikolay Tzvetkov}
\address{
CY Cergy Paris
Universit\'e, Laboratoire AGM,  Cergy-Pontoise, F-95000,UMR 8088 du CNRS
}
\email{nikolay.tzvetkov@u-cergy.fr}
\author{Nicola Visciglia}
\address{Dipartimento di Matematica, Universit\`a di Pisa, Italy}
\email{nicola.visciglia@unipi.it}
\thanks{ The first author was supported by ERC grant ANADEL no 757996, 
the second author by ANR grant ODA (ANR-18-CE40-0020-01), the third author acknowledge the Gruppo Nazionale per l' Analisi Matematica, la Probabilit\`a e le loro Applicazioni (GNAMPA) 
of the Istituzione Nazionale di Alta Matematica (INDAM)} 
\date{\today}
 \maketitle
 \par \noindent
\centerline {\em Dedicated to the memory of Professor Jean Ginibre}
\begin{abstract}
We construct modified energies for the generalized KdV equation. As a consequence,  we obtain quasi-invariance of the high order Gaussian measures
along with $L^q$ regularity on the corresponding Radon-Nykodim density, as well as new bounds on the growth of the Sobolev norms of the solutions.
\end{abstract}
\section{Introduction}
\subsection{Integrable KdV models and their conservation laws}
The Korteweg de Vries (KdV) and modified Korteweg de Vries (mKdV) equations are canonical integrable partial differential equations which read 
\begin{equation}\label{KdV_mKdV}
\partial_t u - \partial_{x}^3 u + \partial_x(u^{p+1})= 0,\quad p=1,2,
\end{equation}
with $p=1$ for KdV equation and $p=2$ for mKdV. The well-known Miura transform may be used to connect solutions of KdV and mKdV.
Both equations \eqref{KdV_mKdV} have a Lax pair formulation and as a consequence, possess infinitely many conservation laws (see e.g.   \cite{Kappeler,schur,Zhidkov} and references therein).  
 One important consequence of these conservation laws is a priori global in time bound for the Sobolev norm $H^k$ of the solutions of  \eqref{KdV_mKdV}:  for every $k\in\N$, there exists a first integral that writes
 $$
 \int (\partial_x^k u)^2 dx +{\rm lower\,\, order\,\,terms}
 $$
where integration holds on the line $\R$ or on a torus $\R/(2\pi\Z)$ depending on which background \eqref{KdV_mKdV} is considered. 
{  Following \cite{Zhidkov}, another interesting consequence of the aforementioned conservation laws is the existence of invariant measures along the flow associated with \eqref{KdV_mKdV}; such measures are also absolutely continuous 
with respect to Gaussian measures (see the subsection \ref{quamuk} for their definition).}
 \subsection{Generalized KdV models and their almost conservation laws}
The Lax pair formulation is no longer known to hold if the power $p$ is replaced by $p\neq 1,2$ in \eqref{KdV_mKdV}. Therefore  a natural question is whether conservation laws for KdV and mKdV may still hold in the case of a general power $p$.
We introduce in this paper a family of energies which are not exact conservation laws, however will be useful to get
 new results for the long time behavior of the solutions to \eqref{gKdV}, as we shall specify in the sequel. %
 \\
 
 Consequently, we consider the Cauchy problem, which we call the generalized KdV (gKdV) equation,
\begin{equation}\label{gKdV}
\begin{cases}
\partial_t u - \partial_{x}^3 u + \partial_x(u^{p+1})= 0, \quad p \geq 3, \quad p\in 2\N,\\%
u(0, x)=\varphi(x)\in H^k(\mathbb T),
\end{cases}
\end{equation}
where $\T$ denotes the one dimensional torus $\R/(2\pi\Z)$. 
Even if \eqref{gKdV} does not have a Lax pair formulation it still has an Hamiltonian formulation, with conserved Hamiltonian ${\mathcal H(u)}$:
$$
{\mathcal H}(u)= \frac 12 \int_\T (\partial_x u)^2 dx + \frac 1{p+2} \int_\T u^{p+2} dx\,.
$$ 
That ${\mathcal H}$ is preserved along the flow of \eqref{gKdV} and some of its truncations will play a key role in our analysis. 
\\

Equation \eqref{gKdV} is the so called defocusing model. The focusing model can be obtained by a change of sign in front of the nonlinear term in \eqref{gKdV} (which also changes the sign in front of the second term defining ${\mathcal H}$).  
Most of the results that we will obtain below can be extended to either odd $p$ or to the focusing gKdV for even $p$, provided that a uniform bound on the $H^1$ norm of the solution is assumed. These extensions do not require any new significant argument and therefore we restrict ourselves to \eqref{gKdV}. 
\\

The Cauchy problem \eqref{KdV_mKdV} has a long and interesting history. The first results were mainly dealing with \eqref{gKdV} on the spatial domain $\R$ rather than the torus $\T$. A classical reference is the work by Saut \cite{saut}. In the fundamental paper \cite{kato}, Kato initiated the use of the dispersive properties in the analysis of \eqref{gKdV}.  In \cite{KPV1,KPV2,KPV3} Kenig-Ponce-Vega implemented tools from harmonic analysis allowing to have a deeper view on the dispersive effect. However, these tools were not efficient in the periodic case. The dispersive effect in the periodic case was understood by Bourgain in \cite{B1}. The methods developed by Bourgain had far reaching consequences in the field of dispersive PDE's. We will benefit of Bourgain's seminal work in this paper too.  In the case $p=4$ and focusing nonlinearity, when \eqref{gKdV} is posed on the real line, there are remarkable results by Martel-Merle-Rapha\"el on solutions developing singularities in finite time, see \cite{MMR1,MMR2,MMR3} and the references therein.  It is not clear how to extend the results of   \cite{MMR1,MMR2,MMR3}  to the periodic case  because on $\R$ the blow-up point may escape to infinity and therefore its localization seems a considerable challenge.  For a recent progress in this direction, we refer to \cite{MP}. 
\\

Along the paper we shall also need to analyze Fourier truncations of \eqref{gKdV}.  
For a given even integer $p\geq 3$ and $M\in \N$, we consider the following truncated version of the gKdV equation \eqref{gKdV},
\begin{equation}\label{gKdVtrunc}
\begin{cases}
\partial_t u -\partial_x^3 u+\pi_M \partial_x((\pi_M u)^{p+1}) =0, \\
u(0,x)=\varphi(x).
\end{cases}
\end{equation}
where for $M\in \N$ we denote by $\pi_M$, the Dirichlet projector defined by
$
\pi_M\Big(
\sum_{n\in\Z}c_n e^{inx}
\Big)
=
\sum_{|n|\leq M}c_n e^{inx}\,.
$
Thanks to the aforementioned references on the Cauchy problem for \eqref{gKdV},  solving \eqref{gKdVtrunc} will not be an issue here because we know from \cite{B1,B2,CKSTT,Staf1} that \eqref{gKdVtrunc} is globally well-posed in $H^s(\T)$, $s\geq 1$ (and even locally well-posed for $s\geq \frac {1} {2}$), { where $H^{s}(\T)$ is defined through the Fourier transform as $((1+|n|^{s}) c_{n})_{n} \in l^{2}(\Z)$}. In fact, for $M<\infty$ an easier argument can be applied by using that for frequencies $\leq M$ the equation \eqref{gKdVtrunc} becomes an ODE (with a Lipschitz vector field) for which global well-posedness holds thanks to the $L^2$ conservation law, while for frequencies $>M$ the equation \eqref{gKdVtrunc} becomes a linear equation. However, if one wishes to have $H^s(\T)$ bounds uniformly with respect to $M$, the analysis of \cite{B1,B2,CKSTT,Staf1}  cannot be avoided. Let us denote by $\Phi_M(t)$ the flow of  \eqref{gKdVtrunc}
and $\Phi(t)$ the flow of  \eqref{gKdV}. We shall note specify the dependence on $p$ in these notations. \\

{ Our aim is twofold: first, we provide new
a priori polynomial bounds on the growth of high order Sobolev norm for solutions to \eqref{gKdV} (see Theorem \ref{growth}); second, we gain knowlegde about transport of Gaussian measures along the flow associated with \eqref{gKdV} (see Theorem \ref{quasiinvariance}).}
\\

Let us introduce a few notations that will be used in the sequel: for every $s\in \R$ and for every $p\in [1,\infty]$ 
denote $H^s=H^s(\T)$  and $L^p=L^p(\T)$; $H^{s^-}$ will denote any Sobolev space $H^{s-\varepsilon}$ with $\varepsilon>0$ small enough.
Similarly, given a real number $\alpha$, denote by $\alpha^+$ any real number larger than $\alpha$. We shall denote $\N=\{0,1,2, \cdots\}$
and for every $k\in \N\setminus \{0\}$
we shall use the following specific Sobolev norm: $\|\varphi\|_{H^k}^2=\|\varphi\|_{L^2}^2+ \|\partial_x^k \varphi\|_{L^2}^2$.
We recall that $\Phi(t)$ and $\Phi_M(t)$ denote respectively the 
nonlinear flows associated with \eqref{gKdV} and \eqref{gKdVtrunc}. We shall also use sometimes the notation $\Phi(t)=\Phi_\infty(t)$.
The Dirichlet projector on frequencies smaller or equal than $M$ is denoted by $\pi_M$ and we also
use sometimes the notation $\pi_\infty$ to denote the identity operator.
\\

We can now state our main results.

\subsection{Growth of Sobolev norms}\label{grO}
The first main result of this paper is the following one.
\begin{theoreme}\label{growth} Let $k>1$ be an integer. Then for every  $\varphi \in H^k$ and for every $\varepsilon>0$ there exists a constant $C>0$ such that
\begin{equation}\label{grsh}\|\Phi(t) \varphi\|_{H^k}\leq \| \varphi\|_{H^k}+C\, t^{\frac{k-1}{2}+\varepsilon}, \quad \forall t>0.\end{equation}
\end{theoreme}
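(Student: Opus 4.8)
The plan is to argue by induction on $k$, anchoring the base case in the conservation laws and propagating the bound with a family of \emph{modified energies}. The case $k=1$ follows at once from the conservation of the mass $\int_\T u^2\,dx$ and of the Hamiltonian $\mathcal H$, which together bound $\|\Phi(t)\varphi\|_{H^1}$ uniformly in $t$. To keep every integration by parts legitimate I would run the whole scheme first on the truncated equation \eqref{gKdVtrunc}, whose solutions are smooth and global, derive all bounds with constants independent of $M$ (the conserved quantities of \eqref{gKdVtrunc} give the same uniform $H^1$ control, now for $\pi_M u$), and only at the end let $M\to\infty$ and invoke the convergence of $\Phi_M$ to $\Phi$.

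For the inductive step I assume $\|\Phi(t)\varphi\|_{H^{k-1}}\lesssim\langle t\rangle^{(k-2)/2+\ep}$ and aim at the same statement for $k$. Writing $u=\Phi(t)\varphi$ and differentiating the leading energy, the linear part drops by integration by parts and
\[
\frac{d}{dt}\,\frac12\int_\T(\partial_x^k u)^2\,dx=\int_\T \partial_x^{k+1}u\,\partial_x^k(u^{p+1})\,dx.
\]
Expanding $\partial_x^k(u^{p+1})$ by Leibniz, the only contributions that cannot be reduced by integration by parts to two factors of order $\le k-1$ come from the patterns where all, or all but one, of the derivatives fall on a single factor; collecting them yields a \emph{diagonal} term $-c_0\int_\T u^{p-1}(\partial_x u)(\partial_x^k u)^2\,dx$ with $c_0\neq0$, which is only controlled by $\|u\|_{H^k}^2$ and would force exponential growth through Gronwall. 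The key point is that it is cancelled by adding the lower order correction
\[
E_k(u)=\frac12\int_\T(\partial_x^k u)^2\,dx+c_{k,p}\int_\T u^{p}(\partial_x^{k-1}u)^2\,dx,
\]
because the linear part of the flow applied to a weighted energy $\int Z(u)(\partial_x^{m}u)^2\,dx$ produces, as its top order diagonal contribution, a multiple of $\int Z'(u)(\partial_x u)(\partial_x^{m+1}u)^2\,dx$; taking $m=k-1$ and $Z(u)=c_{k,p}u^p$ reproduces exactly the profile $u^{p-1}(\partial_x u)(\partial_x^k u)^2$, and $c_{k,p}$ is then fixed to kill $c_0$. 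Since $|c_{k,p}\int u^p(\partial_x^{k-1}u)^2\,dx|\lesssim\|u\|_{H^{k-1}}^2$, one has $E_k(u)\approx\|u\|_{H^k}^2$ up to quantities already under inductive control.

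After this cancellation every surviving term in $\tfrac{d}{dt}E_k(u)$ has its two highest-order factors at orders summing to at most $2k-1$, with at most one of them equal to $k$; each can be integrated by parts once more — using in particular $\int_\T g\,\partial_x^k u\,\partial_x^{k-1}u\,dx=-\tfrac12\int_\T g_x(\partial_x^{k-1}u)^2\,dx$ — to balance the derivatives and then be bounded, via the uniform $H^1$ bound and Sobolev embedding of the low order factors, by $\lesssim\|u\|_{H^{k-1}}^2$. Integrating $|\tfrac{d}{dt}E_k(u)|\lesssim\|u\|_{H^{k-1}}^2$ and inserting the inductive bound gives
\[
\|u(t)\|_{H^k}^2\lesssim\|\varphi\|_{H^k}^2+\int_0^t\|u(s)\|_{H^{k-1}}^2\,ds\lesssim\|\varphi\|_{H^k}^2+t^{\,k-1+\ep},
\]
and taking square roots produces the additive bound $\|\varphi\|_{H^k}+Ct^{(k-1)/2+\ep}$; at $k=2$ the integrand is the bounded quantity $\|u\|_{H^1}^2$, giving precisely the rate $t^{1/2}$.

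I expect the main obstacle to be the design of the correction and the attendant bookkeeping. Weighting the top order energy $\int(\partial_x^k u)^2$ by a function of $u$ would create a diagonal term one derivative higher, leading to an infinite regress; the whole point is that weighting instead the $(k-1)$-th energy keeps the correction of lower order while still generating, through the linear part of the flow, a diagonal of order exactly $k$ with the required profile. One has to verify that this single correction removes all non-reducible diagonal contributions and that differentiating it in time produces no new term of order $>2k$. Two further technical points demand care: uniformity in $M$, since the Dirichlet projector $\pi_M$ does not commute with multiplication and generates commutator terms $[\pi_M,u^p]$ that must be shown harmless; and the smallest values $k=2,3$, where some low order coefficients produced by the integrations by parts are not a priori in $L^\infty$ and should be handled by interpolation rather than by Sobolev embedding.
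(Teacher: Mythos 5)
Your overall architecture (a modified energy whose time derivative loses two derivatives relative to the leading term, fed into a Gronwall/induction scheme) is the right one, and your exponent arithmetic at the end is consistent with the rate $t^{\frac{k-1}{2}+\varepsilon}$. But the central estimate $|\frac{d}{dt}E_k(u)|\lesssim \|u\|_{H^{k-1}}^2$ does not hold with the tools you invoke, and this is a genuine gap rather than bookkeeping. After cancelling the diagonal term $\int u^{p-1}\partial_x u\,(\partial_x^k u)^2$, the surviving densities still carry $2k+1$ derivatives in total; even in the most favourable configuration, putting two factors of order $k-1$ in $L^2$ leaves three derivatives to distribute on the remaining factors, which must then be placed in $L^\infty$. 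A factor such as $\partial_x^3 u$ (or even $\partial_x u$) in $L^\infty_x$ is \emph{not} controlled by the conserved $H^1$ norm, and interpolating it against $H^k$ or $H^{k-1}$ pushes the total power of the high norm strictly above $2$, which destroys both your induction and any Gronwall closure. For instance the term $\int(\partial_x^{k-1}u)^2\,\partial_x^3u\,u^{p-1}$, which is produced by exactly the integration by parts $\int g\,\partial_x^k u\,\partial_x^{k-1}u = -\tfrac12\int g_x(\partial_x^{k-1}u)^2$ that you propose, already defeats the scheme. This is precisely why the paper's proof is not "elementary": the multilinear space--time integrals are estimated using Bourgain's $L^6$ Strichartz bound (Proposition \ref{Cauchy}, estimate \eqref{unifL6bOU}), which controls $\|\partial_x^{j}v\|_{L^6_tL^6_x}$ by $\|\varphi\|_{H^{j^+}}$ — a gain of essentially half a derivative over Sobolev embedding on each factor — and this dispersive gain is what brings the total power of $\|\varphi\|_{H^k}$ below $2$ in \eqref{GestimateA}.

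A second, related gap is the claim that the single correction $c_{k,p}\int u^p(\partial_x^{k-1}u)^2$ removes all non-reducible contributions. Differentiating it in time through the linear flow produces, besides the desired profile, new terms such as $\int u^{p-1}\partial_x^2u\,\partial_x^{k-1}u\,\partial_x^k u$ whose reduction regenerates the same problematic densities one step lower in the hierarchy. The paper's Propositions \ref{min} and \ref{sett} show that one needs the full family of corrections $\int\mathcal I_{j_0,j_1,j_2}(u)$ indexed by $\mathcal A^k=\{2j_0+j_1+j_2=2k-2\}$, with coefficients chosen by a downward induction on the ordering $\prec$, so that the final remainder consists only of densities in $\Theta_k$ and $\Omega_k$, i.e.\ with derivatives spread over at least three (resp.\ five) factors and top order at most $k-1$ — the structural property that makes the $L^6$ estimates close. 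Finally, note that the paper does not induct on $k$ at all: it runs a discrete Gronwall in time directly at level $k$, exploiting that the power $(\frac{2k-4}{k-1})^+$ of $\|\varphi\|_{H^k}$ in \eqref{GestimateA} is strictly less than $2$.
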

Although Theorem \ref{growth} is stated for $t>0$, it can be extended to every $t\in \R$ by the reversibility of the flow associated with \eqref{gKdV}. {  Moreover, the constant $C$ may be chosen uniformly with respect to bounded sets in $H^{k}$, and, even better, in $H^{1}$, once $\varepsilon$ has been chosen (through the use of Proposition \ref{Cauchy}).}

The line of research leading to results as the one in Theorem~\ref{growth} was initiated in \cite{B5}. 
We improve results obtained in \cite{Staf1} (where the growth had exponent $2k$) and \cite{Atanas} (where the growth was lowered to $k-1+\varepsilon$.)
\\

It would be very interesting to construct solutions of the defocusing gKdV such that the $H^k$ norms do not remain bounded in time for $k>1$. Unfortunately such results are rare in the context of canonical dispersive models 
(with the notable exception of \cite{HPTV}).
{  Our approach is restricted so far to 1d models and it would be very interesting to extend it to higher dimensions, however we believe it does apply to the 1d NLS modulo some additional difficulties related to complex valued functions. This will be addressed elsewhere.
\\

The proof of Theorem \ref{growth} stems from  constructing suitable modified energies at the level of the $H^{k}$ norms: while we are unable to obtain invariant energies,  we quantify how far they are from being exact conservation laws. The closer we are to conservation laws, the better are polynomial bounds for corresponding nonlinear solutions.}

\begin{theoreme}\label{basicA}
Let $k>1$ be an integer. Then for every $T>0$  there exist functionals
$${\mathcal E}_k:H^k\longrightarrow \R, \quad {\mathcal G}_{k}^T:H^k\longrightarrow \R,$$
such that:
\begin{itemize} 
\item for all $\varphi\in H^k$ and all $T>0$ we have:
\begin{equation}\label{timederA}
{\mathcal E}_k(\Phi(T) \varphi) - {\mathcal E}_k(\varphi)= {\mathcal G}_{k}^T(\varphi);
\end{equation}
\item  the energy ${\mathcal E}_k$ has the following structure:
\begin{equation}\label{rku}
{\mathcal E}_k(u)= \|\partial_x^k u\|_{L^2}^2 + \mathcal R_k(u),
\end{equation}
with ${\mathcal R}_k(u)$ satisfying for all $u\in H^k$:
\begin{equation}\label{RestimateA}
|\mathcal R_k(u)|\leq C +C  \|u\|_{H^k}^{\frac{2k-4}{k-1}}\|u\|^{p+\frac{2}{k-1}}_{H^1};
\end{equation}
\item for every fixed $R>0$ and $T>0$, there exists a constant $C>0$ such that, for all $\varphi\in H^k$ such that  $\|\varphi\|_{H^1}<R$ we have:
\begin{equation}\label{GestimateA}
|{\mathcal G}_{k}^T(\varphi)|
\leq C +C \|\varphi\|_{H^{k}}^{(\frac{2k-4}{k-1})^+}.
\end{equation}
\end{itemize}
\end{theoreme}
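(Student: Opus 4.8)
The plan is to build $\mathcal{E}_k$ as the leading energy $\|\partial_x^k u\|_{L^2}^2$ corrected by a single explicit multilinear functional, and to read off $\mathcal{G}_k^T$ from the fundamental theorem of calculus. I would first reduce to smooth solutions: for $\varphi$ smooth the flow stays smooth, so all the integrations by parts below are licit, and the identity \eqref{timederA} holds by definition once we set $\mathcal{G}_k^T(\varphi):=\mathcal{E}_k(\Phi(T)\varphi)-\mathcal{E}_k(\varphi)$; the estimates \eqref{RestimateA}--\eqref{GestimateA} then extend to $H^k$ by density and continuity of $\Phi(t)$ on $H^k$ (Proposition \ref{Cauchy}). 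Differentiating $\|\partial_x^k u\|_{L^2}^2$ along \eqref{gKdV} and using that the Airy contribution $\int \partial_x^k u\,\partial_x^{k+3}u\,dx$ vanishes on $\T$, I would expand $\partial_x^{k+1}(u^{p+1})$ by the Leibniz rule and collect, after one integration by parts, the unique fully-resonant contribution $\kappa_p\int u^{p-1}(\partial_x u)(\partial_x^k u)^2\,dx$ carrying two top derivatives on a single $\partial_x^k u$; every other term, whose sum I denote by $\mathcal{Q}$, involves, besides one factor $\partial_x^k u$, only factors of order at most $k-1$.

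Next I would define the correction $\mathcal{R}_k(u)=c_p\int u^p(\partial_x^{k-1}u)^2\,dx$ and fix the constant $c_p$ so that the leading (Airy) part of $\tfrac{d}{dt}\mathcal{R}_k$, namely $3c_p\int \partial_x(u^p)(\partial_x^k u)^2\,dx$, exactly cancels $\kappa_p\int u^{p-1}(\partial_x u)(\partial_x^k u)^2\,dx$. Setting $\mathcal{E}_k=\|\partial_x^k u\|_{L^2}^2+\mathcal{R}_k$ gives the structure \eqref{rku}. The bound \eqref{RestimateA} is then the easy part: since $H^1(\T)\hookrightarrow L^\infty(\T)$,
$$|\mathcal{R}_k(u)|\lesssim \|u\|_{L^\infty}^p\,\|\partial_x^{k-1}u\|_{L^2}^2\lesssim \|u\|_{H^1}^p\,\|u\|_{H^{k-1}}^2,$$
and the Gagliardo--Nirenberg interpolation $\|u\|_{H^{k-1}}\lesssim \|u\|_{H^k}^{\frac{k-2}{k-1}}\|u\|_{H^1}^{\frac{1}{k-1}}$ produces exactly the exponents $\frac{2k-4}{k-1}$ on $\|u\|_{H^k}$ and $p+\frac{2}{k-1}$ on $\|u\|_{H^1}$.

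The real work is \eqref{GestimateA}, that is, bounding $\mathcal{G}_k^T(\varphi)=\int_0^T\tfrac{d}{dt}\mathcal{E}_k(u(t))\,dt$. After the cancellation the integrand is $\mathcal{Q}$ together with the non-cancelled (and lower-order, nonlinear) pieces of $\tfrac{d}{dt}\mathcal{R}_k$; all of these are multilinear with total derivative count $2k+1$ and at most two high factors, of orders $k$ and $k-1$. A purely pointwise-in-time estimate is too lossy here -- the best static bound overshoots the exponent $2$ -- so I would instead integrate in time in the interaction representation and exploit the non-resonance of the Airy phase $\sum_j n_j^3$: once the exactly-resonant diagonal (the configuration where the two high frequencies cancel, which is precisely what $\mathcal{R}_k$ was built to remove) has been subtracted, the remaining interactions satisfy $|\sum_j n_j^3|\gtrsim N^2$ with $N$ the top frequency, so the time integration gains essentially two derivatives (equivalently, one estimates the time integral by bilinear Bourgain $X^{s,b}$ bounds in the spirit of \cite{B1}). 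This gain lowers the effective derivative count from $2k+1$ down to the level of $\|u\|_{H^{k-2}}^2$-type quantities, which interpolate below the target exponent; the small loss $(\cdot)^+$ absorbs the near-resonant transition regime and the endpoint embedding $H^{\frac12+}(\T)\hookrightarrow L^\infty(\T)$ used on the low-frequency factors.

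Finally, uniform control of all low-order factors comes from conservation of $\mathcal{H}$ and of the $L^2$ norm, which bounds $\|u(t)\|_{H^1}$ uniformly in $t$ for $\|\varphi\|_{H^1}<R$, while $\sup_{[0,T]}\|u(t)\|_{H^k}$ is controlled by $\|\varphi\|_{H^k}$ through Proposition \ref{Cauchy}; together these turn the estimate into the asserted bound in terms of $\|\varphi\|_{H^k}$. The main obstacle, and the technical heart of the argument, is exactly this sharp multilinear estimate: organizing the combinatorially many Leibniz terms, separating the resonant and non-resonant frequency regimes, and extracting the derivative gain uniformly so as to land precisely on the exponent $\big(\tfrac{2k-4}{k-1}\big)^+$.
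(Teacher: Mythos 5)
Your overall architecture (leading term plus correction, $\mathcal{G}_k^T$ defined as the time integral of the residual derivative, density argument, conservation of $\mathcal H$ for the $H^1$ control) matches the paper, and your bound for \eqref{RestimateA} is essentially the paper's computation \eqref{proofrepeat}. But the core of your construction has a genuine gap: a \emph{single} correction $\mathcal R_k(u)=c_p\int u^p(\partial_x^{k-1}u)^2$ does not suffice. Differentiating it in time and integrating by parts produces, besides the term $3c_p\int\partial_x(u^p)(\partial_x^ku)^2$ you use for the cancellation, further contributions such as $\int \partial_x^2(u^p)\,\partial_x^k u\,\partial_x^{k-1}u$ and $\int u^{p-1}\partial_x^3u\,(\partial_x^{k-1}u)^2$, i.e.\ new densities with $2k+1$ derivatives concentrated on three or four factors, one of order $k$ and one of order $k-1$. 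No static estimate, nor the $L^6$ Strichartz bound \eqref{unifL6bOU}, brings such a term below exponent $\tfrac{2k-3}{k-1}>\tfrac{2k-4}{k-1}$ in $\|\varphi\|_{H^k}$. This is exactly why the paper takes $\mathcal R_k$ to be a linear combination of the \emph{whole family} $\int\mathcal I_{j_0,j_1,j_2}(u)=\int u^{p-2}(\partial_x^{j_0}u)^2\partial_x^{j_1}u\,\partial_x^{j_2}u$ over $(j_0,j_1,j_2)\in\mathcal A^k$ (i.e.\ $2j_0+j_1+j_2=2k-2$), with coefficients $\lambda_{j_0,j_1,j_2}$ fixed by a triangular induction on the ordering $\prec$ (Propositions \ref{min}, \ref{sett}, \ref{key}): cancelling one bad density regenerates bad densities strictly lower in the ordering, and the iteration terminates only after all of $\mathcal A^k$ has been used. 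The endpoint of that iteration is that every surviving density lies in $\Theta_k$ or $\Omega_k$, i.e.\ has its derivatives spread over at least three (resp.\ five) factors with top order at most $k-1$, and only then do H\"older, Sobolev, interpolation and \eqref{unifL6bOU} land on $(\tfrac{2k-4}{k-1})^+$.

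Your proposed repair — integrating in time and exploiting non-resonance of the Airy phase via bilinear $X^{s,b}$ estimates — is a genuinely different route from the paper (which uses no phase analysis at all in estimating $\mathcal G_k^T$), but as sketched it does not close. For the $(p+2)$-linear interactions with $p\geq 4$ even, the resonant set $\{\sum_j n_j=0,\ \sum_j n_j^3=0\}$ is far larger than the high-high diagonal $n_1=-n_2$ that your single correction addresses, and in the near-resonant region $0<|\sum_j n_j^3|\ll N^2$ a normal-form correction carries a small divisor that must itself be shown to be bounded and to have a controllable time derivative; none of this is addressed, and the claim that the $(\cdot)^+$ loss "absorbs the near-resonant transition regime" is precisely the assertion that would need proof. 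As it stands, the step from "all remaining terms have at most one factor of order $k$" to the bound \eqref{GestimateA} is the heart of the theorem and is missing.
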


{ We emphasize that Theorem \ref{basicA} immediatly holds, in a stronger form, for $k=0,1$, with energies ${\mathcal E}_0(u)=\|u\|_{L^2}^2$, ${\mathcal E}_1(u)={\mathcal H}(u)$; those energies 
are indeed conserved along the flow associated with \eqref{gKdV}. However, for $k>1$, such exact conservation laws are not available and hence 
the r.h.s. in \eqref{timederA} is non zero.}
\\

{ The proof of Theorem~\ref{basicA}  relies on a key improvement on some ideas developed in \cite{PTV0} 
in the context of NLS,
together with bounds resulting from dispersive estimates such as Bourgain's $L^6$ Strichartz inequality for (linear) KdV.
Let us now briefly sketch the main point: adaptating the argument  in \cite{PTV0} would allow to get suitable energies $\tilde {\mathcal E}_k$ such that 
$\tilde {\mathcal E}_k(\Phi(T) \varphi)-\tilde{\mathcal E}_k(\varphi)= \tilde {\mathcal G}_{k}^T(\varphi)$, where $\tilde {\mathcal G}_{k}^T$ would satisfy $|\tilde {\mathcal G}_{k}^T(\varphi))| 
\leq C +C \|\varphi\|_{H^{k}}^{\alpha(k)}$, but with $\alpha(k)>(\frac{2k-4}{k-1})^+$. Here, we get a significantly smaller power of the $H^{k}$ norm of the initial datum in \eqref{GestimateA} for ${\mathcal G}_{k}^T$. %
This improvement on $\alpha(k)$, which in turn is crucial in order to obtain the growth
of the Sobolev norms in Theorem~\ref{growth}, follows from a refinement of the construction of  the energies ${\mathcal E}_k$ compared with $\tilde {\mathcal E}_k$. In particular, once we compute the variation of the energies introduced in this paper along solutions, we get that ${\mathcal G}_{k}^T(\varphi)$ can be expressed as the space-time integral of multilinear expression of densities in which the worse single terms (namely the ones  
that carry the maximal number of derivatives) have at least five factors involving at least one derivative. This key property
of distributing derivatives on several factors was out of reach with our previous constructions of modified energies. For details we refer to Section~\ref{N=infty}. Then the dispersive effect, through the $L^6$ Strichartz bound, allows us to transform the aforementioned distribution of derivatives in terms of powers of Sobolev norms of the initial datum, as discussed above.}
\\

We should point out that, in the context of gKdV, modified energies similar to the ones we get 
in Theorem \ref{basicA}
already appeared, at the level of $k=2$, in \cite{MartelAJM}, where they are used in connection with $N$-soliton asymptotics.\\

{ For details on how Theorem~\ref{basicA} implies Theorem~\ref{growth} we refer to \cite{PTV0}, however for the sake of completness
we briefly outline the argument. By local Cauchy theory, for any given $\varphi\in H^k$ with $k>1$,
there exists $T=T(\|\varphi\|_{H^1})$ and a constant $C=C(\|\varphi\|_{H^1})$ such that 
$\sup_{t\in (0, T)} \|\Phi(t)\varphi\|_{H^k}\leq C \|\varphi\|_{H^k}$.
From $\sup_t  \|\Phi(t)\varphi\|_{H^1}<\infty$ (recall we are considering the defocusing equation), we may use the estimate above  uniformly, selecting as initial condition $\Phi(s)\varphi$ for arbitrary $s$ and hence
\begin{equation}\label{localth}\sup_{t\in (s, s+T)}\|\Phi(t)\varphi\|_{H^k}\leq C \|\Phi(s)\varphi\|_{H^k}, \quad \forall s>0.\end{equation}
Combining \eqref{timederA} with \eqref{rku} and recalling conservation of the $L^2$ norm, we get, for our chosen $T$,
$$\|\Phi(t+T)\varphi\|_{H^k}^2 -\|\Phi(t)\varphi\|_{H^k}^2={\mathcal G}_{k}^T(\Phi(t) \varphi)-{\mathcal R}_k
(\Phi(t+T)\varphi)+ {\mathcal R}_k
(\Phi(t)\varphi)$$ 
In turn,
\eqref{GestimateA} and \eqref{RestimateA} (recalling that the $H^1$ norm on the r.h.s. is uniformly bounded) together imply,
\begin{align*}
\|\Phi(t+T)\varphi\|_{H^k}^2 - \|\Phi(t)\varphi\|_{H^k}^2 
\leq  & C +C \|\Phi(t+T)\varphi\|_{H^{k}}^{(\frac{2k-4}{k-1})^+} +C \|\Phi(t)\varphi\|_{H^{k}}^{(\frac{2k-4}{k-1})^+}\\
 \leq & C +C \|\Phi(t)\varphi\|_{H^{k}}^{(\frac{2k-4}{k-1})^+}\,,
\end{align*}
 for all $t>0$, where we used \eqref{localth} at the last step. Choosing $t=nT$ and defining 
$\alpha_n=\|\Phi_{nT}(\varphi)\|_{H^k}^2$ we get a discrete Gronwall-type inequality
$\alpha_{n+1}-\alpha_n\leq C + C \alpha_n^{(\frac{k-2}{k-1})^+}$ ; this yields $\alpha_n\leq C n^{(k-1)^+}$ which in turn implies \eqref{grsh} for $t=nT$. Using \eqref{localth}, this  extends to all $t$.}

\subsection{Quasi-invariant Gaussian measures}\label{quamuk}

Next we focus on the quasi-invariance of certain Gaussian measures under $\Phi(t)$. 
For $k\geq 1$, we denote by $\mu_k$ the measure induced by the map 
\begin{equation*}
\omega\longmapsto \sum_{n\in\Z}\frac{g_n(\omega)}{(1+n^2)^{k/2}}\, e^{inx} \,,
\end{equation*}
where  $g_n=\overline{g_{-n}}$, $g_0=0$ and $(g_n)_{n>0}$ is a sequence of independent, identically distributed complex Gaussian random variables.
We can see $\mu_k$ as a probability measure on $H^{(k-\frac 12)^-}$. 
We aim at understanding how $\mu_k$ is transported under the flow of \eqref{gKdV}. 
This is a delicate task because infinite dimensional measures become easily mutually singular. 
As ${\mathcal H}$ is preserved along the flow of \eqref{gKdV}, studying the transport of $ \chi_R ({\mathcal H}(u)) d \mu_k(u)$ is more natural, where $R>0$ is a fixed energy level and $\chi_R :\R\rightarrow\R$ is a continuous function vanishing outside $[-R,R]$. 
Such cut-offs were first introduced in the field of dispersive PDE's in \cite{LRS}. 
\\

For $M<\infty$,  verifying that the transport of the measure $ \chi_R ({\mathcal H}(\pi_M u)) d \mu_k(u)$
 by $\Phi_M(T)$ is absolutely continuous with respect to its initial value %
 will be relatively easy, with a Radon-Nikodym derivative given by 
\begin{equation}\label{FDFD}
g_{T, M}(u)=e^{\|\pi_M u\|_{H^k}^2-\|\pi_M (\Phi_M(T)u)\|_{H^k}^2  }\,.
\end{equation}
The main difficulty is passing to the limit $M\rightarrow\infty$ in $f_{T,M}$.
The issue is that each term in the exponential in \eqref{FDFD} strongly diverges in the limit $M\rightarrow\infty$ and therefore subtle cancellations should be exploited. 
A suitable adaptation of the modified energies introduced in Theorem~\ref{basicA} can be useful to overcome this difficulty,
more precisely we shall use the need the following result.

 \begin{theoreme}\label{basicB} 
Let $k>1$ be an integer. Then for every  $T>0$, $M\in \N\cup \{\infty\}$  there exist functionals
$$\bar {\mathcal E}_k:H^k\longrightarrow \R, \quad \bar {\mathcal G}_{k,M}^T:H^{(k-\frac 12)^-}\longrightarrow \R,$$
such that:

\begin{itemize}
\item for every $M\in \N\cup \{\infty\}$, $T>0$ and $\varphi\in H^k$ we have:
\begin{equation}
\label{timeder}
\bar {\mathcal E}_k(\pi_M(\Phi_M(T) \varphi)) - \bar {\mathcal E}_k(\pi_M \varphi)= \bar {\mathcal G}_{k,M}^T(\pi_M \varphi);
\end{equation}
\item the energy $\bar {\mathcal E}_k$ has the following structure:
\begin{equation}\label{rkubar}
\bar {\mathcal E}_k(u)= \|\partial_x^k u\|_{L^2}^2 + \bar{\mathcal R}_k(u),
\end{equation}
where $\bar {\mathcal R}_k: H^{(k-\frac 12)^-}\rightarrow \R$ satisfies for all $u\in H^{(k-\frac 12 )^-}$
the following bound:
\begin{equation}\label{RestimateB}
|\bar{\mathcal R}_k(u)|\leq C +C \|u \|_{H^{(k-\frac 12)^-}}^{(\frac{4k-8}{2k-3})^+} \|u\|_{H^1}^{p+\frac 2{2k-3}};
\end{equation}
\item
for every fixed $R>0$ and $T>0$, there exists a constant $C>0$, uniform w.r.t. to $M\in \N\cup \{\infty\}$, such that for all $\varphi\in H^{(k-\frac 12)^-}$ such that $\|\varphi\|_{H^1}<R$ we have:
\begin{equation}\label{Gestimate}
|\bar {\mathcal G}_{k,M}^T(\pi_M \varphi))|
\leq C +C 
\|\varphi \|_{H^{(k-\frac 12)^-}}^{(\frac{4k-8}{2k-3})^+};
\end{equation}
\item
for every $\varphi\in H^{(k-\frac12)^-}$and for every $T>0$ we have the following  convergence:
\begin{equation}\label{Gestimateprime}
\bar {\mathcal G}_{k,M}^T(\pi_M \varphi) \overset{M\rightarrow \infty}\longrightarrow \bar {\mathcal G}_{k,\infty}^T(\varphi).
\end{equation}
\end{itemize}
\end{theoreme}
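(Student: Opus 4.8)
The plan is to run, in the truncated setting, the very same modified-energy construction used for Theorem~\ref{basicA} in Section~\ref{N=infty}, but keeping careful track of the Fourier projectors $\pi_M$ and of every constant, so that the bounds become uniform in $M$ and survive the limit $M\to\infty$. Concretely, $\bar{\mathcal E}_k$ is taken to be exactly the $H^k$-based modified energy of Section~\ref{N=infty}, namely $\bar{\mathcal E}_k(u)=\|\partial_x^k u\|_{L^2}^2+\bar{\mathcal R}_k(u)$ with $\bar{\mathcal R}_k$ a finite sum of multilinear densities; note that it depends on neither $M$ nor $T$, consistently with the notation. The defect is then \emph{defined}, for each $M\in\N\cup\{\infty\}$, by the fundamental theorem of calculus along the flow of \eqref{gKdVtrunc},
$$\bar{\mathcal G}_{k,M}^T(\pi_M\varphi):=\int_0^T \frac{d}{dt}\Big[\bar{\mathcal E}_k\big(\pi_M(\Phi_M(t)\varphi)\big)\Big]\,dt,$$
so that \eqref{timeder} holds by construction (for $M=\infty$ this reduces to the relation behind Theorem~\ref{basicA}, since $\pi_\infty=\mathrm{Id}$ and $\Phi_\infty=\Phi$). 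The content of the theorem is therefore that this integrand can be written as a space-time multilinear expression with good mapping properties on $H^{(k-\frac12)^-}$, uniformly in $M$, and that it converges as $M\to\infty$.

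First I would compute $\frac{d}{dt}\|\partial_x^k u\|_{L^2}^2$ along \eqref{gKdVtrunc}: the Airy term $\partial_x^3 u$ contributes nothing after integration by parts, while the truncated nonlinearity $\pi_M\partial_x((\pi_M u)^{p+1})$ produces, after redistributing derivatives, a leading density of the schematic form $\int u^{p-1}(\partial_x u)(\partial_x^k u)^2\,dx$ (all factors frequency-truncated), which carries two $k$-th derivatives. I would then iterate the normal-form procedure of Section~\ref{N=infty}: at each step one adds to the energy a multilinear correction whose linear (Airy) time-derivative cancels the current top-order defect, thereby trading a factor carrying many derivatives for several factors each carrying fewer. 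Since $\pi_M$ is a Fourier multiplier, it commutes with $\partial_x$ and is self-adjoint on $L^2$, so every integration by parts and symmetrization used in the untruncated construction survives verbatim up to the insertion of projectors; this is precisely what makes $\bar{\mathcal E}_k$ independent of $M$ and the identity \eqref{timeder} exact. The iteration terminates once the remaining worst densities have their derivatives distributed over at least five factors, as in the discussion following Theorem~\ref{basicA}.

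Next come the estimates. The structural bound \eqref{rkubar}--\eqref{RestimateB} follows by estimating each multilinear summand of $\bar{\mathcal R}_k$ via Hölder together with fractional Leibniz/interpolation inequalities, placing the derivatives so that at most $H^{(k-\frac12)^-}$ regularity is ever demanded of a single factor while the remaining factors are absorbed into $\|u\|_{H^1}$; the bookkeeping yields the exponent $\big(\tfrac{4k-8}{2k-3}\big)^+=\big(2-\tfrac{2}{2k-3}\big)^+$. For the defect bound \eqref{Gestimate}, I would integrate the multilinear integrand in time and invoke Bourgain's $L^6$ Strichartz inequality for the linear KdV flow, as in Section~\ref{N=infty}: because the worst densities spread their derivatives over $\geq 5$ factors, each high-frequency factor can be placed in a Strichartz space, and the derivative budget is converted into the stated power of the $H^{(k-\frac12)^-}$ norm, the hypothesis $\|\varphi\|_{H^1}<R$ controlling the remaining low-order factors. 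Crucially, neither the Strichartz estimate nor the multilinear Hölder bounds feel the truncation beyond the $L^2$-boundedness of $\pi_M$, so all constants are uniform in $M\in\N\cup\{\infty\}$.

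The remaining, and most delicate, point is the convergence \eqref{Gestimateprime} for fixed $\varphi\in H^{(k-\frac12)^-}$. I would deduce it from the stability of the truncated flows, namely $\Phi_M(t)\varphi\to\Phi(t)\varphi$ in $H^{(k-\frac12)^-}$ uniformly on $[0,T]$ (available from the well-posedness theory quoted for \eqref{gKdVtrunc}), combined with $\pi_M\to\mathrm{Id}$ strongly and the continuity of each multilinear integrand in the $H^{(k-\frac12)^-}$ topology; the uniform-in-$M$ bound \eqref{Gestimate} then licenses passage to the limit under the time integral by dominated convergence. The main obstacle—and the whole reason the derivative-redistribution of Section~\ref{N=infty} is indispensable—is exactly that the multilinear forms must remain bounded and continuous on the low-regularity space $H^{(k-\frac12)^-}$ on which $\mu_k$ is supported: a naive modified energy, with up to $k$ derivatives concentrated on one or two factors, would satisfy neither \eqref{RestimateB}/\eqref{Gestimate} at that regularity nor converge as $M\to\infty$. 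Securing this half-derivative gain uniformly in $M$, so that the exponents drop from the $H^k$-exponents of Theorem~\ref{basicA} to the $H^{(k-\frac12)^-}$-exponents recorded here, is the crux of the argument.
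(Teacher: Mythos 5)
Your overall strategy coincides with the paper's: same energy $\bar{\mathcal E}_k={\mathcal E}_k$, defect defined by integrating $\frac{d}{dt}\bar{\mathcal E}_k(\pi_M\Phi_M(t)\varphi)$ in time, interpolation redone at regularity $H^{(k-\frac12)^-}$ to get the exponent $(\frac{4k-8}{2k-3})^+$, the uniform-in-$M$ $L^6$ Strichartz bounds of Proposition \ref{Cauchy}, and convergence via multilinearity plus the flow convergence of Section \ref{L6}. However, there is a genuine gap at the point where you assert that the untruncated construction ``survives verbatim up to the insertion of projectors'' and that uniformity in $M$ only requires the $L^2$-boundedness of $\pi_M$. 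The time derivative of the correction $\bar{\mathcal R}_k(v_M)$ along \eqref{gKdVtrunc} replaces $\partial_t v_M$ by $\pi_M\partial_x(v_M^{p+1})$, producing densities ${\mathcal I}^{**}_{j_0,j_1,j_2,M}$ in which $\pi_M$ sits \emph{inside} a product, applied to the block $\partial_x^{j_0+1}(v_M^{p+1})$. This blocks the reduction, by Leibniz and integration by parts, to densities in $\Theta_k$: you cannot carry the expansion of $\partial_x^{k}(v_M^{p+1})$ past the projector. In particular, for $(j_0,j_1,j_2)=(k-1,0,0)$ the term $\int \partial_x^{k}\pi_M(v_M^{p+1})\,\partial_x^{k-1}v_M\,v_M^{p}$ concentrates $2k-1$ derivatives on two blocks and, taken at face value, is too singular for \eqref{Gestimate} at regularity $H^{(k-\frac12)^-}$. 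The paper rescues it by writing $(p+1)\partial_x^{k-1}u\,u^{p}=\partial_x^{k-1}(u^{p+1})+\text{l.o.t.}$ and exploiting that $\pi_M$ is a projector, so that the dangerous piece becomes $\int \partial_x^{k}\pi_M(v_M^{p+1})\,\partial_x^{k-1}\pi_M(v_M^{p+1})$, the integral of an exact derivative, hence zero. Without this cancellation your uniform bound \eqref{Gestimate} does not follow from the scheme you describe.

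A second, related omission: once the projector is wedged between two groups of factors, H\"older cannot be applied freely across it. The paper must distinguish how the four highest-derivative factors are split between the projected and unprojected groups (a case analysis on $J\in\{0,1,2,3,4\}$), and it uses the boundedness of $\pi_M$ on $L^{q}$ for $q\in(1,\infty)$ (equivalently, uniform bounds for the Dirichlet projector beyond $L^2$) together with the algebra property of $H^1$ to peel the projector off the product before interpolating. Likewise, for \eqref{Gestimateprime} the residual terms carrying $\pi_{>M}$ in front of a product are not handled by dominated convergence alone: the paper decomposes each factor as $\pi_{cM}+\pi_{>cM}$ and uses that a nonzero contribution forces at least one factor to live at frequency $\gtrsim M$, converting the $\varepsilon$ of room in the $L^6W^{s,6}$ estimates into decay in $M$. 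These are the steps your proposal would need to supply; the remainder of your plan matches the paper.
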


{ 
Let us compare Theorems \ref{basicA} and \ref{basicB}: we point out that both energies ${\mathcal E}_k$ and
$\bar {\mathcal E}_k$ coincide; however we elected to use different notations, as
the corresponding lower order part of ${\mathcal E}_{k}$, namely ${\mathcal R}_k$, introduced along Theorem \ref{basicA},  is defined on $H^k$, while $\bar {\mathcal R}_k$ introduced in Theorem \ref{basicB} is defined on the larger space $H^{(k-\frac 12)^-}$. It will be clear along the proof that
${\mathcal R}_k(u)=\bar {\mathcal R}_k(u)$ for $u\in H^k$ but $\bar{\mathcal R}_k$ to be defined on $H^{(k-\frac 12)^-}$ is crucial for our purpose: when dealing with the analysis of transported Gaussian measures $\mu_k$,  working  at regularity $H^k$ is not sufficient and  one needs to go below at lower regularity $H^{(k-\frac 12)^-}$, as $\mu_k(H^{(k-\frac 12)^-})=1$ and 
$\mu_k(H^k)=0$. The same comment applies to functionals
$ {\mathcal G}_{k}^T$ and $\bar {\mathcal G}_{k,\infty}^T$ appearing in Theorems \ref{basicA} and \ref{basicB}. While they coincide on $H^k$, we consider the second one as defined on the larger space
$H^{(k-\frac 12)^-}$. Estimates \eqref{RestimateB} and \eqref{Gestimate} will be crucial in the sequel and should be compared with \eqref{RestimateA} and \eqref{GestimateA}. Notice how  the power of the $H^1$ norm differs in \eqref{RestimateA} and \eqref{RestimateB}, while, in \eqref{Gestimate} we loose more derivatives but we gain a smaller power compared to \eqref{GestimateA}: in the r.h.s. of \eqref{RestimateB} and \eqref{Gestimate} we have the $H^{(k-\frac 12)^-}$ norm of the initial datum to a power
that is less than two, and this is crucial for using  some standard Gaussian analysis. 
This is the key to proving, beyond quasi-invariance, $L^q$ regularity for the density of the transported Gaussian measure (see Theorem~\ref{quasiinvariance}).\

Let us also comment on the introduction, in Theorem \ref{basicB}, unlike in Theorem \ref{basicA}, of a family of functionals
$\bar {\mathcal G}_{k,M}^T$ for $M\in \N$. In order to study properties of the transported Gaussian measures
along the infinite dimensional flow associated with \eqref{gKdV}, we first need to analyze the variation of the energy $\bar {\mathcal E}_k$ along the flow
$\Phi_M(t)$ associated with \eqref{gKdVtrunc} (see \eqref{timeder}). The functionals $\bar {\mathcal G}_{k,M}^T$ turn out to be strongly related to $\bar {\mathcal G}_{k,\infty}^T$ (and hence to the functional ${\mathcal G}^T_{k}$ in Theorem \ref{basicA}). Actually they do look alike except for the Dirichlet projector $\pi_M$  that appears in $\bar {\mathcal G}_{k,M}^T$ for $M\in \N$. However the nice properties enjoyed  by projectors $\pi_M$ allow to estimate $\bar {\mathcal G}_{k,M}^T$ uniformly w.r.t. $M$.
\\

As Theorem \ref{basicA}, Theorem~\ref{basicB}  relies similarly on a key improvement on \cite{PTV}.
The energies that we introduce along Theorem \ref{basicB} allow to improve on the power that we get on the r.h.s. in \eqref{RestimateB} and \eqref{Gestimate} when compared to what  we would get by simply adapting the construction used in \cite{PTV} in the NLS context.\\

We can now give  the precise statement of our quasi-invariance result for gKdV. }
\begin{theoreme}\label{quasiinvariance} 
Let $k>1$ be an integer. The Gaussian measure $\mu_k$ is quasi invariant by the flow $\Phi(T)$ for every $T>0$. Moreover,  for every fixed $R>0$ 
and $T>0$ we have, for $A$ a Borel subset of $H^{(k-\frac 12)^-}$,
$$
\int_{\Phi(T)A} \chi_R({\mathcal H}(u)) d\mu_k(u)=\int_A  g_T( u) \chi_R ({\mathcal H}(u)) d \mu_k(u),
$$
where $g_T(u) \chi_R({\mathcal H}(u)) \in L^q(\mu_k)$ with $q\in [1,\infty)$ for $k>2$ and $q=\infty$ for $k=2$. In addition,  we have
$$
g_T(u)=\exp\Big(-\bar {\mathcal R}_k (u)+\bar {\mathcal R}_k (\Phi(T)u)-\bar {\mathcal G}_{k,\infty}^T (u)\Big)\, ,
$$ 
where $\bar {\mathcal R}_k$ and $\bar {\mathcal G}_{k,\infty}^T$, as introduced in  Theorem~\ref{basicB}, are well-defined quantities on the support of $\mu_k$. 
More precisely, with $g_{T,M}$ defined in \eqref{FDFD}, we have
$$
\lim_{M\rightarrow\infty}\|g_{T,M}(u)\chi_R ({\mathcal H}(\pi_M u))-g_T(u)\chi_R({\mathcal H}(u)) \|_{L^q(d\mu_k(u))}=0\,.
$$
\end{theoreme}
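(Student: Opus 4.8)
The plan is to prove the statement first at finite truncation $M$, where everything is elementary, and then to pass to the limit $M\to\infty$ using the modified energies of Theorem~\ref{basicB}. For fixed $M\in\N$ the flow $\Phi_M(T)$ decouples into a finite–dimensional Hamiltonian system on the modes $|n|\le M$ and the free Airy evolution on the modes $|n|>M$. The former preserves Lebesgue measure by Liouville's theorem, the vector field $\pi_M\partial_x((\pi_M u)^{p+1})$ being divergence free, while the latter is a rotation in each Fourier mode; both also leave the Gaussian structure of $\mu_k$ on the high modes invariant. Since the low–mode marginal of $\mu_k$ has, with respect to Lebesgue, the explicit density $\propto e^{-\|\pi_M u\|_{H^k}^2}$, and since $\mathcal H(\pi_M u)$ and $\|\pi_M u\|_{L^2}$ are conserved along $\Phi_M(T)$, the substitution $u=\Phi_M(T)w$ yields, for every Borel set $A$,
\[\int_{\Phi_M(T)A}\chi_R(\mathcal H(\pi_M u))\,d\mu_k(u)=\int_A g_{T,M}(w)\,\chi_R(\mathcal H(\pi_M w))\,d\mu_k(w),\]
with $g_{T,M}$ exactly as in \eqref{FDFD}. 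This is the finite–dimensional core of the argument.

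The next step is to rewrite the exponent of $g_{T,M}$ into a quantity that remains finite on the support of $\mu_k$, where $\|\pi_M u\|_{H^k}\to\infty$ as $M\to\infty$. Using conservation of $\|\pi_M u\|_{L^2}$ together with \eqref{rkubar} and \eqref{timeder}, the divergent top–order derivative part cancels and one is left with
\[g_{T,M}(u)=\exp\Big(-\bar{\mathcal R}_k(\pi_M u)+\bar{\mathcal R}_k(\pi_M\Phi_M(T)u)-\bar{\mathcal G}_{k,M}^T(\pi_M u)\Big),\]
the remaining lower–order quadratic pieces being $\mu_k$–a.s.\ convergent since $\mu_k(H^{(k-\frac12)^-})=1$. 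By Theorem~\ref{basicB}, each of $\bar{\mathcal R}_k$ and $\bar{\mathcal G}_{k,M}^T$ is well defined on $H^{(k-\frac12)^-}$, that is, on the support of $\mu_k$; this is precisely why $\bar{\mathcal R}_k$ was constructed on this larger space rather than on $H^k$.

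The heart of the proof is the passage to the limit, which I would carry out in two coupled steps. For the pointwise ($\mu_k$–a.e.) convergence $g_{T,M}\to g_T$ I would combine \eqref{Gestimateprime}, the continuity of $\bar{\mathcal R}_k$ on $H^{(k-\frac12)^-}$, the convergence $\pi_M u\to u$ in that space, and the stability $\Phi_M(T)u\to\Phi(T)u$ in $H^{(k-\frac12)^-}$ furnished by Proposition~\ref{Cauchy}; the last point is where the cut–off $\chi_R(\mathcal H)$ is essential, since in the defocusing case $\mathcal H\ge\frac12\|\partial_x u\|_{L^2}^2$ confines the data to a set of bounded $H^1$ norm on which the flows are controlled uniformly in $M$. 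For the uniform integrability I would invoke \eqref{RestimateB} and \eqref{Gestimate}: on the support of $\chi_R(\mathcal H)$ the $H^1$ norm is bounded, so the exponent of $g_{T,M}$ is dominated by $C+C\|u\|_{H^{(k-\frac12)^-}}^{\theta}$ with $\theta=(\frac{4k-8}{2k-3})^+$, which is strictly below $2$ for every $k>1$. Fernique's theorem then gives $\int e^{qC\|u\|_{H^{(k-\frac12)^-}}^{\theta}}\,d\mu_k<\infty$ for every $q<\infty$, uniformly in $M$, hence a uniform $L^q(\mu_k)$ bound on $g_{T,M}\chi_R$ for all $q<\infty$ when $k>2$; for $k=2$ one has $\theta=0^+$, the exponent is bounded, and one obtains a uniform $L^\infty$ bound.

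Finally I would upgrade a.e.\ convergence to $L^q$ convergence: the uniform bound in some $L^{q'}$ with $q'>q$ provides uniform integrability, so Vitali's theorem converts the $\mu_k$–a.e.\ convergence into convergence in $L^q(\mu_k)$ for every $q<\infty$ when $k>2$, while for $k=2$ the uniform $L^\infty$ control together with the convergence of the (bounded) exponents yields the stated $L^\infty$ conclusion. Passing to the limit on both sides of the finite–$M$ identity then produces the claimed change of variables formula with density $g_T\,\chi_R(\mathcal H)\in L^q(\mu_k)$, and quasi–invariance of $\mu_k$ under $\Phi(T)$ follows on each energy shell $\{\mathcal H\le R\}$; letting $R\to\infty$, so that $\chi_R(\mathcal H)\uparrow 1$ using that $\mathcal H$ is $\mu_k$–a.s.\ finite, removes the cut–off and gives global quasi–invariance. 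The main obstacle is the simultaneous control required in the limit: the individual terms in \eqref{FDFD} diverge, so one must at once exploit the delicate cancellations encoded by the modified energy of Theorem~\ref{basicB} and secure convergence together with integrability uniformly in $M$ at the rough regularity $H^{(k-\frac12)^-}$, which is feasible only because the sub–quadratic powers in \eqref{RestimateB} and \eqref{Gestimate} fall within the reach of Gaussian integrability.
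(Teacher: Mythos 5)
Your outline reproduces the paper's proof in all of its essential steps: the finite-dimensional change of variables via Liouville's theorem and the decomposition \eqref{decompmuk}, giving the identity with density $g_{T,M}$ from \eqref{FDFD}; the rewriting of the divergent exponent through $\bar{\mathcal E}_k$ and \eqref{timeder} so that only $\bar{\mathcal R}_k$ and $\bar{\mathcal G}_{k,M}^T$ survive; the $\mu_k$-a.e.\ convergence from \eqref{Gestimateprime} together with continuity of $\bar{\mathcal R}_k$ on $H^{(k-\frac 12)^-}$; and the uniform $L^q$ bound obtained by combining the cut-off $\chi_R(\mathcal H)$ (which confines the data to a bounded set in $H^1$), the uniform bound \eqref{carb}, the sub-quadratic estimates \eqref{RestimateB} and \eqref{Gestimate}, and Gaussian integrability. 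Whether one upgrades a.e.\ convergence to $L^q$ convergence via Vitali or, as the paper does, via Egoroff is immaterial.

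The one genuine gap is at the final step, where you write that ``passing to the limit on both sides of the finite-$M$ identity then produces the claimed change of variables formula.'' The left-hand side of that identity is $\rho_{k,M}(\Phi_M(T)A)$, the measure of a set that itself depends on $M$, and its convergence to $\rho_k(\Phi(T)A)$ does not follow from the convergence of the densities alone. The paper devotes the last part of Section \ref{sectqi} precisely to this point: it reduces to compact $A$, then proves the two inequalities $\rho_k(\Phi(T)A)\le\int_A f_T\,d\mu_k$ and $\rho_k(\Phi(T)A)\ge\int_A f_T\,d\mu_k$ separately, using the approximate inclusions $\Phi(T)A\subset\Phi_M(T)\big(A+B_\varepsilon^{(k-\frac 12)^-}\big)$ and $\Phi_M(T)A\subset\Phi(T)A+B_\varepsilon^{(k-\frac 12)^-}$ valid for all $M$ large enough (consequences of Proposition~\ref{Cauchy} and the arguments of \cite{sigma}), followed by letting $M\to\infty$ and then $\varepsilon\to 0$, exploiting compactness of $A$ and the $L^q$ convergence of $f_{T,M}$ on the slightly enlarged sets. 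Without this sandwich argument, or some equivalent quantitative control of how $\Phi_M(T)A$ approximates $\Phi(T)A$ in $H^{(k-\frac 12)^-}$, the limit on the left-hand side is not justified, and this is the only place where your proposal falls short of a complete proof.
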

We point out that with such quasi invariance we also get $L^q$ regularity for the Radon-Nykodim density.
Exactly as for Theorem \ref{growth}, the result in Theorem \ref{quasiinvariance} can be extended to the case $T<0$ by using the reversibility of the flow
associated with \eqref{gKdV}. However for simplicity we shall focus on the case $T>0$.
\\

Theorem~\ref{quasiinvariance} fits in the line of research aiming to describe macroscopical (statistical dynamics) properties of Hamiltonian PDE's. {  In particular, it implies a stability property of the corresponding infinite dimensional Liouville equation (see \cite{STz}, Corollary 1.3)}. The earliest references we are aware of is \cite{F}, followed by \cite{LRS, Zhidkov,B3,B4}. Inspired by the work on invariant measures for the Benjamin-Ono equation  \cite{DTV,TV13a,TV13b,TV14}, quasi-invariance of Gaussian measure for several dispersive models was obtained in recent years, see \cite{deb, gauge, GLV, FS,forl, GOTW,OS,OT1,OT2,OT3,OTT,PTV,phil,sigma}.
The method to identify the densities in Theorem~\ref{quasiinvariance} is inspired by recent works \cite{deb,GLT_21}. In Theorem~\ref{quasiinvariance}, we provide  much more information on the densities when compared to \cite{PTV}, which used modified energies on the nonlinear Schr\"odinger equation. It should be underlined that a key novelty in the proof of   Theorem~\ref{quasiinvariance} with respect to \cite{GLT_21} and \cite{PTV} is that we crucially use dispersive estimates in the analysis.
\\

{  We hope that the approach developed here may allow to identify the Radom-Nikodym derivatives in the quasi-invariance obtained in \cite{PTV}, up to additional difficulties related to complex valued functions in the context of NLS.}
\\

Let us finally comment on the case $k=1$. 
In this case the Gibbs measure which is absolutely continuous with respect to $\mu_1$ is an invariant measure. This is precisely the result obtained in \cite{richards} in the case $p=3$
and \cite{ChKi}  in the general case $p\geq 3$ (see also \cite{ORT} for weak solutions in the case $p>3$).\\

\subsection{An informal conclusion}
The results of this paper and previous works of the second and third authors \cite{DTV,TV13a,TV13b,TV14} can be summarized as follows. 
In the case of integrable models, exact  conservation laws for all Sobolev regularities imply existence of invariant measures; the modified energies we construct in the context of non integrable models imply existence of quasi-invariant measures. Concerning the deterministic behavior of the solutions, exact  conservation laws imply uniform bounds on Sobolev norms of solutions while the modified energies we construct imply polynomial bounds on Sobolev norms of solutions. 
\subsection{Organization of the paper}
In Section~\ref{L6} we get dispersive estimates that play a crucial role in the analysis.  For completeness, the proof of the key nonlinear estimate is presented in the Appendix. In Section \ref{N=infty} we prove Theorem~\ref{basicA} and in Section \ref{sectqi} we prove Theorem~\ref{basicB}.  Section \ref{sectqi} is devoted to the proof of Theorem~\ref{quasiinvariance}. 
\vskip 0.3cm

{\bf Acknowledgement.} The third author is grateful to Yvan Martel for pointing out the reference \cite{MartelAJM} and for interesting discussions about gKdV. The authors also thank referees for many helpful comments and remarks that led to improve the exposition.

\section{Dispersive estimates }\label{L6}

The aim of this section is to collect useful results on the flows associated with 
\eqref{gKdV} and \eqref{gKdVtrunc}. Firstly, global existence and uniqueness of solutions
for the truncated flows follow by a straightforward O.D.E. argument, along with conservation of $L^2$ mass. From now on we assume without further comment 
 existence and uniqueness of global flows $\Phi_M(t)$ for $M\in \N$.
The Cauchy problem associated with \eqref{gKdV} is much more involved. In particular we quote \cite{B1,B2,CKSTT,Staf1} whose analysis implies that for every $s\geq 1$ there exists a unique global solution associated with the initial datum $\varphi\in H^s$; moreover we have continuous dependence on the initial datum. The analysis in \cite{CKSTT} allows to treat local Cauchy theory down to low regularity $H^\frac 12$.
 \\
 
 It will later be important to have a series of uniform bounds with respect to $M$ (in particular suitable $L^6$ bounds), as well as some delicate convergences in suitable topologies of the finite dimensional flows 
to the infinite dimensional one. To the best of our knowledge, those properties do not follow in a straightforward way from the aforementioned works and their proofs require some further arguments. Indeed in our analysis we shall borrow many ideas from references above (in particular \cite{CKSTT}), that in conjunction with new ingredients will imply
several properties for the flows $\Phi_M(t)$ with $M\in \N\cup\{\infty\}$.
\\

In order to provide a precise statement we first define our resolution space.
First recall $X^{s,b}$ spaces, introduced in the fundamental work \cite{B1}.  
For real numbers $s,b$ and a function $u$ on $\R\times\T$, we define the $X^{s,b}$ norm associated with the KdV dispersive relation by
$$
\|u\|_{X^{s,b}}=\|\langle n\rangle^s\langle \tau+n^3\rangle^b \hat{u}(\tau,n)\|_{L^2_{n,\tau}},
$$
where $\hat{u}(\tau,n)$, $\tau\in\R$, $n\in\Z$  is the space-time Fourier transform of $u$.
For $T>0$, we denote $X^{s,b}_{T}$ the restriction space of function on $(-T,T)\times \T$ equipped with the norm 
$$
\|u\|_{X^{s,b}_T}=\inf_{\substack{\tilde u\in X^{s,b}\\\tilde{u}|_{(-T,T)}=u}}  \|\tilde{u}\|_{X^{s,b}}.
$$
In our analysis one needs to take $b=1/2$ and we are led to work in the space $Y^s$ equipped with the norm 
$$
\|u\|_{Y^s}=\|u\|_{X^{s,\frac{1}{2}}}+\|\langle n\rangle^s \hat{u}(\tau,n)\|_{l^2_n L^1_\tau}\,.
$$
One can introduce as above the restriction spaces $Y^s_T$. We recall the embedding
$Y^s_T\subset {\mathcal C}([0,T];H^s)$.

\begin{proposition}\label{Cauchy}
Let $s\geq 1$ and $T>0$ then
\begin{equation}
  \label{exist}
  \forall \varphi \in H^s\,, \quad \exists!\,\, \Phi(t)\varphi\in Y_T^s \text{ solution to } \,\,\, \eqref{gKdV}
\end{equation}
and we  have convergence of the truncated flow, for all compact ${\mathcal K}\subset H^s$,
\begin{equation}\label{vaffpao} 
\sup_{\varphi\in {\mathcal K}} \|\pi_M (\Phi_M(t)\varphi)-\Phi (t)\varphi\|_{L^\infty([0,T];H^s) }\overset{M\rightarrow \infty} \longrightarrow 0\,.
\end{equation}
For every $\varepsilon>0$ and $R>0$ there exists $C>0$ independent of $M\in \N\cup\{\infty\}$ such that:
\begin{align}\label{carb}
\forall \varphi\in H^{s} &\hbox{ s.t. } \|\varphi\|_{H^1}<R\,,\quad \|\pi_M \Phi_M(t) \varphi\|_{H^s}  \leq C\|\varphi\|_{H^s}\,,\\
  \label{unifL6bOU}
 \forall \varphi\in H^{s+\varepsilon} &\hbox{ s.t. } \|\varphi\|_{H^1}<R\,,\quad \|\pi_M (\Phi_M(t)\varphi)\|_{L^6((0,T);W^{s,6})}  \leq C  \|\varphi\|_{H^{s+\epsilon}}\,,\\
  \label{vaffpa}
  \forall \varphi\in H^{s+\varepsilon}&\,,\quad \|\pi_M (\Phi_M(t)\varphi)-\Phi (t)\varphi\|_{L^6((0,T);W^{s,6})}
 \overset{M\rightarrow \infty} \longrightarrow  0\,.
\end{align}
\end{proposition}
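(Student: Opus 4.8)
The plan is to realize $\Phi_M(t)\varphi$, for $M\in\N\cup\{\infty\}$ (with $\pi_\infty=\mathrm{Id}$), as the fixed point in the Bourgain space $Y_T^s$ of the Duhamel map
$$u\longmapsto e^{t\partial_x^3}\varphi-\int_0^t e^{(t-t')\partial_x^3}\,\pi_M\partial_x\big((\pi_M u)^{p+1}\big)(t')\,dt',$$
and then to promote local existence to the global bounds \eqref{carb}--\eqref{vaffpa} by exploiting conservation of the Hamiltonian $\mathcal H$. Everything rests on the key multilinear estimate proved in the Appendix, which in its $Y^s$/dual form controls $\partial_x$ of a product of $p+1$ factors by a sum of terms placing the top $s$ derivatives on a single factor, measured in $Y^s$, with the remaining $p$ factors measured only in $Y^1$, and with a small positive power $T^\theta$ gained from time localization. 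Since $\pi_M$ is bounded, uniformly in $M$, on $X^{s,b}$, on $Y^s$ and on $L^6((0,T);W^{s,6})$, all the estimates below are uniform in $M$.

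Granting this estimate, I would first close the contraction at regularity $s=1$. As $s=1$ is subcritical for \eqref{gKdV} (the scaling-critical index is $\tfrac12-\tfrac2p<\tfrac12$), the existence time $T_0$ depends only on $\|\varphi\|_{H^1}$, and the fixed point satisfies $\|u\|_{Y_{T_0}^1}\le C(\|\varphi\|_{H^1})$; this gives \eqref{exist} and uniqueness at $s=1$. For general $s\ge1$ the one-high-frequency structure of the multilinear estimate yields $\|u\|_{Y_T^s}\le \|\varphi\|_{H^s}+CT^\theta\|u\|_{Y_T^s}\|u\|_{Y_T^1}^p$; since $\|u\|_{Y_T^1}$ is already controlled by $R$, shrinking $T\le T_0(R)$ turns this into a contraction and produces $\|u\|_{Y_T^s}\le 2\|\varphi\|_{H^s}$ on the same $H^1$-determined interval, which is the persistence of $H^s$ regularity with constant depending only on $R$.

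To globalize, and thereby get \eqref{carb}, I would use that on frequencies $|n|\le M$ the flow $\Phi_M$ is a finite-dimensional Hamiltonian system conserving both $\|\pi_M u\|_{L^2}$ and $\mathcal H(\pi_M u)$, while the frequencies $|n|>M$ evolve linearly; by the defocusing sign of $\mathcal H$ this bounds $\sup_t\|\pi_M\Phi_M(t)\varphi\|_{H^1}$ by a constant depending only on $R$, uniformly in $M$ (and likewise for $\Phi_\infty$ via exact conservation of $\mathcal H$). Thus the $H^1$-determined time $T_0(R)$ can be iterated $O(T/T_0)$ times to span $[0,T]$, each step multiplying the $H^s$ norm by a fixed factor, giving \eqref{carb}. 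Running the identical scheme at regularity $s+\varepsilon$ and inserting Bourgain's $L^6$ Strichartz inequality $\|\langle\partial_x\rangle^s u\|_{L^6_{t,x}}\lesssim \|u\|_{X^{s^+,1/2}}\lesssim\|u\|_{Y^{s+\varepsilon}}$, summed over the $O(T/T_0)$ subintervals, yields \eqref{unifL6bOU}; the $\varepsilon$ loss is precisely the derivative loss in the periodic $L^6$ estimate.

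Finally, for the convergences I would control the difference $w_M=\pi_M\Phi_M(t)\varphi-\Phi(t)\varphi$, which has initial data $(\pi_M-\mathrm{Id})\varphi$ and solves a Duhamel equation whose forcing splits as $\partial_x\big[(\pi_M\Phi_M\varphi)^{p+1}-(\Phi\varphi)^{p+1}\big]+(\pi_M-\mathrm{Id})\partial_x\big((\pi_M\Phi_M\varphi)^{p+1}\big)$. The first bracket telescopes into terms each carrying a factor of $w_M$, hence is absorbed by the contraction on a small subinterval, while the data and the second forcing term are $(\mathrm{Id}-\pi_M)$ applied to quantities lying in bounded subsets of $H^s$, respectively $Y^s_T$, on which $\pi_M\to\mathrm{Id}$ strongly, so they tend to $0$ in the relevant norms as $M\to\infty$. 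Propagating the difference estimate over the finitely many subintervals gives the $Y^s_T$ convergence, and uniformity over a compact $\mathcal K\subset H^s$ follows by covering $\mathcal K$ with a finite net and invoking the uniform bounds above, which is \eqref{vaffpao}; the same difference bound run at $s+\varepsilon$ combined with the $L^6$ Strichartz embedding gives \eqref{vaffpa}. The principal obstacle throughout is the endpoint $b=\tfrac12$: the nonlinearity does not close in $X^{s,1/2}$ alone, which is exactly why the auxiliary $\ell^2_nL^1_\tau$ norm is built into $Y^s$ and why the time gain $T^\theta$ must be extracted with care in the Appendix, all while keeping every constant independent of $M$.
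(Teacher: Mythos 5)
There is a genuine gap at the very first step: you set up the contraction directly for the Duhamel map of the ungauged equation, with nonlinearity $\pi_M\partial_x\big((\pi_M u)^{p+1}\big)$, and assert that the Appendix's multilinear estimate controls it. It does not. The estimate \eqref{tame} proved in the Appendix is for the \emph{gauged} nonlinearity $\pi_M\Pi\big(\partial_x w\,\Pi w^p\big)$, and the projector $\Pi$ onto nonzero frequencies is essential: in the bilinear reduction the smoothing comes from the resonance identity $n^3-n_1^3-(n-n_1)^3=3nn_1(n-n_1)$, which degenerates exactly when $n=0$, $n_1=0$ or $n=n_1$. The case $n=n_1$ corresponds to the factor $u^p$ contributing its zero Fourier mode, i.e.\ to the resonant transport term $(p+1)\big(\int_\T u^p\,dx\big)\partial_x u$ hidden inside $\partial_x(u^{p+1})$; for that term there is no gain of derivative and the required estimate in $X^{s,b}/Y^s$ is false, so your fixed point does not close. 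This is precisely why the paper (following \cite{CKSTT}) first performs the gauge transform $v_M(t,x)=u_M\big(t,x+(p+1)\int_0^t\int_\T u_M^p\big)$, which removes the resonant transport term and produces the nonlinearity $\pi_M\Pi(\partial_x v_M\,\Pi v_M^p)$ to which \eqref{tame} applies.

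Once the gauge transform is in place, everything else you propose (closing at $s=1$ with time depending only on the $H^1$ norm, persistence of $H^s$ via the one-high-frequency structure, globalization through conservation of the truncated Hamiltonian, the $L^6$ Strichartz bound with $\varepsilon$ loss, difference estimates plus a finite net for the convergences) matches the paper's Propositions \ref{Cauchygauge} and \ref{Cauchy_bisG}. But your proposal is then missing the final transfer step: the relation between $\Phi_M$ and $\Phi_M^{\mathcal G}$ is a time-dependent spatial translation by $\tau_M^\varphi(t)$, so deducing \eqref{vaffpao} requires showing that translations by $M$-dependent, time-dependent shifts converging uniformly still give convergence in ${\mathcal C}([0,T];H^s)$ uniformly over a compact set; this is the content of Lemma \ref{trans}, which your argument has no counterpart for. (The norms in \eqref{carb}--\eqref{vaffpa} are translation invariant, so those transfer immediately, but \eqref{vaffpao} does not.)
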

We remark that \eqref{vaffpao} (with $\mathcal K=\{\varphi\}$) and \eqref{unifL6bOU} imply \eqref{vaffpa}: using the interpolation inequality, for $\delta>0$,
\begin{equation*}
  \|u\|_{W^{s,6}}\leq C \|u\|_{W^{s+\delta,6}}^{\frac s{s+\delta}} \|u\|_{L^{6}}^{\frac \delta{s+\delta}}\,,
\end{equation*}
we get, by time integration, H\"older inequality in time and Sobolev embedding $H^s\subset L^6$:
\begin{equation}
  \label{GNimpY}
  \|u\|_{L^6 (0,T);W^{s,6})}\leq C \|u\|_{L^6((0,T);W^{s+\delta,6})}^{\frac s{s+\delta}} \|u\|_{L^{\infty}(0,T);H^s)}^{\frac \delta{s+\delta}}\,.
\end{equation}
Next one can choose $u=\pi_M (\Phi_M(t)\varphi)-\Phi (t)\varphi$ and we get \eqref{vaffpa} since the second term on the r.h.s. in \eqref{GNimpY} converges to zero by \eqref{vaffpao} and the 
first term on the r.h.s. is bounded provided we choose $\delta=\frac\varepsilon 2$ and apply \eqref{unifL6bOU}, replacing $s$ by $s+\frac \varepsilon 2$
(there is room to play with $\varepsilon>0$).
\\

Hence we are reduced to proving  \eqref{exist}, \eqref{vaffpao}, \eqref{carb} and \eqref{unifL6bOU}.
The main idea is to perform a gauge transform on gKdV, work on the gauged equation and 
then transfer results back to the original flow. To prove \eqref{vaffpao} it will be of importance to have lemma about continuity of time-dependent translations  for time-dependent functions
(see Lemma~\ref{trans} below).
\subsection{The gauged gKdV equation}
We now present the gauge transform following \cite{CKSTT}.
Set $u_M(t,x)=\pi_M (\Phi_M(t)\varphi) $
and introduce a change of unknown,
$$
v_M(t,x)=u_M\big(t,x+(p+1)\int_{0}^t \int_\T u_M^p dxdt\big)\,,
$$
taming the derivative loss in the nonlinearity. By invariance of the Lebesgue norm by translation,
\begin{equation}\label{utov}
u_M(t,x)=v_M\big(t,x-(p+1)\int_{0}^t \int_\T v_M^p dxdt\big).
\end{equation}
Therefore, $v_M$ is a solution to
$$\begin{cases}
\partial_t v_M - \partial_{x}^3 v_M + \pi_M \partial_x (v_M^{p+1})-(p+1)(\int v_M^p dx) \partial_x v_M= 0, \\
v_M(0,x)=\pi_M \varphi(x).
\end{cases}$$ Let $\Pi$ be the orthogonal projector on the non zero frequencies defined by $\Pi f(x)=f(x)-\int_\T f dx$ for $x\in\T$, and observe that $\pi_M v_M=v_M$. Therefore we can write 
\begin{align*}
\pi_M \partial_x (v_M^{p+1})-(p+1)(\int_\T v_M^p dx) \partial_x v_M
 = &
(p+1) \pi_M ( v_M^p \partial_x v_M ) -(p+1)(\int_\T v_M^p dx) \pi_M \partial_x v_M\\
= & (p+1)\pi_M (   \partial_x v_M \Pi v_M^p )\,.
\end{align*}
Next notice that since $\Pi (\partial_x v_M)=\partial_x v_M$, we can write
\begin{equation*}
\int_\T \partial_x v_M\Pi v_M^p dx=\int_\T  v_M^p  \Pi\partial_x v_M dx=\int_\T v_M^p \partial_x v_M dx=\frac{1}{p+1}\int_\T\partial_x (v_M^{p+1}) dx=0
\end{equation*}
therefore we have $(p+1)\pi_M(\partial_x v_M \Pi v_M^p )=(p+1)\pi_M \Pi (\partial_x v_M \Pi v_M^p)$ and the equation for $v_M$ writes
\begin{equation}\label{vM}
\begin{cases}
\partial_t v_M - \partial_{x}^3 v_M + (p+1)\pi_M\Pi (\partial_x v_M \Pi v_M^p)= 0\\
v_M(0,x)=\pi_M \varphi(x).\end{cases}
\end{equation}
The projector $\Pi$ in the nonlinear term of \eqref{vM} is of fundamental importance because it allows canceling resonant nonlinear interactions in Bourgain spaces.  
Next we denote by $\Phi_M^{\mathcal G} (t)\varphi$ the flow associated with
\begin{equation}\label{vMpiN}
\begin{cases}
\partial_t v - \partial_{x}^3 v + (p+1)\pi_M\Pi ((\partial_x \pi_M v) \Pi (\pi_M v)^p)= 0\\
v(0,x)=\varphi(x)\end{cases}
\end{equation}
 and by $\Phi_\infty^{\mathcal G} (t)\varphi$ 
the flow associated with 
\begin{equation}\label{vMinfty}
\begin{cases}
\partial_t v - \partial_{x}^3 v + (p+1)\Pi (\partial_x v\ \Pi v^p)= 0\\
v(0,x)=\varphi(x).\end{cases}
\end{equation}
We abuse notation, writing $\Phi_\infty^{\mathcal G} (t)=\Phi^{\mathcal G} (t)$. Notice that the solution to \eqref{vM} is provided by $\pi_M(\Phi_M^{\mathcal G} (t) \varphi)$.\\
In order to prove Proposition~\ref{Cauchy}, we work with the flow $\Phi_M^{\mathcal G} (t)$ and then go back to 
the original flow $\Phi_M(t)$.
\begin{proposition}\label{Cauchygauge}
Let $s\geq 1$ and $T>0$, then
\begin{equation}
  \label{existG}
\forall \varphi \in H^s
  \quad \exists!\,\, \Phi^{\mathcal G}(t)\varphi\in Y_T^s \text{ solution to } \quad \eqref{vMinfty}
\end{equation}
For every $R>0$, the map from $H^s\cap \{\varphi\in H^s \hbox{ s.t. } \|\varphi\|_{H^1}<R\}$ to $Y^s_T$
\begin{equation}\label{continuouGauge}
\varphi\rightarrow 
\pi_M \Phi_M^{\mathcal G}(t)\varphi \text{ is uniformly Lipschitz w.r.t.}\, M\in\N\cup\{\infty\}
\end{equation}
and there exists $C>0$, independent of $M\in \N\cup\{\infty\}$ such that
\begin{equation}\label{carbG}
 \forall \varphi\in H^{s} \hbox{ s.t. } \|\varphi\|_{H^1}<R\,,\quad \|\pi_M \Phi_M^{\mathcal G}(t) \varphi\|_{L^\infty([0,T];H^s)}\leq C\|\varphi\|_{H^s}\,.
\end{equation}
We also have the following convergence, for all compact ${\mathcal K}\subset H^s$,
\begin{equation}\label{vaffpaoGauge} 
\sup_{\varphi\in K} \|\pi_M (\Phi^{\mathcal G}_M(t)\varphi)-\Phi^{\mathcal G}(t)\varphi\|_{L^\infty([0,T];H^s) }\overset{M\rightarrow \infty} \longrightarrow 0\,.
\end{equation}
Finally for all $\varepsilon>0$ there exists $C>0$ independent of $M\in \N\cup\{\infty\}$ such that,
\begin{equation}\label{unifL6bOUGauge}
\forall \varphi\in H^{s+\varepsilon} \,\text{ s.t. }\, \|\varphi\|_{H^1}<R\,,\quad \|\pi_M (\Phi^{\mathcal G}_M(t)\varphi)\|_{L^6((0,T);W^{s,6})}\leq C  \|\varphi\|_{H^{s+\epsilon}}\,.
\end{equation}
\end{proposition}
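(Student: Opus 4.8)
The plan is to run a contraction-mapping argument for the Duhamel formulation of the gauged equation \eqref{vMinfty}, carried out simultaneously for every $M\in\N\cup\{\infty\}$ so that the resulting bounds come out uniform in $M$. The entire scheme rests on one multilinear estimate for the gauged nonlinearity: for the appropriate dual norm associated with $Y^s_T$ (which at the endpoint $b=\tfrac12$ must incorporate the companion $\ell^2_nL^1_\tau$ piece built into the definition of $Y^s$), one seeks a bound of the shape
\[
\big\| \Pi\!\big(\partial_x v\,\Pi(v^p)\big)\big\|_{X^{s,-\frac12+}_T}\leq C\,T^{\theta}\,\|v\|_{Y^s_T}^{p+1}\,,\qquad \theta>0\,,
\]
together with its refined ``one high, rest low'' version $\leq C\,T^\theta\|v\|_{Y^s_T}\|v\|_{Y^1_T}^{p}$. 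The proof of this estimate (deferred to the Appendix) uses Bourgain's $L^6$ Strichartz inequality for the KdV group, in the form adapted to $Y^s$ so that it holds at the $b=\tfrac12$ endpoint. The decisive role of the projector $\Pi$ in \eqref{vMinfty} is that it removes the resonant self-interaction, so that the single derivative carried by $\partial_x v$ never has to land on the highest frequency; this is exactly what lets the estimate close at the subcritical regularity $s=1$, the factor $T^\theta$ reflecting subcriticality.

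Granting this estimate, the first step is \eqref{existG}: for $\|\varphi\|_{H^1}<R$ I pick $T=T(R)$ small enough that the Duhamel map contracts on a ball of $Y^s_T$, producing the unique solution $\Phi^{\mathcal G}(t)\varphi\in Y^s_T\subset\mathcal C([0,T];H^s)$. Running the same fixed point for the truncated nonlinearity \eqref{vMpiN}, and using that $\pi_M$ and $\Pi$ are bounded on all the relevant spaces uniformly in $M$, gives solutions $\pi_M\Phi^{\mathcal G}_M(t)\varphi$ on the same interval with contraction constant independent of $M$; the corresponding difference estimate then yields the uniform Lipschitz bound \eqref{continuouGauge}. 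Since the local existence time depends only on $\|\varphi\|_{H^1}$, to reach an arbitrary $T$ I iterate, exploiting that the gauge is a time-dependent spatial translation and hence preserves every $H^s$ norm: the $H^1$ norm of the gauged solution coincides with that of the genuine gKdV solution and is therefore controlled by the conserved Hamiltonian $\mathcal H$ in the defocusing case. This a priori $H^1$ control allows the local intervals to be chained a bounded number of times over $[0,T]$, and feeding the refined multilinear estimate (which returns only $H^1$-level quantities on the low factors) into this iteration propagates the top-order regularity and gives the uniform bound \eqref{carbG}.

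For the convergence \eqref{vaffpaoGauge} I write the equation satisfied by $\pi_M\Phi^{\mathcal G}_M(t)\varphi-\Phi^{\mathcal G}(t)\varphi$: its source terms all carry a factor $(1-\pi_M)$ (or $(1-\pi_M)\Pi$) acting on fixed $Y^s_T$ data, which tends to $0$ in the relevant norm, while the genuinely multilinear remainder is absorbed by the same estimate and the uniform bounds already obtained. Combined with the uniform Lipschitz dependence \eqref{continuouGauge} and a standard compactness argument, this upgrades the pointwise convergence to convergence uniform over compact subsets $\mathcal K\subset H^s$. Finally \eqref{unifL6bOUGauge} follows by running the whole machinery at regularity $s+\varepsilon$, which produces a uniform-in-$M$ bound $\|\pi_M\Phi^{\mathcal G}_M(t)\varphi\|_{Y^{s+\varepsilon}_T}\leq C\|\varphi\|_{H^{s+\varepsilon}}$, and then invoking the Strichartz embedding tailored to $Y^{s}$, namely
\[
\|u\|_{L^6((0,T);W^{s,6})}\leq C\,\|u\|_{Y^{s+\varepsilon}_T}\,.
\]

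The main obstacle is the multilinear estimate itself, and in particular establishing it uniformly in $M$ at the $b=\tfrac12$ endpoint: one must track the $\Pi$-cancellation carefully enough that the derivative is always transferred away from the highest frequency, and extract a positive power of $T$ by the usual time-localization in restriction spaces so as to close the contraction with existence time depending only on $\|\varphi\|_{H^1}$. The accompanying persistence-of-regularity step is the other delicate point, since it requires the top-order analysis to feed back only $H^1$-level quantities, which is precisely what the refined form of the estimate is designed to ensure.
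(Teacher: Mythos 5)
Your proposal is correct and follows essentially the same route as the paper: a contraction argument in $Y^s_T$ based on the tame multilinear estimate with a $T^\kappa$ gain (proved in the Appendix following Colliander--Keel--Staffilani--Takaoka--Tao), globalization by iterating the local theory using the conserved (truncated) Hamiltonian to control the $H^1$ norm uniformly in $M$, the difference equation with $(1-\pi_M)$ source terms for the convergence, and Bourgain's $L^6$ Strichartz estimate transferred to the restriction spaces for the $W^{s,6}$ bound.
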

Notice that along the statement of Proposition \ref{Cauchygauge} we claim Lipschitz continuity of the flow. This property will be crucial in order to prove \eqref{vaffpaoGauge}. Once we are done with this proposition,  we can go back to the original flow
$\Phi_M(t)$ by using the following result concerning a time-dependent version of the continuity of the translation operator.
\begin{lemme}\label{trans} Let $s\geq 0$ and ${\mathcal W}\subset {\mathcal C}([0,T]; H^s)$ be compact. Assume that for every $w(t,x)\in {\mathcal W}$ there exists a sequence 
$w_M(t,x)\in {\mathcal C}([0,T];H^s)$ and functions $\tau_M^w, \tau^w \in {\mathcal C}([0,T]; \R)$ 
such that:
\begin{equation}\label{uniformL6G}
\sup_{w\in {\mathcal K}}\|w_M(t,x) - w(t,x)\|_{L^\infty([0,T]; H^s)}\overset{M\rightarrow \infty} \longrightarrow 0
\end{equation}
and
\begin{equation}\label{uniformLinftyL6G}
\sup_{w\in {\mathcal K}}\|\tau_M^w(t) - \tau^w (t)\|_{L^\infty([0,T]; \R)}\overset{M\rightarrow \infty} \longrightarrow 0.
\end{equation}
Then we have
\begin{equation}\label{uniformL6Hs}
\sup_{w\in {\mathcal K}} \|w_M(t,x+\tau_M^w(t)) - w(t,x+\tau^w(t))\|_{L^\infty((0,T); H^{s})}\overset{M\rightarrow \infty} \longrightarrow 0.
\end{equation}
\end{lemme}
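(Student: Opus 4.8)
The plan is to split the difference by the triangle inequality into a term where only the functions differ (at a common translation amount) and a term where only the translations differ (for a common function), and to control each separately. For fixed $w$ in the compact set $\mathcal{W}$ and $t\in[0,T]$ I would write
\begin{align*}
\|w_M(t,\cdot+\tau_M^w(t)) - w(t,\cdot+\tau^w(t))\|_{H^s}
&\leq \|w_M(t,\cdot+\tau_M^w(t)) - w(t,\cdot+\tau_M^w(t))\|_{H^s}\\
&\quad + \|w(t,\cdot+\tau_M^w(t)) - w(t,\cdot+\tau^w(t))\|_{H^s}.
\end{align*}

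The first step is to record that spatial translation by any real number is an isometry of $H^s(\T)$: on the Fourier side it multiplies $c_n$ by the unimodular factor $e^{ina}$, hence preserves $\sum_n\langle n\rangle^{2s}|c_n|^2$ (equivalently, it commutes with $\partial_x$ and preserves the $L^2$ norm). Consequently the first term on the right-hand side equals $\|w_M(t,\cdot)-w(t,\cdot)\|_{H^s}$; taking the supremum over $t\in[0,T]$ and over $w\in\mathcal{W}$, it tends to $0$ as $M\to\infty$ directly by hypothesis \eqref{uniformL6G}.

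The heart of the matter is the second term. First I would note that the evaluation set $\mathcal{F}=\{w(t):w\in\mathcal{W},\ t\in[0,T]\}$ is a compact subset of $H^s$, being the image of the compact set $\mathcal{W}\times[0,T]$ under the jointly continuous evaluation map $(w,t)\mapsto w(t)$. The key step, and the main obstacle, is to upgrade the elementary pointwise continuity of translation to a \emph{uniform} statement over $\mathcal{F}$: for every $\ep>0$ there is $\delta>0$ with $\sup_{f\in\mathcal{F}}\sup_{|a|<\delta}\|f(\cdot+a)-f\|_{H^s}<\ep$. This follows from a standard compactness argument: cover $\mathcal{F}$ by finitely many balls $B(f_i,\ep/3)$, pick $\delta_i$ so that $\|f_i(\cdot+a)-f_i\|_{H^s}<\ep/3$ for $|a|<\delta_i$ (using that for each fixed $f_i$ one has $\|f_i(\cdot+a)-f_i\|_{H^s}\to0$ as $a\to0$, which is the usual dominated-convergence statement on the Fourier side), set $\delta=\min_i\delta_i$, and estimate any $f\in\mathcal{F}$ against the nearest $f_i$, absorbing the two outer terms via the translation isometry of the first step.

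With this uniform modulus of continuity in hand, I would conclude by applying the isometry once more: for $g=w(t)\in\mathcal{F}$,
\[
\|w(t,\cdot+\tau_M^w(t)) - w(t,\cdot+\tau^w(t))\|_{H^s} = \|g(\cdot+(\tau_M^w(t)-\tau^w(t)))-g\|_{H^s},
\]
and since $|\tau_M^w(t)-\tau^w(t)|\leq \sup_{w,t}|\tau_M^w(t)-\tau^w(t)|\to0$ as $M\to\infty$ by \eqref{uniformLinftyL6G}, for $M$ large the shift lies below $\delta$ uniformly in $w\in\mathcal{W}$ and $t\in[0,T]$, making the second term $<\ep$ uniformly. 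Combining the two bounds yields \eqref{uniformL6Hs}. The only genuine difficulty is the uniform continuity of translation over the compact set $\mathcal{F}$; everything else reduces to the triangle inequality and the isometry property.
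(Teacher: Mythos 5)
Your proof is correct, and it takes a genuinely different (and arguably cleaner) route than the paper's. Two differences are worth noting. First, the decomposition: you insert the cross term $w(t,\cdot+\tau_M^w(t))$, so that the translation-difference term involves the limit $w$, which lives directly in the compact set $\mathcal W$; the paper instead inserts $w_M(t,\cdot+\tau^w(t))$, keeping $w_M$ in the translation-difference term, and must then use \eqref{uniformL6G} a second time to trade $w_M$ back for $w$. Second, the way compactness is exploited: you establish, via a finite $\varepsilon/3$-covering of the compact evaluation set $\mathcal F=\{w(t): w\in\mathcal W,\ t\in[0,T]\}\subset H^s$, a \emph{uniform} modulus of continuity for translation over $\mathcal F$, and then conclude directly; the paper argues by contradiction, extracting a subsequence $w^k\to w^*$ in ${\mathcal C}([0,T];H^s)$ and times $t^k\to t^*$, and reduces everything to the continuity of translation at the single function $w^*(t^*)$. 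Your direct argument yields a quantitative statement (a modulus valid uniformly over $\mathcal F$) and avoids the subsequence bookkeeping, at the mild cost of having to check joint continuity of the evaluation map $(w,t)\mapsto w(t)$ to get compactness of $\mathcal F$ — which you do correctly. Both proofs rest on the same two ingredients (translation is an isometry of $H^s(\T)$; translation is strongly continuous at each fixed function), so the difference is one of packaging rather than substance, but the packaging is genuinely distinct.
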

\begin{proof}[ Proof of Proposition \ref{Cauchy}] Together Proposition \ref{Cauchygauge} and Lemma \ref{trans} imply Proposition \ref{Cauchy}.
From \eqref{utov},
$$[\pi_M(\Phi_M (t) \varphi)](x)=[\pi_M(\Phi_M^{\mathcal G}(t) \varphi)](x+\tau_M^\varphi(t))$$
where 
\begin{equation}
  \label{paramtrans}
  \tau_M^\varphi(t)=-(p+1)\int_0^t \int_\T [\Phi_M^{\mathcal G}(t) \varphi]^{p+1} dxdt\,,
\end{equation}
hence \eqref{exist} follows by \eqref{existG}.
 As $W^{s,6}$ is translation invariant, \eqref{unifL6bOU}  follows by \eqref{unifL6bOUGauge}, \eqref{carb} follows by \eqref{carbG}.  Moreover, \eqref{vaffpao} follows by \eqref{vaffpaoGauge} in conjunction with
Lemma~\ref{trans}, where we choose ${\mathcal W}=\{\Phi^\mathcal G(t)\varphi\hbox{ s.t. } \varphi\in \mathcal K\}$, and if we denote $w=\Phi^\mathcal G(t)\varphi$  then the corresponding translation parameters $\tau_M^w(t)$ 
that appear in Lemma~\ref{trans} are given by \eqref{paramtrans}.
Notice that the compactness of $\{\Phi^{\mathcal G}(t) \varphi|\varphi\in {\mathcal K}\}$ in $\mathcal C([0,T]; H^s)$, required in Lemma~\ref{trans}, comes as a by-product of continuity of the flow map $\Phi^{\mathcal G}(t)$ (see \eqref{continuouGauge}) along with the embedding $Y^s_T\subset \mathcal C([0,T]; H^s)$.
Uniform convergence of the translation parameters as $M\rightarrow \infty$ follows by \eqref{vaffpaoGauge} in conjunction with Sobolev embedding
$H^s\subset L^{p+1}$.
\end{proof}
\begin{proof}[Proof of Lemma \ref{trans}]
By translation invariance of the $H^s$ norm and by \eqref{uniformL6G}
we get
\begin{equation*}
\sup_{w\in {\mathcal W}} \|w_M(t,x+\tau^w(t)) - w(t,x+\tau^w(t))\|_{L^\infty([0,T]; H^s)}\overset{M\rightarrow \infty} \longrightarrow 0\,
\end{equation*}
hence, it will be enough to prove
\begin{equation}\label{uniformL6tau}
\sup_{w\in {\mathcal W}}\|w_M(t,x+\tau_M^w(t)) - w_M(t,x+\tau^w(t))\|_{L^\infty([0,T]; H^s)}\overset{M\rightarrow \infty} \longrightarrow 0
\end{equation}
in order for \eqref{uniformL6Hs} to hold.
Again by translation invariance of the $H^s$ norm, \eqref{uniformL6tau} is equivalent to 
\begin{equation*}
\sup_{w\in {\mathcal W}} \|w_M(t,x+\tau_M^w(t)-\tau^w(t)) - w_M(t,x)\|_{L^\infty([0,T]; H^s)}\overset{M\rightarrow \infty} \longrightarrow 0.
\end{equation*}
Assume by contradiction there exist $w^k\in {\mathcal W}$ and times $t_{M(k)}^k \in [0,T]$ with $M(k)\overset{k\rightarrow \infty} \longrightarrow \infty$ such that 
\begin{equation*}
\|w_M^k (t_{M(k)}^k,x+\tau_M^{w^k}(t_{M(k)}^k)-\tau^{w_k}(t_{M(k)}^k)) - w_M^k(t_{M(k)}^k ,x)\|_{H^s}
>\epsilon_0>0\,,
\end{equation*}
by \eqref{uniformL6G} we get 
\begin{equation*}\label{uniformL6tauequiv}
\|w^k (t_{M(k)}^k,x+\tau_{M(k)}^{w^k}(t_{M(k)}^k)-\tau^{w^k}(t_{M(k)}^k)) - w^k(t_{M(k)}^k ,x)\|_{H^s}>\frac{\epsilon_0}2>0\,.
\end{equation*}
We claim that this cannot be:  by compactness of ${\mathcal W}$ we can assume that $$w^k\overset{k\rightarrow \infty} \longrightarrow w^*\in {\mathcal W}
\hbox{ in } {\mathcal C}([0,T];H^s)\,,$$ therefore
$$\|w^* (t_{M(k)}^k,x+\tau_{M(k)}^{w^k}(t_{M(k)}^k)-\tau^{w^k}(t_{M(k)}^k)) - w^*(t_{M(k)}^k ,x)\|_{H^s}>\frac{\epsilon_0}4>0.$$
Next, up to a subsequence, $t_{M(k)}^k\overset{k\rightarrow \infty} \longrightarrow t^*$ and by continuity of the function $w^*$ w.r.t. time, we get
\begin{equation}\label{absurtraslnew} \|w^* (t^*,x+\tau_{M(k)}^{w^k}(t_{M(k)}^k)-\tau^{w^k}(t_{M(k)}^k)) - w^*(t^* ,x)\|_{H^s}>\frac{\epsilon_0}8>0.\end{equation}
But by \eqref{uniformLinftyL6G} we have 
$$ |\tau_{M(k)}^{w^k}(t_{M(k)}^k)-\tau^{w_k}(t_{M(k)}^k)|\overset{k\rightarrow \infty}
\longrightarrow 0$$ and \eqref{absurtraslnew} contradicts boundedness of the translation operator in (time independent) $H^s$.\end{proof}
Proposition \ref{Cauchygauge} will follows from the proposition below, which holds for finite time intervals. The defocusing character of our equation is crucial in order to extend to large times these local in time properties.
\begin{proposition}\label{Cauchy_bisG}
Let $s\geq 1$ and $R>0$, then there exists $T>0$ such that:
\begin{equation}\label{YTs}\forall \varphi \in H^s\cap \{\varphi\in H^s \hbox{ s.t. } \|\varphi\|_{H^1}<R\}\,\quad 
  \exists!\,\, \Phi^{\mathcal G}(t)\varphi\in Y_T^s \text{ solution to }\,\,\, \eqref{vMinfty}\,;
\end{equation}
the solution maps from $H^s\cap \{\varphi\in H^s \hbox{ s.t. } \|\varphi\|_{H^1}<R\}$ to $Y^s_T$
\begin{equation}\label{continuouG}
\varphi\rightarrow \pi_M \Phi_M^{\mathcal G}(t)\varphi \text{ are uniformly Lipschitz w.r.t. } M\in\N\cup\{\infty\}\,.
\end{equation}
Moreover there exists a constant $C>0$ independent of $M\in \N\cup\{\infty\}$ such that
\begin{equation}\label{carbGG}
\forall \varphi\in H^{s} \hbox{ s.t. } \|\varphi\|_{H^1}<R\,,\quad \|\pi_M \Phi_M(t) \varphi\|_{L^\infty([0,T];H^s)}\leq C\|\varphi\|_{H^s}\,.
\end{equation}
We also have the following convergence, for all compact $ {\mathcal K}\subset H^s $
\begin{equation}\label{vaffpaoG} 
\sup_{\varphi\in \mathcal K} \|\pi_M (\Phi^{\mathcal G}_M(t)\varphi)-\Phi^{\mathcal G}(t)\varphi\|_{L^\infty([0,T];H^s) }\overset{M\rightarrow \infty} \longrightarrow 0\,,
\end{equation}
and for every $\varepsilon>0$ there exists $C>0$ independent of $M\in \N\cup\{\infty\}$ such that
\begin{equation}\label{unifL6bOUG}
\forall \varphi\in H^{s+\varepsilon} \hbox{ s.t. } \|\varphi\|_{H^1}<R\,,\quad \|\pi_M (\Phi^{\mathcal G}_M(t)\varphi)\|_{L^6((0,T);W^{s,6})}\leq C  \|\varphi\|_{H^{s+\epsilon}} \,.
\end{equation}
\end{proposition}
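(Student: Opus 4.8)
The plan is to construct the gauged flows by a fixed-point argument in the Bourgain space $Y_T^s$. Writing the solution of \eqref{vMinfty} through Duhamel's formula,
$$
v(t)=e^{t\de_x^3}\varphi-(p+1)\int_0^t e^{(t-t')\de_x^3}\,\Pi\big(\de_x v\,\Pi v^p\big)(t')\,dt',
$$
and using the standard homogeneous bound $\|e^{t\de_x^3}\varphi\|_{Y_T^s}\lesssim\|\varphi\|_{H^s}$ together with the inhomogeneous $Y^s$ estimate $\big\|\int_0^t e^{(t-t')\de_x^3}F(t')\,dt'\big\|_{Y_T^s}\lesssim\|F\|_{N_T^s}$ in the associated dual space $N_T^s$, the whole problem reduces to a single multilinear estimate controlling the dual norm of $\Pi(\de_x v\,\Pi v^p)$; this is the key estimate proved in the Appendix. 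Its crucial features are that it is \emph{tame}, of the schematic form
$$
\big\|\Pi(\de_x v\,\Pi v^p)\big\|_{N_T^s}\lesssim \|v\|_{Y_T^s}\,\|v\|_{Y_T^1}^{p},
$$
and that it gains a small power of $T$ through the time localization. Both the cancellation of the worst term $(p+1)(\int v^p)\de_x v$ carried out before \eqref{vM} and the outer projector $\Pi$, which removes the resonant self-interaction, are what make Bourgain's $L^6$ Strichartz inequality sufficient to prove it.

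Granting this estimate, I would first close the contraction at regularity $s=1$: on a ball of $Y_T^1$ the Duhamel map is a contraction once $T$ is small enough in terms of $R$, which fixes the existence time $T=T(R)$ of \eqref{YTs}. Because the multilinear estimate is linear in the highest norm and measures the remaining factors in $Y_T^1$, persistence of regularity then propagates any datum $\varphi\in H^s$ on the \emph{same} interval with an a priori bound linear in $\|\varphi\|_{H^s}$, which is \eqref{carbGG}. The projector $\pi_M$ has norm one, commutes with $e^{t\de_x^3}$, and may be inserted into each factor of the multilinear estimate without cost, so running the same fixed point for \eqref{vMpiN} produces the truncated flows $\pi_M\Phi_M^{\mathcal G}(t)\varphi$ with all constants uniform in $M\in\N\cup\{\infty\}$.

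The uniform Lipschitz bound \eqref{continuouG} comes from applying the difference (polarized) form of the multilinear estimate to the difference of two Duhamel formulas, which on the small ball yields a contraction factor uniform in $M$. For the convergence \eqref{vaffpaoG} I would write the equation for $\pi_M(\Phi_M^{\mathcal G}(t)\varphi)-\Phi^{\mathcal G}(t)\varphi$: the data differ by $\pi_M\varphi-\varphi$, which tends to $0$ in $H^s$, while the difference of nonlinearities splits into a part absorbed by the uniform Lipschitz estimate and a remainder carrying a factor $(1-\pi_M)$ acting on a fixed $Y_T^s$ function, hence tending to $0$. This gives convergence for each fixed $\varphi$; the supremum over the compact set $\mathcal K$ follows by covering $\mathcal K$ with finitely many small balls and using the uniform Lipschitz continuity.

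Finally, \eqref{unifL6bOUG} is obtained from Bourgain's $L^6$ inequality, which bounds $\|v\|_{L^6((0,T);W^{s,6})}$ by $\|v\|_{X_T^{s^+,1/2}}\lesssim\|v\|_{Y_T^{s+\ep}}$ at the cost of an arbitrarily small loss of derivatives; carrying out the construction above with datum in $H^{s+\ep}$ and absorbing the $s^+$ loss into the extra $\ep$ derivatives yields the stated bound, uniformly in $M$. I expect the main obstacle to be the multilinear estimate itself: one must distribute the single derivative of $\de_x v$ across the remaining factors and combine the modulation gain from $\langle\tau+n^3\rangle$ with the resonance removal provided by $\Pi$ so that no factor is left with a net loss of a derivative, and one must do so in the tame, difference, and $\pi_M$-uniform forms needed above. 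Once these are in hand, the remainder is the routine $X^{s,b}$ fixed-point and persistence-of-regularity machinery.
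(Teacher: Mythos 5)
Your proposal is correct and follows essentially the same route as the paper: the tame multilinear estimate with the $T^\kappa$ gain (proved in the Appendix), a fixed point closed first in $Y_T^1$ with $T=T(R)$ and then persistence of regularity linear in $\|\varphi\|_{H^s}$, uniformity in $M$ from the boundedness of $\pi_M$, the difference/polarized estimate for the Lipschitz bound, the $(1-\pi_M)$ splitting plus compactness covering for the convergence, and Bourgain's $L^6$ Strichartz with an $\varepsilon$-loss for the $W^{s,6}$ bound. The only cosmetic difference is that you phrase the inhomogeneous estimate through a dual space $N_T^s$, whereas the paper states it directly for the Duhamel operator (via the $Z^s$ norm in the Appendix).
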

\begin{proof}[Proof of Proposition \ref{Cauchygauge}] We now prove that Proposition  \ref{Cauchy_bisG} implies Proposition \ref{Cauchygauge}.
The following quantity
\begin{equation}\label{trunchamil}
\frac{1}{2} \int \big (\pi_M (\Phi_M^{\mathcal G}(t)\varphi)\big )^{2}+ \big (\partial_x \pi_M (\Phi_M^{\mathcal G}(t)\varphi)\big )^2 \\+\frac{1}{p+2}\int \big (\pi_M (\Phi_M^{\mathcal G}(t)\varphi)\big )^{p+2}
\end{equation}
is conserved by \eqref{gKdVtrunc} for $M\in \N\cup\{\infty\}$. In particular if we set $M=\infty$ and $R>0$ we get
$$\sup_{\substack{t\in [0, T^{\max}_\varphi)\\\varphi \in H^s
\hbox{ s.t. }  \|\varphi\|_{H^1}<R}} \|\Phi^{\mathcal G}(t)\varphi\|_{H^1}=K(R)<\infty
$$
where $T_\varphi^{\max}$ is the maximal time of existence of a solution in the iteration space $Y_T^s$. The solution $\Phi^{\mathcal G}(t)\varphi$ retains  Sobolev regularity $H^s$ and the corresponding $H^1$ norm stays below $K(R)$ up to $T_\varphi^{max}$.
Hence we have $T_\varphi^{\max}=\infty$ as we can iterate infinitely many times the local existence result
provided by \eqref{YTs} where $R$ is replaced by $K(R)$ and $T$ is the corresponding time of local existence in \eqref{YTs}.
We also have,  by \eqref{trunchamil} 
$$\sup_{\substack{M\in \N\cup\{\infty\}, \hbox{ } t\in [0, \infty)\\\varphi \in H^s
\hbox{ s.t. }  \|\varphi\|_{H^1}<R}} \|\pi_M \Phi_M^{\mathcal G}(t)\varphi\|_{H^1}=\tilde K(R)<\infty
$$
and arguing as above one easily checks that \eqref{continuouGauge} follows by iterations of \eqref{continuouG}, with $R=\tilde K(R)$.
By a similar iteration argument \eqref{unifL6bOUGauge} follows by \eqref{unifL6bOUG} and \eqref{carbG} follows by \eqref{carbGG}. Finally \eqref{vaffpaoG} and \eqref{continuouGauge} 
imply \eqref{vaffpaoGauge}
following a general argument from \cite{sigma}.
\end{proof}

\begin{proof}[Proof of Proposition \ref{Cauchy_bisG}] 
 Denote by $S(t)$ the linear group associated with linear KdV equation, namely $S(t)=e^{t\partial_x^3}$. Then \eqref{vM} rewrites, in integral form,
\begin{equation}\label{fixedpoint}
v_M(t)=S(t)(\pi_M \varphi)+(p+1)\int_{0}^t S(t-\tau) \pi_M\Pi ( \partial_x v_M(\tau) \Pi v_M^p(\tau))d\tau\,.
\end{equation}
The analysis of \cite{CKSTT}, pages 183-186 and pages 197-200  may be used to obtain that for $s\geq 1$,
\begin{equation}\label{tame}
\Big\|\int_{0}^t S(t-\tau) \pi_M\Pi ( \partial_x w(\tau) \Pi w^p(\tau))d\tau\Big\|_{Y^s_T}
\leq CT^{\kappa}\|w\|_{Y^1_T}^p \|w\|_{Y^s_T},
\end{equation}
where $\kappa>0$ and $T\in (0,1)$.  We refer to the appendix for the proof of \eqref{tame}. Notice that \eqref{tame} is a slightly modified version compared with the one available in the literature: we gain a power of $T$, which is very important later. By a similar argument one proves a multi-linear estimate for $s\geq 1$:
\begin{equation}\label{multi_linear}
\Big\|\int_{0}^t S(t-\tau) \pi_M\Pi 
( \partial_x w_{p+1}(\tau)
\Pi (w_1(\tau)\times \dots \times w_p(\tau))
)d\tau\Big\|_{Y^s_T}
\leq CT^{\kappa}
\sum_{i=1}^{p+1}\big( \|w_{i}\|_{Y^s_T} \prod_{\substack{j=1,\dots, p+1\\j\neq i}} \|w_{j}\|_{Y^1_T}
\big)
\end{equation}
and existence and uniqueness follows by a classical fixed point argument in the space $Y_T^s$.

Applying \eqref{tame} with $s=1$, $ w=v_N$ and recalling \eqref{fixedpoint}, we obtain that $\|v_M\|_{Y^1_T}\leq C\|\varphi \|_{H^1}$ provided $T$ is small enough depending only on a bound for $\varphi$ in $H^1$. 
Applying once again \eqref{tame}, we get
$$
\|v_M\|_{Y^s_T}\leq C\|\varphi \|_{H^s}+CT^\kappa (C\|\varphi \|_{H^1})^p \|v_M\|_{Y^s_T}
$$
which implies 
$$
\|v_M\|_{Y^s_T}\leq C\|\varphi \|_{H^s}
$$
by possibly taking $T$ smaller but still depending only on an $H^1$ bound for $\varphi$.   By the embedding $Y_T^s\subset L^\infty([0,T];H^s)$, \eqref{carbGG} follows  and we also get
\begin{equation}\label{BL}
\|v_M\|_{X^{s,\frac{1}{2}}_T}\leq C\|\varphi \|_{H^s}.
\end{equation}
Now we invoke the Strichartz estimate $(8.37)$ of \cite{B1} :
$$
\|S(t)g\|_{L^6((0,T); L^6)}\leq C \|g\|_{H^{\sigma}},\quad \sigma>0
$$
which together with the transfer principle from \cite[Lemma 2.3]{GTV} yields 
\begin{equation}\label{L666}
\|w\|_{L^6((0,T); L^{6}) }\leq C \|w\|_{ X^{\sigma,b}_T}, \quad b>\frac 12.
\end{equation}
Next let $w\in X^{\frac 13, \frac 13}_T$, then we may assume without loss of generality that $w$ is a global space time function such that $ \|w\|_{X^{\frac 13, \frac 13}}\leq 2 \|w\|_{X^{\frac 13, \frac 13}_T}$. By Sobolev embedding $H^\frac 13\subset L^6$ and $S(t)$ being an isometry on $H^s$,
\begin{equation*}
\|w\|_{L^6(\R;L^6(\T))}\leq C \|S(-t) w(t,.)\|_{L^6(\R;H^\frac 13(\T))} 
\leq C  \|\langle D \rangle_x^\frac 13 (S(-t) w(t,.))\|_{L^6(\R;L^2(\T))} 
\end{equation*}
and by Minkowski inequality and Sobolev embedding (that we now exploit w.r.t. the time variable)
\begin{multline*}\dots \leq C \|\langle D \rangle_x^{\frac 13} (S(-t) w(t,.))\|_{L^2(\T; L^6(\R))}
\leq C \|\langle D \rangle_x^{\frac 13} S(-t) w(t,.)\|_{L^2(\T; H^\frac 13 (\R))}\\
=C \| \langle D \rangle_t^{\frac 13}\langle D \rangle_x^\frac 13 (S(-t) w(t,.))\|_{L^2(\R\times\T)} =C\|w\|_{X^{\frac 13,\frac 13}}\leq 2C \|w\|_{X_T^{\frac 13,\frac 13}}
\end{multline*}
so that $\|w\|_{L^6((0,T); L^{6}) }\leq C \|w\|_{X^{\frac 13,\frac 13}_T}$.
Interpolation with \eqref{L666} yields
$$
\forall \,\varepsilon>0\,,\quad \|w\|_{L^6((0,T); L^6)}\leq C \|w\|_{X^{\varepsilon,\frac 12}_T}\,.
$$
By choosing $w=v_M$ and recalling \eqref{BL} where we replace $s$ by $s+\varepsilon$,
$$\|v_M\|_{L^6((0,T); W^{s,6})}\leq C \|v_M\|_{X^{s+\varepsilon,\frac 12}_T}\leq C \|\varphi\|_{H^{s+\varepsilon}}, \quad \forall \varepsilon>0,$$
and we get \eqref{unifL6bOUG}. 
The proof of \eqref{continuouG} follows by \eqref{multi_linear} by considering the difference of two solutions.\\
Finally,
\begin{equation*}
\pi_M\Pi (\partial_x v_M \Pi v_M^p))
-\Pi (\partial_x v \Pi  v^p )
=
\\
\pi_M \Pi
(\partial_x v_M
\Pi(v_M^p-v^p))+ (\partial_x v_M-\partial_x v)
\Pi v^p
)-(1-\pi_M)\Pi ( \partial_x v \Pi v^p)\,,
\end{equation*}
where $v_M, v$ are solutions to \eqref{vM} and \eqref{vMinfty}. Therefore using \eqref{multi_linear}, where we choose $p$ factors $w_i$ equal to either $v_M, v$ and one factor equal to $v-v_M$, writing the fixed point equation solved by $v-v_M$, and recalling \eqref{continuouG},
we get (see e.g. \cite[Proposition~2.7]{sigma} for details), with $\mathcal K$ being a compact in $H^s$,
$$\sup_{\varphi\in {\mathcal K}} \|\pi_M \Phi_M^{\mathcal G}(t)\varphi-\Phi^{\mathcal G} (t)\varphi\|_{Y^s_T}\overset{M\rightarrow \infty} \longrightarrow 0\,.$$
Therefore we get \eqref{vaffpaoG} by using the continuous embedding $Y^s_T\subset L^\infty ([0,T]; H^s)$.
\end{proof}
\section{The energy ${\mathcal E}_k (u)$ and proof of Theorem \ref{basicA}}\label{N=infty}

Recall  $p\in 2\N$ is the integer involved in the nonlinearity in \eqref{gKdV} and the following notations will be used without any further comment.
For any time independent function $u(x)$ we write $\int u=\int_\T u(x) dx$ and similarly for any time dependent function $w(t,x)$
and $T>0$ we write $\int_0^T\int w=\int_0^T\int_\T w(t,x) dx dt$.
\\

We now introduce suitable densities that will be needed to construct the modified energy 
${\mathcal E}_k (u)$. We start with densities that represent the "worse contributions"
that will appear when we compute the time derivative of ${\mathcal E}_k (u)$
along solutions to \eqref{gKdV}. The specific structure of ${\mathcal E}_k (u)$ will allow us to erase the aforementioned "worse" contributions due to
algebraic cancellations.
\begin{definition}
For all $(j_0,j_1,j_2)\in \N^3$, let ${\mathcal I}_{j_0,j_1,j_2}(u)= u^{p-2}  (\partial_x^{j_0} u)^2 \partial_x^{j_1} u  \partial_x^{j_2} u$. For every integer $k>1$ 
we define sets of densities
\begin{equation*}
\Upsilon_k=\{{\mathcal I_{j_0, j_1,j_2}}(u) \hbox{ s.t. } (j_0, j_1, j_2)\in {\mathcal A}^k\}\,\quad \Xi_k=\{{\mathcal I}_{j_0, j_1,j_2}(u) \hbox{ s.t. } (j_0, j_1, j_2)\in {\mathcal B}^k\}
\end{equation*}
where sets of indices ${\mathcal A}^k$, ${\mathcal B}^k$ are defined as
\begin{align*}
  {\mathcal A}^k=\{(j_0,j_1, j_2)\in &\N^3 \hbox{ s.t. } j_2\leq j_1\leq j_0, \quad 2j_0+j_1+j_2=2k-2\}\\
{\mathcal B}^k=\{(j_0,j_1, j_2)\in & \N^3 \hbox{ s.t. } j_2\leq j_1\leq j_0, \quad 2j_0+j_1+j_2=2k+1\}\,.
\end{align*}
\end{definition}
We next introduce "good densities", namely densities that appear once 
we compute the time derivative of ${\mathcal E}_k (u)$
along solutions to \eqref{gKdV}, but are harmless since they can be handled thanks to the dispersive effect of \eqref{gKdV}.
\begin{definition}
Let ${\mathcal J}_{i_0, \dots , i_{m}}(u)=\prod_{l=0}^{m} \partial_x^{i_l} u$. For every { $k>1$} we define 
\begin{align*}
  \Theta_k = & \{{\mathcal J}_{j_0, \dots, j_{2p+1}}(u) \hbox{ s.t. } (j_{0}, \dots, j_{2p+1})\in {\mathcal C}^k\}\\
\Omega_k= & \{{\mathcal J}_{j_0, \dots, j_{p+1}}(u) \hbox{ s.t. } (j_{0}, \dots, j_{p+1})\in {\mathcal D}^k\},
\end{align*}
with 
\begin{align*}{\mathcal C}^k= &\{(j_0,\dots, j_{2p+1})\in \N^{2p+2} \text{ s.t. } j_{2p+1}\leq \dots \leq j_{0}, \quad\sum_{l=0}^{2p+1}j_l\leq 2k-1, \hbox{ } j_2\geq 1, j_0\leq k-1 \}\,\\
{\mathcal D}^k= & \{(j_0,\dots, j_{p+1})\in \N^{p+2} \text{ s.t. } 
j_{p+1}\leq \dots \leq j_{0}, \quad \sum_{l=0}^{p+1}j_l= 2k+1, \quad  j_4\geq 1 \}\,.
\end{align*}
\end{definition}
\begin{remark}
Roughly speaking, densities in $\Theta_k$ are homogeneous of order $2p+2$, the sum of the derivatives on each factor is $2k-1$ and
 derivatives are distributed on at least three factors.
Densities in $\Omega_k$ are homogeneous of order $p+2$, the sum of the derivatives is $2k+1$ and derivatives are shared on at least five factors.
\end{remark}
\begin{definition}\label{startstarstar}
For any $(j_0, j_1, j_2)\in {\mathcal A}^k$
we define
\begin{multline*}{\mathcal I}^*_{j_0, j_1, j_2}(u)=(p-2)  u^{p-3}  \partial_x^3 u (\partial_x^{j_0} u)^2 \partial_x^{j_1} u  \partial_x^{j_2} u
+ 2 u^{p-2}  \partial_x^{j_0} u\partial_x^{j_0+3} u \partial_x^{j_1} u  \partial_x^{j_2} u
\\+ u^{p-2} (\partial_x^{j_0} u)^2 \partial_x^{j_1+3}  u  \partial_x^{j_2} u 
+ u^{p-2} (\partial_x^{j_0} u)^2 \partial_x^{j_1} u  \partial_x^{j_2+3} u
\end{multline*}
and
\begin{multline*}{\mathcal I}^{**}_{j_0, j_1, j_2}(u)=(p-2)  u^{p-3}  \partial_x (u^{p+1}) (\partial_x^{j_0} u)^2 \partial_x^{j_1} u  \partial_x^{j_2} u
\\+ 2 u^{p-2}  \partial_x^{j_0} u\partial_x^{j_0+1} (u^{p+1}) \partial_x^{j_1} u  \partial_x^{j_2} u
+ u^{p-2} (\partial_x^{j_0} u)^2 \partial_x^{j_1+1} (u^{p+1})  u  \partial_x^{j_2} u 
\\+ u^{p-2} (\partial_x^{j_0} u)^2 \partial_x^{j_1} u  \partial_x^{j_2+1} (u^{p+1})\,.
\end{multline*}
\end{definition}
\begin{remark}\label{starstarstar}
The expression ${\mathcal I}^*_{j_0, j_1, j_2}(u)$
is obtained by considering the time derivative of the density ${\mathcal I}_{j_0, j_1, j_2}(u)$
and by replacing $\partial_t u$ with $\partial_x^3 u$.
Similarly ${\mathcal I}^{**}_{j_0, j_1, j_2}(u)$
is constructed 
by considering the time derivative of the density ${\mathcal I}_{j_0, j_1, j_2}(u)$
and by replacing $\partial_t u$ with $\partial_x (u^{p+1})$. Then, if $v(t,x)$ is a solution to \eqref{gKdV},
\begin{equation}\label{basics}\frac d{dt} \int {\mathcal I}_{j_0, j_1, j_2}(v) =\int {\mathcal I}^{*}_{j_0, j_1, j_2}(v) - \int {\mathcal I}^{**}_{j_0, j_1, j_2}(v)\,.
\end{equation}
\end{remark}
The next proposition, on the structure of ${\mathcal E}_k (u)$,  will be key in obtaining Theorem~\ref{basicA}.
{ We shall assume along next proposition that $v(t,x)$ is smooth in order to justify all the necessary computations. Then such smoothness assumption may be removed once we integrate in time \eqref{ddtgkdv} together with a density argument.}
\begin{proposition}\label{key} Let $k>1$. Then
for all $(j_0, j_1, j_2)\in {\mathcal A}^k$ there exists $\lambda_{j_0, j_1, j_2}\in \R$,
for all  $(j_0, \dots , j_{2p+1})\in {\mathcal C}^k$ there exists $\mu_{j_0, \dots, j_{2p+1}}\in \R$
and for all $(j_0, \dots , j_{p+1})\in {\mathcal D}^k$ there exists $\nu_{j_0, \dots, j_{p+1}}\in \R$,
such that, with
\begin{equation}\label{energyEk}
{\mathcal E}_k (u)=\|\partial^{k}_{x}u\|_{L^{2}}^2 - \sum_{(j_0, j_1, j_2)\in {\mathcal A}^k} 
\lambda_{j_0, j_1, j_2} \int {\mathcal I}_{j_0,j_1,j_2}(u)
\end{equation}
the following identity holds for  {  any smooth} solution $v(t,x)$ to \eqref{gKdV}:
\begin{multline}
  \label{ddtgkdv}
  \frac d{dt}  {\mathcal E}_k (v)= \sum_{(j_0, \dots , j_{2p+1})\in {\mathcal C}^k } 
\mu_{j_0, \dots, j_{2p+1}} \int {\mathcal J}_{j_0, \dots , j_{2p+1}}(v)  \\ +
\sum_{(j_0, \dots , j_{p+1})\in {\mathcal D}^k } 
\nu_{j_0, \dots, j_{p+1}} \int {\mathcal J}_{j_0, \dots , j_{p+1}}(v)- \sum_{(j_0, j_1, j_2)\in {\mathcal A}^k} 
\lambda_{j_0, j_1, j_2} \int {\mathcal I}_{j_0,j_1,j_2}^{**}(v)\,.
\end{multline}
\end{proposition}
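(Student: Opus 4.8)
The plan is to differentiate ${\mathcal E}_k(v)$ along a smooth solution $v$ of \eqref{gKdV}, to sort the resulting densities by their homogeneity degree and their total number of derivatives, and then to use the free parameters $\lambda_{j_0,j_1,j_2}$ to kill the genuinely dangerous contributions. Differentiating the leading term and substituting $\partial_t v=\partial_x^3 v-\partial_x(v^{p+1})$ gives
$$\frac{d}{dt}\|\partial_x^k v\|_{L^2}^2=2\int \partial_x^k v\,\partial_x^{k+3}v-2\int \partial_x^k v\,\partial_x^{k+1}(v^{p+1}).$$
The first, purely dispersive, integral vanishes after one integration by parts (with $f=\partial_x^k v$ one has $2\int f\,\partial_x^3 f=-\int\partial_x((\partial_x f)^2)=0$). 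Expanding $\partial_x^{k+1}(v^{p+1})$ by the Leibniz rule turns the second integral into a finite sum of densities of homogeneity $p+2$ carrying exactly $2k+1$ derivatives; the extreme term, where all $k+1$ derivatives hit a single factor, is $-2(p+1)\int v^p\partial_x^k v\,\partial_x^{k+1}v$, which after symmetrization is a multiple of $\int {\mathcal I}_{k,1,0}(v)$, the prototype of the worst densities in $\Xi_k$.

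Next I would bring every density to a canonical ordered form by repeated integration by parts, recording for each reduction whether it lands in $\Xi_k$ (indices in ${\mathcal B}^k$, derivatives clustered on few factors) or in $\Omega_k$ (indices in ${\mathcal D}^k$, derivatives spread over at least five factors, $j_4\geq1$). Applying \eqref{basics} to the correction term shows that $\frac{d}{dt}\int {\mathcal I}_{j_0,j_1,j_2}(v)$ splits into $\int {\mathcal I}^{*}_{j_0,j_1,j_2}(v)$ and $\int {\mathcal I}^{**}_{j_0,j_1,j_2}(v)$. The first family again has homogeneity $p+2$ with $2k+1$ derivatives, hence lives in the same sector as the dangerous part above and reduces to a combination of $\Xi_k$ and $\Omega_k$ densities; the second family has homogeneity $2p+2$ with $2k-1$ derivatives and, after expanding its inner factors $\partial_x^{j+1}(v^{p+1})$ by Leibniz, reduces to densities of the type collected in $\Theta_k$ (indices in ${\mathcal C}^k$).

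The heart of the argument is then to choose the $\lambda_{j_0,j_1,j_2}$ so that the total $\Xi_k$-component of $\frac{d}{dt}{\mathcal E}_k(v)$ vanishes. This is a finite linear system: the $\Xi_k$ coefficients produced by $\frac{d}{dt}\|\partial_x^k v\|_{L^2}^2$ must be matched by those produced by $-\sum_{{\mathcal A}^k}\lambda_{j_0,j_1,j_2}\int {\mathcal I}^{*}_{j_0,j_1,j_2}$. I expect this to be the main obstacle, and I would resolve it by exhibiting a triangular structure with respect to the top derivative index: the term $2v^{p-2}\partial_x^{j_0}v\,\partial_x^{j_0+3}v\cdots$ inside ${\mathcal I}^{*}_{j_0,j_1,j_2}(v)$ reduces, after integration by parts, to a $\Xi_k$ density whose leading index is governed by $j_0$ up to strictly lower-order corrections. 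Hence the $\lambda$'s can be solved for one at a time, starting from the largest admissible $j_0$ in ${\mathcal A}^k$ and descending; the constraint $2j_0+j_1+j_2=2k-2$ defining ${\mathcal A}^k$ together with $2j_0+j_1+j_2=2k+1$ defining ${\mathcal B}^k$ guarantees that source and target indices line up so that the system is uniquely solvable.

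Once all $\Xi_k$ contributions cancel, the surviving homogeneity-$(p+2)$ densities form a combination of $\Omega_k$ densities, defining the coefficients $\nu_{j_0,\dots,j_{p+1}}$, and the homogeneity-$(2p+2)$ densities assemble into the $\Theta_k$ terms with coefficients $\mu_{j_0,\dots,j_{2p+1}}$ together with the retained ${\mathcal I}^{**}$ terms carrying coefficients $-\lambda_{j_0,j_1,j_2}$; this is precisely \eqref{ddtgkdv}. Finally, the smoothness assumption on $v$ is removed by integrating \eqref{ddtgkdv} in time and passing to general $H^k$ data through a density argument combined with the continuity and convergence statements of Proposition~\ref{Cauchy}.
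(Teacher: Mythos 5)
Your overall strategy coincides with the paper's: differentiate ${\mathcal E}_k$ along the flow, observe that $\frac{d}{dt}\|\partial_x^k v\|_{L^2}^2=-2\int \partial_x^k v\,\partial_x^{k+1}(v^{p+1})$ reduces, modulo densities in $\Omega_k$, to a linear combination of the $\int {\mathcal I}_{j_0+1,j_1+1,j_2}(v)$ indexed by ${\mathcal A}^k$ (this is the paper's Proposition~\ref{min}), and then eliminate these by choosing the $\lambda_{j_0,j_1,j_2}$ inductively with respect to the ordering $\prec$, keeping the ${\mathcal I}^{**}$ terms on the right-hand side of \eqref{ddtgkdv} via \eqref{basics}. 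The bookkeeping by homogeneity ($p+2$ with $2k+1$ derivatives versus $2p+2$ with $2k-1$ derivatives) and the final density argument to remove the smoothness assumption also match the paper.

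There is, however, a genuine gap at exactly the point you flag as the main obstacle. The triangular elimination works only if, for every $(j_0,j_1,j_2)\in{\mathcal A}^k$, the diagonal coefficient $\alpha_{j_0,j_1,j_2}$ in the reduction $\int{\mathcal I}^*_{j_0,j_1,j_2}(u)\equiv\alpha_{j_0,j_1,j_2}\int{\mathcal I}_{j_0+1,j_1+1,j_2}(u)$ modulo $\prec$-smaller $\Xi_k$ densities and $\Omega_k$ densities is \emph{nonzero}; if it vanished for even one triple, the corresponding worst density could not be cancelled and the construction would fail. You justify this by pointing to the single piece $2u^{p-2}\partial_x^{j_0}u\,\partial_x^{j_0+3}u\,\partial_x^{j_1}u\,\partial_x^{j_2}u$ of ${\mathcal I}^*_{j_0,j_1,j_2}$, but all four pieces in Definition~\ref{startstarstar} can contribute to the diagonal with coefficients of either sign: for instance when $j_0=j_1$ the piece $\int u^{p-2}(\partial_x^{j_0}u)^2\partial_x^{j_1+3}u\,\partial_x^{j_2}u$ contributes exactly as much as the one you single out, while when $j_0=j_1+1$ it contributes $-2\int{\mathcal I}_{j_0+1,j_1+1,j_2}(u)$, so a cancellation in the total $2A+B+C+(p-2)D$ must be excluded. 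This is precisely the content of the paper's Proposition~\ref{sett}, whose proof is an explicit case analysis on $\Delta(j_0,j_1,j_2)=j_0-j_1\in\{0,1,\geq 2\}$ with further subcases on $j_1,j_2$, computing $\alpha_{j_0,j_1,j_2}$ (values such as $3$, $\tfrac32$, $\tfrac32 p$, $6$, $3p$, \dots) and verifying it is never zero. Without carrying out this computation, or replacing it by a structural argument, the linear system you set up is not known to be solvable and the proof is incomplete.
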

\begin{remark}
The energy $ {\mathcal E}_k(u)$ has  leading order term $\|\partial^{k}_{x}u\|_{L^2}^2$
plus a lower order remainder that is a linear combination of densities belonging to $\Upsilon_k$, whose coefficients  
$\lambda_{j_0, j_1, j_2}$ are well prepared in such a way that
on the r.h.s. in \eqref{ddtgkdv} we get a linear combination of densities belonging to $\Theta_k$ and $\Omega_k$. In particular, densities ${\mathcal I}_{j_0,j_1,j_2}^{**}(v)$ can be expressed as linear combinations of terms belonging to $\Theta_k$ computed
along $v(t,x)$.
\end{remark}
\begin{proof}[Proof of Theorem \ref{basicA}] We prove how Proposition \ref{key} implies Theorem \ref{basicA}. We first define the quantity
\begin{equation}\label{Rku}{\mathcal R}_k(u)=\sum_{(j_0, j_1, j_2)\in {\mathcal A}^k} 
\lambda_{j_0, j_1, j_2} \int {\mathcal I}_{j_0,j_1,j_2}(u)\end{equation}
and we show \eqref{RestimateA}. 
We have by H\"older and Sobolev embedding:
\begin{align}\label{proofrepeat}
|\int (\partial_x^{j_0} u)^2 \partial_x^{j_1} u \partial_x^{j_2} u u^{p-2}|
 \leq & \|\partial_x^{j_0} u\|_{L^2}^2 \|\partial_x^{j_1} u\|_{L^\infty}  \|\partial_x^{j_2} u\|_{L^\infty} \|u\|^{p-2}_{L^\infty} %
\\\leq  C \|u\|_{H^{j_0}}^2 \|u\|_{H^{j_1+1}}  \| u\|_{H^{j_2+1}}  \|u\|^{p-2}_{H^1}
\leq & C \|u\|_{H^k}^{2\theta_0+\theta_1+\theta_2} \|u\|^{2-2\theta_0-\theta_1-\theta_2+ p}_{H^1}= C \|u\|_{H^k}^{\frac{2k-4}{k-1}}\|u\|^{p+\frac{2}{k-1}}_{H^1}
\end{align}
where we performed interpolation with $\theta_0 k + (1-\theta_0)=j_0$, $\theta_l k + (1-\theta_l)= j_l+1$ for $l=1,2$ and hence $2\theta_0+\theta_1+\theta_2={(2k-4)}/{(k-1)}$.
{ Next we define the functional 
\begin{multline}\label{gknt}
{\mathcal G}_k^T(\varphi) = \sum_{(j_0, \dots , j_{2p+1})\in {\mathcal C}^k } 
\mu_{j_0, \dots, j_{2p+1}} \int_0^T \int {\mathcal J}_{j_0, \dots , j_{2p+1}}(\Phi(\tau)\varphi)  d\tau\\ +
\sum_{(j_0, \dots , j_{p+1})\in {\mathcal D}^k } 
\nu_{j_0, \dots, j_{p+1}} \int_0^T \int {\mathcal J}_{j_0, \dots , j_{p+1}}(\Phi(\tau)\varphi)  d\tau - \sum_{(j_0, j_1, j_2)\in {\mathcal A}^k} 
\lambda_{j_0, j_1, j_2} \int_0^T \int {\mathcal I}_{j_0,j_1,j_2}^{**}(\Phi(\tau)\varphi)  d\tau \end{multline}
and we shall
prove \eqref{GestimateA}. Notice that the identity \eqref{timederA} follows after integration in time of the identity \eqref{ddtgkdv} 
provided that $\varphi$ is regular enough that all computations may be justified.
Then by a straightforward density argument we get the identity \eqref{timederA} for $\varphi\in H^k$ by adapting the proof of \eqref{GestimateA} that we give below.} By looking at the structure of the densities involved
in the definition of \eqref{gknt} we deduce \eqref{GestimateA} provided that
for any $R>0$ and $T>0$ there exists $C>0$ such that for all $\varphi\in H^k$  with $\|\varphi\|_{H^1}<R$, we have
the following bound for $v(t,x)=\Phi(t)\varphi$:
\begin{gather}\label{GestimateDA}
\Big |\int_0^T \int \prod_{l=0}^{p+1} \partial_x^{j_l} v \Big|\leq  C + C\|\varphi\|_{H^{k}}^{(\frac{2k-4}{k-1})^+}, \quad
j_{p+1}\leq \dots \leq j_{0},\,\sum_{l=0}^{p+1}j_l= 2k+1, \, j_4\geq 1\,,\\
\label{GestimateCA}
\Big|\int_0^T \int \prod_{l=0}^{2p+1} \partial_x^{j_l} v \Big|\leq  C 
+ C\|\varphi\|_{H^{k}}^{(\frac{2k-4}{k-1})^+},\quad j_{2p+1}\leq \dots \leq j_{0}, \sum_{l=0}^{2p+1}j_l\leq 2k-1,  j_2\geq 1\,.
\end{gather}
One easily checks that, by integration by parts,  for every $(j_0, j_1, j_2)\in {\mathcal A}_k$,
$\int {\mathcal I}^{**}_{j_0, j_1, j_2}(u)$ can be expressed as the integral of a linear combination of densities belonging to $\Theta_k$: if $(j_0, j_1, j_2)\neq (k-1, 0,0)$ then ${\mathcal I}^{**}_{j_0, j_1, j_2}(u)$ 
is a linear combination of densities with homogeneity $2p+2$ and the top order derivative is at most $k-1$; for $(j_0, j_1, j_2)=(k-1, 0,0)$ we have $${\mathcal I}^{**}_{k-1, 0, 0}(u)=2 \partial_x^{k} (u^{p+1}) \partial_x^{k-1} u u^p
+ p (\partial_x^{k-1} u)^2 \partial_x (u^{p+1}) u^{p-1}\,.$$ 
The second term on the r.h.s. belongs to $\Theta_k$, and the first term on the r.h.s. can be written, by using 
the Leibnitz rule to develop $\partial_x^{k} (u^{p+1})$, as a linear combination of terms belonging to $\Theta_k$ plus a multiple of $\partial_x^{k} u \partial_x^{k-1} u u^{2p}$. We conclude using $\int \partial_x^{k} u \partial_x^{k-1} u u^{2p} 
=\frac 12 \int \partial_x (\partial_x^{k-1} u)^2 u^{2p} =-p \int (\partial_x^{k-1} u)^2 \partial_x u u^{2p-1} $, and the last one is the integral of a density 
in $\Theta_k$.
Hence all terms on the r.h.s. of \eqref{ddtgkdv} can be expressed as linear combinations of terms in $\Theta_k$ and $\Omega_k$
and therefore \eqref{GestimateDA} and \eqref{GestimateCA} are enough to conclude.
\\

Next, we prove \eqref{GestimateDA} and \eqref{GestimateCA} by splitting the proof in four subcases.\\

{\em Proof of \eqref{GestimateDA}, $j_5=0$:} by H\"older inequality we get 
\begin{equation}\label{j5=0}\Big |\int_0^T \int \prod_{l=0}^{p+1} \partial_x^{j_l} v \Big |
\leq  \prod_{l=0}^4 \|\partial_x^{j_l} v\|_{L^6((0,T);L^6)} \|v\|_{L^6((0,T);L^6)} \|v\|_{L^\infty((0,T);L^\infty)}^{p-4}\end{equation}
and by Sobolev embedding, conservation of the Hamiltonian and \eqref{unifL6bOU} we proceed as follows
\begin{equation*}
(\cdots) \leq C \big (\prod_{l=0}^4 \|\varphi\|_{H^{j_l^+}} \big ) \|\varphi\|_{H^1}^{p-3} 
\leq C  \prod_{l=0}^4 ( \|\varphi\|_{H^{k}}^{\theta_l} \|\varphi\|_{H^{1}}^{1-\theta_l})  \|\varphi\|_{H^1}^{p-3}
= C\|\varphi\|_{H^{k}}^{\sum_{l=0}^4\theta_l} \|\varphi\|_{H^{1}}^{p+2-\sum_{l=0}^4 \theta_l} 
\end{equation*}
where 
$\theta_l k + (1-\theta_l)= j_l^+$ for $j=0,\dots, 4$ namely $\theta_l=\frac{j_l^+-1}{k-1}$,
and 
$$\theta_0+\theta_1+\theta_2+\theta_3+\theta_4= (\frac{2k-4 }{k-1})^+.$$
Summarizing we get \eqref{GestimateDA} by using 
the uniform a priori bound on $\|\varphi\|_{H^1}$. 
\\

{\em Proof of \eqref{GestimateDA}, $j_5\neq 0$:} we follow closely the previous argument. By H\"older inequality, Sobolev embedding, \eqref{unifL6bOU} and interpolation, we get
\begin{align}\label{refIHP}
  & \Big |\int_0^T \int \prod_{l=0}^{p+1} \partial_x^{j_l} v \Big |
\leq  \prod_{l=0}^5 \|\partial_x^{j_l} v\|_{L^6((0,T);L^6)}  \prod_{l=6}^{p+1} \|\partial_x^{j_l} v\|_{L^\infty ((0,T);L^\infty)}\\
\leq & C \prod_{l=0}^5 \|\varphi\|_{H^{j_l^+}}  \prod_{l=6}^{p+1} \|v\|_{L^\infty ((0,T); H^{j_l^++1})} \leq C \prod_{l=0}^5 \|\varphi\|_{H^{j_l^+}}  \prod_{l=6}^{p+1} \|\varphi \|_{H^{j_l^++1}}\\
\leq  & C \prod_{l=0}^5 (\|\varphi\|_{H^{1}}^{1-\theta_l} \|\varphi\|_{H^{k}}^{\theta_l})  \prod_{l=6}^{p+1} (\|\varphi\|_{H^{1}}^{1-\eta_l} \|\varphi\|_{H^{k}}^{\eta_l}) 
= C \|\varphi\|_{H^{k}}^{\sum_{l=0}^5 \theta_l +\sum_{l=6}^{p+1} \eta_l}\|\varphi\|_{H^1}^{p+2-\sum_{l=0}^5 \theta_l -\sum_{l=6}^{p+1} \eta_l}
\end{align}
where $\theta_l k + (1-\theta_l)= j_l^+$ for $l=0,\dots, 5$ and $\eta_l k + (1-\eta_l)= j_l^++1$ for $l=6,\dots, p+1$, hence
$$\sum_{l=0}^5 \theta_l +\sum_{l=6}^{p+1} \eta_l=(\frac{2k-5}{k-1})^+.$$
The estimates above along with the uniform bound assumed on $\|\varphi\|_{H^1}$ imply
\begin{equation}\label{tzp}
\Big |\int_0^T \int \prod_{l=0}^{p+1} \partial_x^{j_l} v\Big |
\leq C \|\varphi\|_{H^{k}}^{(\frac{2k-5}{k-1})^+}.\end{equation}
hence we get \eqref{GestimateDA} (indeed, in this subcase we even get better bounds as the corresponding powers are smaller
than the ones that appear in \eqref{GestimateDA}).
\\

{\em Proof of \eqref{GestimateCA}, $j_3=0$: } again, by H\"older inequality we get 
\begin{multline}\label{rmcis}\Big |\int_0^T \int \prod_{l=0}^{2p+1} \partial_x^{j_l} v \Big|
\leq \prod_{l=0}^2 \|\partial_x^{j_l} v\|_{L^4((0,T); L^4)} 
\|v\|_{L^4((0,T); L^4)} \|v\|_{L^\infty((0,T);L^\infty)}^{2p-2}\\
\leq C \prod_{l=0}^2 \|\partial_x^{j_l} v\|_{L^6((0,T); L^6)} 
\|v\|_{L^6((0,T); L^6)} \|v\|_{L^\infty((0,T); L^\infty)}^{2p-2}
\end{multline}
and by Sobolev embedding, conservation of the Hamiltonian and \eqref{unifL6bOU},
\begin{equation*}
  (\dots) \leq C \big( \prod_{l=0}^2 \|\varphi\|_{H^{j_l^+}}\big)  \|\varphi\|_{H^1}^{2p-4} 
\leq C  \prod_{l=0}^2 (\|\varphi\|_{H^{k}}^{\theta_l} \|\varphi\|_{H^{1}}^{1-\theta_l} ) \|\varphi\|_{H^1}^{2p-1}
= C \|\varphi\|_{H^{k}}^{\sum_{l=0}^2\theta_l}\|\varphi\|_{H^{1}}^{2p+2-\sum_{l=0}^2 \theta_l}
\end{equation*}
where 
$\theta_l k + (1-\theta_l)= j_l^+$ for $l=0,1,2$
and hence
$\theta_0+\theta_1+\theta_2= (\frac{2k-4 }{k-1})^+$. Combining above estimates with the uniform bound on $\|\varphi\|_{H^1}$ we get
\begin{equation}\label{ptv3}
\Big |\int_0^T \int \prod_{l=0}^{2p+1} \partial_x^{j_l} v \Big|
\leq C \|\varphi\|_{H^{k}}^{(\frac{2k-4}{k-1})^+}.\end{equation}

{\em Proof of \eqref{GestimateCA}, $j_3\neq 0$: } by H\"older inequality, Sobolev embedding, \eqref{unifL6bOU} and interpolation,
\begin{align}\label{rmsc}
\Big |\int_0^T \int \prod_{l=0}^{2p+1} \partial_x^{j_l} v \Big|
\leq & \prod_{l=0}^3 \|\partial_x^{j_l} v\|_{L^4((0,T);L^4)}  \prod_{l=4}^{2p+1} \|\partial_x^{j_l} v\|_{L^\infty ((0,T);L^\infty)} \\
\leq  & C \prod_{l=0}^3 \|\partial_x^{j_l} v\|_{L^6((0,T);L^6)}  \prod_{l=4}^{2p+1} \|\partial_x^{j_l} v\|_{L^\infty ((0,T);L^\infty)} \\
\leq & C \prod_{l=0}^3 \|\varphi\|_{H^{j_l^+}}  \prod_{l=4}^{2p+1} \|v\|_{L^\infty ((0,T); H^{j_l^++1})} \leq   C \prod_{l=0}^3 \|\varphi\|_{H^{j_l^+}}  \prod_{l=4}^{2p+1} \|\varphi \|_{H^{j_l^++1}}\\
  \leq &  C \prod_{l=0}^3 (\|\varphi\|_{H^{1}}^{1-\theta_l} \|\varphi\|_{H^{k}}^{\theta_l})  \prod_{l=4}^{2p+1} (\|\varphi\|_{H^{1}}^{1-\eta_l} \|\varphi\|_{H^{k}}^{\eta_l})\\
  \leq & C \|\varphi\|_{H^{k}}^{\sum_{l=0}^3 \theta_l +\sum_{l=4}^{2p+1} \eta_l}\|\varphi\|_{H^1}^{2p+2-\sum_{l=0}^5 \theta_l -\sum_{l=6}^{p+1} \eta_l}
\end{align}
where $\theta_l k + (1-\theta_l)= j_l^+$ for $l=0,1,2, 3$ and $\eta_l k + (1-\eta_l)= j_l^++1$ for $l=4,\dots, 2p+1$, then 
$\sum_{l=0}^3 \theta_l +\sum_{l=4}^{2p+1} \eta_l=(\frac{2k-5}{k-1})^+.$ Combining above estimates with the uniform bound on $\|\varphi\|_{H^1}$,
\begin{equation}\label{ptv10} \Big |\int_0^T \int \prod_{l=0}^{2p+1} \partial_x^{j_l} v \Big | \leq C \|\varphi\|_{H^{k}}^{(\frac{2k-5}{k-1})^+}.
\end{equation}
\end{proof}

\subsection{Proof of  Proposition \ref{key} }
We first define an ordering on the sets
${\mathcal A}^k$ and ${\mathcal B}^k$.
\begin{definition}
Given $(i_0, i_1, i_2)$, $(l_0, l_1, l_2)\in {\mathcal A}^k$ or ${\mathcal B}^k$,
we define
\begin{align*}
  (i_0, i_1, i_2)\prec & (l_0, l_1, l_2) \iff \begin{cases} \text{ either } i_0<l_0  \\ \text{ or }
i_0=l_0, i_1<l_1\end{cases}\\
(i_0, i_1, i_2) \preceq & (l_0, l_1, l_2) \iff \begin{cases} \text{ either } (i_0, i_1, i_2)\prec (l_0, l_1, l_2)  \\ \text{ or }
  (i_0, i_1, i_2)=(l_0, l_1, l_2)\end{cases}
  \end{align*}
For any given $(\bar j_0, \bar j_1, \bar j_2)\in {\mathcal A}_k$ we define
$${\mathcal A}_k^{\prec (\bar j_0,  \bar j_1,  \bar j_2)}=\big \{(j_0,  j_1,  j_2)\in {\mathcal A}_k | (j_0,  j_1,  j_2)\prec (\bar j_0,  \bar j_1,  \bar j_2)\big \}$$
as well as ${\mathcal A}_k^{\preceq (\bar j_0,  \bar j_1,  \bar j_2)}$, ${\mathcal A}_k^{\succ (\bar j_0,  \bar j_1,  \bar j_2)}$, ${\mathcal A}_k^{\succeq(\bar j_0,  \bar j_1,  \bar j_2)}$ by modifying the order accordingly.
\end{definition}
The next propositions will be of importance:
\begin{proposition}\label{min}
For every $(j_0, j_1, j_2)\in {\mathcal A}^k$ 
there exists $\delta_{j_0, j_1, j_2}\in \R$ and for every $(j_0, \dots , j_{p+1})\in {\mathcal D}^k $ 
there exists $\gamma_{j_0, \dots, j_{p+1}}\in \R$ such that
\begin{equation*}\label{quadraticprime}
\int \partial_x^k u \partial_x^{k+1} (u^{p+1})= \sum_{(j_0, j_1, j_2)\in {\mathcal A}^k} 
\delta_{j_0, j_1, j_2} \int {\mathcal I}_{j_0+1,j_1+1,j_2}(u)
 +
\sum_{(j_0, \dots , j_{p+1})\in {\mathcal D}^k } 
\gamma_{j_0, \dots, j_{p+1}} \int {\mathcal J}_{j_0, \dots , j_{p+1}}(u).
\end{equation*}
\end{proposition}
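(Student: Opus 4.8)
The claimed identity is purely algebraic, valid for any smooth periodic $u$, so the plan is to prove it by a Leibniz expansion followed by repeated integration by parts. First I would expand
\[
\partial_x^{k+1}(u^{p+1})=\sum_{b_0+\cdots+b_p=k+1}\frac{(k+1)!}{b_0!\cdots b_p!}\prod_{i=0}^{p}\partial_x^{b_i}u,
\]
so that the left-hand side $\int\partial_x^k u\,\partial_x^{k+1}(u^{p+1})$ becomes a finite linear combination, with positive coefficients, of integrals of monomials $\partial_x^k u\prod_{i=0}^p\partial_x^{b_i}u$. Each such monomial is a ${\mathcal J}$-density of homogeneity $p+2$ whose $p+2$ derivative orders sum to $k+(k+1)=2k+1$; this matches exactly the homogeneity and the order constraint defining ${\mathcal D}^k$.

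For each monomial I would reorder its orders decreasingly as $j_0\geq\cdots\geq j_{p+1}$ and split according to how many factors are actually differentiated. If at least five are (equivalently $j_4\geq 1$), the monomial already lies in ${\mathcal D}^k$ and is absorbed into the $\gamma$-sum. Otherwise at most four factors carry derivatives, and after collecting the undifferentiated copies of $u$ into a power the monomial reads $u^{p-2}\,\partial_x^{m_0}u\,\partial_x^{m_1}u\,\partial_x^{m_2}u\,\partial_x^{m_3}u$ with $m_0\geq m_1\geq m_2\geq m_3\geq 0$ and $m_0+m_1+m_2+m_3=2k+1$. These are the terms that must be converted into the square-type densities ${\mathcal I}_{j_0+1,j_1+1,j_2}$, and it is here that parity helps: since $2k+1$ is odd, the four top orders cannot all coincide.

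The conversion I would carry out by induction on the top order $m_0$, following the ordering $\prec$ on ${\mathcal A}^k$. If $m_0=m_1$ the monomial already carries a square $(\partial_x^{m_0}u)^2$; parity then forces $m_2>m_3$, so setting $(j_0,j_1,j_2)=(m_0-1,m_2-1,m_3)$ one checks $j_2\leq j_1\leq j_0$ and $2j_0+j_1+j_2=2k-2$, i.e. $(j_0,j_1,j_2)\in{\mathcal A}^k$, and the monomial equals $\int{\mathcal I}_{j_0+1,j_1+1,j_2}$. If $m_0>m_1$, I would symmetrize the two highest factors by integrating by parts in $\partial_x^{m_0}u=\partial_x(\partial_x^{m_0-1}u)$: the piece in which the stray derivative lands on $u^{p-2}$ produces an extra differentiated factor $\partial_x u$ and hence pushes the term toward (or into) ${\mathcal D}^k$, while the pieces in which it lands on one of the remaining factors either create a square (when $m_0=m_1+1$, after solving for the original monomial, which reappears with a definite coefficient on the right) or strictly lower the top order to $m_0-1$, so that the induction hypothesis applies.

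The main obstacle is precisely the bookkeeping of this recursion. One must verify that every remainder generated along the way is either a genuine ${\mathcal D}^k$-density (five or more differentiated factors, total order $2k+1$, homogeneity $p+2$) or a monomial strictly smaller in the chosen well-order, so that the procedure terminates after finitely many steps; and that each square actually produced carries an index $(j_0,j_1,j_2)$ satisfying the ordering $j_2\leq j_1\leq j_0$ together with the Diophantine constraint $2j_0+j_1+j_2=2k-2$ of ${\mathcal A}^k$. The parity of $2k+1$ is used repeatedly to exclude the degenerate configurations (all four orders equal, or a square forming with equal residual orders) that would fall outside ${\mathcal A}^k$, while the symmetry of ${\mathcal I}_{j_0,j_1,j_2}$ under $j_1\leftrightarrow j_2$ brings each square to canonical form. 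Collecting the coefficients accumulated along this finite recursion yields the asserted $\delta_{j_0,j_1,j_2}$ and $\gamma_{j_0,\dots,j_{p+1}}$.
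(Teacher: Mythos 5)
Your proposal is correct and follows essentially the same route as the paper: expand $\partial_x^{k+1}(u^{p+1})$ by Leibniz, absorb every monomial with at least five differentiated factors into the ${\mathcal D}^k$-sum, and reduce the remaining four-factor densities to squares ${\mathcal I}_{j_0+1,j_1+1,j_2}$ by repeatedly integrating by parts to shrink the gap between the two highest derivative orders (the paper descends on $\Delta=j_0-j_1$ until $\Delta\in\{0,1\}$, you descend on the top order; the parity of $2k+1$ rules out the degenerate equal-order configurations in both versions). The bookkeeping you flag as the remaining obstacle is exactly what the paper also leaves to a finite iteration argument, so no gap beyond the paper's own level of detail.
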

\begin{proof}
The map $(j_0, j_1, j_2)\rightarrow (j_0+1, j_1+1, j_2)$ is one to one from $\mathcal A_k$ to $\mathcal B_k$. hence the statement is equivalent to
\begin{equation}\label{quadratic}
\int \partial_x^k u \partial_x^{k+1} (u^{p+1})= \sum_{(j_0, j_1, j_2)\in {\mathcal B}^k} 
\delta_{j_0, j_1, j_2} \int {\mathcal I}_{j_0,j_1,j_2}(u)
 +
\sum_{(j_0, \dots , j_{p+1})\in {\mathcal D}^k } 
\gamma_{j_0, \dots, j_{p+1}} \int {\mathcal J}_{j_0, \dots , j_{p+1}}(u)\,.
\end{equation}
Expanding the $(k+1)$ derivative on the l.h.s. by chain rule, we get integrals of linear combinations of the following densities:
$$\prod_{\substack{j_0\geq j_1\geq \dots \geq j_{p+1}\geq 0\\j_0+\dots +j_{p+1}=2k+1}} \partial_x^{j_l} u.$$
We shall denote by l.o.t. all the densities belonging to $\Omega_k$, namely densities that can be absorbed in the 
second term at the r.h.s. in \eqref{quadratic}. Hence we are reduced to considering only terms like
\begin{equation}\label{termsimp}
\int \partial_x^{j_0} u  \partial_x^{j_1} u \partial_x^{j_2}u \partial_x^{j_3} u u^{p-2}, \quad j_0+j_1+j_2+j_{3}=2k+1,\\
j_0\geq j_1\geq j_2\geq j_3
\end{equation}
and we will prove that, up to l.o.t., they can be expressed as linear combination of terms
$$
\int {\mathcal I}_{i_0,i_1,i_2}(u), \quad (i_0, i_1, i_2)\in {\mathcal B}^k\,.
$$
For every density defined in \eqref{termsimp} we also define $\Delta(j_0, j_1, j_2, j_3)=j_0-j_1$, so that by integration by parts we are done if 
$\Delta(j_0, j_1, j_2, j_3)\in \{0,1\}$. We claim that if $\Delta(j_0, j_1, j_2, j_3)>1$ then,
up to l.o.t., we can express terms in \eqref{termsimp} as linear combination of densities such that $\Delta(j_0, j_1, j_2, j_3)\in \{0, 1\}$.
In order to do so, assume that we have a density like \eqref{termsimp} such that $\Delta(j_0, j_1, j_2, j_3) >1$. Then by integration by parts, more precisely by moving one derivative from the higher order derivative 
$\partial_x^{j_0} u$ on the other factors we get, up to l.o.t., a linear combination of terms 
\begin{multline*}
\int \partial_x^{\tilde j_0} u  \partial_x^{\tilde j_1} u \partial_x^{\tilde j_2}u \partial_x^{\tilde j_3} u u^{p-2}, \quad \tilde j_0+
\tilde j_1+\tilde j_2+\tilde j_{3}=2k+1, \quad \tilde j_0\geq \tilde j_1\geq \tilde j_2\geq \tilde j_3\\
\Delta(\tilde j_0, \tilde j_1, \tilde j_2, \tilde j_3)\in \{\Delta(j_0, j_1, j_2, j_3)-1, \Delta(j_0, j_1, j_2, j_3)-2\}.\end{multline*}
We can further iterate this argument, and conclude after a finite number of steps.
\end{proof}
Our next proposition will be used along the proof of Proposition \ref{key} and we postpone its proof for now.
\begin{proposition}\label{sett}
Let $(j_0, j_1, j_2)\in {\mathcal A}^k$, there exists $\alpha_{j_0, j_1, j_2}\in \R\setminus\{0\}$ such that
\begin{multline}\label{17}
\int {\mathcal I}^*_{j_0, j_1, j_2}(u)= \alpha_{j_0, j_1, j_2} \int {\mathcal I}_{j_0+1, j_1+1, j_2}(u)
+\sum_{\substack{(i_0,i_1 , i_{2})\in {\mathcal B}^k\\ (i_0, i_1, i_2)\prec 
(j_0+1, j_1+1, j_2) }} 
\eta_{i_0, i_1, i_{2}} \int {\mathcal I}_{i_0, i_1, i_{2}}(u) 
\\
+ \sum_{(j_0, \dots , j_{p+1})\in {\mathcal D}^k } 
\varepsilon_{j_0, \dots, j_{p+1}} \int {\mathcal J}_{j_0, \dots , j_{p+1}}(u)
\end{multline}
where $\varepsilon_{j_0, \dots, j_{p+1}}, \eta_{i_0, i_1, i_{2}}\in \R$.
\end{proposition}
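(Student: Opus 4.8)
The plan is to reduce $\int {\mathcal I}^*_{j_0,j_1,j_2}(u)$ by systematic integration by parts, following exactly the balancing mechanism already used in the proof of Proposition~\ref{min}, and then to read off the $\prec$-maximal balanced density together with its coefficient. Throughout, every summand of ${\mathcal I}^*_{j_0,j_1,j_2}$ is homogeneous of degree $p+2$ and carries $2k+1$ derivatives, matching ${\mathcal B}^k$ and ${\mathcal D}^k$; the task is purely to sort these densities into the balanced form $u^{p-2}(\partial_x^{i_0}u)^2\partial_x^{i_1}u\partial_x^{i_2}u$ of the $\eta$-sum, the leading $\alpha$-term, and the spread ${\mathcal D}^k$-densities of the $\varepsilon$-sum.

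First I would dispose of the first summand $(p-2)u^{p-3}\partial_x^3 u\,(\partial_x^{j_0}u)^2\partial_x^{j_1}u\partial_x^{j_2}u$. To create a balanced pair of index $j_0+1$ one needs two derivative factors whose orders sum to $2j_0+2$; this is impossible from the order multiset $\{3,j_0,j_0,j_1,j_2\}$ once $j_0\ge 2$, so for $k\ge 3$ this summand never reaches the top density ${\mathcal I}_{j_0+1,j_1+1,j_2}$. After balancing it contributes only to the $\eta$-sum (densities with top index $\le j_0$) and to the $\varepsilon$-sum, where the factor $\partial_x^3 u$, or a derivative falling on $u^{p-3}$, forces at least five derivative-bearing factors, i.e. a ${\mathcal D}^k$-density. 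The finitely many small-$k$ configurations in which $\partial_x^3 u$ can pair up to order $j_0+1$ (e.g. $k=2$, $(j_0,j_1,j_2)=(1,0,0)$) are checked by hand.

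Next I would run the balancing reduction on the three genuinely unbalanced summands, each with a single high factor $\partial_x^{j_0+3}u$, $\partial_x^{j_1+3}u$, $\partial_x^{j_2+3}u$. Moving derivatives off the highest factor by repeated integration by parts, each step either (i) deposits the shed derivative on $u^{p-2}$ or on a new factor, producing a ${\mathcal D}^k$-density absorbed into the $\varepsilon$-sum, or (ii) lowers by two the gap between the two top factors; when the gap reaches zero one lands on an ${\mathcal I}_{i_0,i_1,i_2}$ with $(i_0,i_1,i_2)\in{\mathcal B}^k$, and when the gap is one I use $\partial_x^a u\,\partial_x^{a+1}u=\tfrac12\partial_x((\partial_x^a u)^2)$ and integrate by parts once more, again spreading the loose derivative. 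Tracking the lexicographic order $\prec$, every balanced density produced satisfies $(i_0,i_1,i_2)\preceq(j_0+1,j_1+1,j_2)$, and the $\prec$-maximal one is ${\mathcal I}_{j_0+1,j_1+1,j_2}$, coming from the summand $2u^{p-2}\partial_x^{j_0}u\,\partial_x^{j_0+3}u\,\partial_x^{j_1}u\partial_x^{j_2}u$ (and, in degenerate configurations such as $j_0=j_1+1$ or $j_1=j_2$, receiving further contributions from the other two summands).

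The main obstacle is proving that the coefficient $\alpha_{j_0,j_1,j_2}$ is nonzero, since in the degenerate configurations the third and fourth summands, and secondary balancing branches, feed the top density with signed contributions that could a priori cancel. To control this cleanly I would extract $\alpha_{j_0,j_1,j_2}$ at the level of the leading symbol: by Remark~\ref{starstarstar}, $\int {\mathcal I}^*_{j_0,j_1,j_2}(u)=\frac{d}{dt}\big|_{t=0}\int {\mathcal I}_{j_0,j_1,j_2}(e^{t\partial_x^3}u)$, and in Fourier variables $\frac{d}{dt}\big|_{0}$ multiplies each interaction by $-i\sum_l n_l^3$ under the constraint $\sum_l n_l=0$. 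Matching, in this symmetric symbol, the monomial with exponent multiset $\{j_0+1,j_0+1,j_1+1,j_2\}$ while discarding the monomials supported on at least five frequencies (the ${\mathcal D}^k$-part) isolates $\alpha_{j_0,j_1,j_2}$ as an explicit combinatorial quantity, which one checks does not vanish; the subtle point to handle is that the constraint $\sum_l n_l=0$ can convert four-frequency monomials into lower ones, so the matching must be organized modulo this relation. This triangular structure, with nonvanishing diagonal coefficient $\alpha_{j_0,j_1,j_2}$, is exactly what will make the coefficients $\lambda_{j_0,j_1,j_2}$ solvable in the proof of Proposition~\ref{key}.
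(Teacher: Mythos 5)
Your overall strategy (reduce by integration by parts, exploit triangularity with respect to $\prec$, and isolate the coefficient of the top balanced density) is the same as the paper's, and your Fourier-side idea for extracting $\alpha_{j_0,j_1,j_2}$ --- using $\sum_l n_l^3=3e_3(n)$ on the hyperplane $\sum_l n_l=0$ and matching the exponent multiset $\{j_0+1,j_0+1,j_1+1,j_2\}$ --- is an attractive reformulation that does reproduce the paper's generic values (it gives $3$ generically, $6$ when $j_1=j_2>0$, $3p$ when $j_1=j_2=0$, $12$ when $j_0=j_1=j_2$). But there is a genuine gap: the nonvanishing of $\alpha_{j_0,j_1,j_2}$ is the entire content of the proposition, and you do not establish it --- you correctly flag that the reduction must be organized modulo the relation $\sum_l n_l=0$, which is precisely where all the work lies, and then defer it to ``an explicit combinatorial quantity, which one checks does not vanish.'' Because the relation converts monomials between the four-frequency and five-frequency classes and rebalances four-frequency monomials into one another, the naive coefficient matching is not well defined, and a priori the degenerate configurations could produce cancellation. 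The paper resolves exactly this by an exhaustive case analysis on $\Delta(j_0,j_1,j_2)=j_0-j_1\in\{0,1,\geq 2\}$ with sub-cases according to $j_1=j_2$, $j_2=0$, $j_1=0$, tracking every integration by parts and summing the contributions of the four pieces $2A+B+C+(p-2)D$ of ${\mathcal I}^*_{j_0,j_1,j_2}$ to the top density; the outcome $\alpha\in\{3,6,3p,12\}$ is only visible at the end of that computation.

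A second, more localized problem is your dichotomy (i)/(ii) in the reduction step. Shedding a derivative onto $u^{p-2}$ or onto an undifferentiated factor does \emph{not} in general produce a ${\mathcal D}^k$-density: membership in ${\mathcal D}^k$ requires at least five factors carrying a derivative, so when $j_1=j_2=0$ (e.g.\ the term $2u^{p-2}\partial_x^{j_0}u\,\partial_x^{j_0+3}u\,u\,u$) such a deposition yields only three differentiated factors. These terms must be reduced further and they do feed the top density --- they are exactly the source of the $p$-dependence in $\alpha=3p$ and of the contributions of $C$ and $D$ in the $\Delta\in\{0,1\}$ cases. So contributions of type (i) cannot be discarded into the $\varepsilon$-sum wholesale; they must be tracked, which again forces the case analysis you are trying to avoid.
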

\begin{proof}[Proof of Proposition \ref{key}] Propositions \ref{min} and \ref{sett} will imply Proposition \ref{key}. If $v(t,x)$ is solution to \eqref{gKdV} then
\begin{equation*}
\frac d{dt} \|\partial^{k}_{x}v\|_{L^2}^2=2 \int \partial_t \partial_x^k v \partial_x^k v
= 2\int \partial_x^{k+3} v \partial_x^k v - 2 \int \partial_x^{k+1} (v^{p+1}) \partial_x^k v=-2 \int \partial_x^{k+1} (v^{p+1}) \partial_x^k v.
\end{equation*}
hence by Proposition~\ref{min}
\begin{equation}\label{quarda}
\frac d{dt} \|\partial^{k}_{x}v\|_{L^2}^2=  \sum_{(j_0, j_1, j_2)\in {\mathcal A}^k} 
\beta_{j_0, j_1, j_2} \int {\mathcal I}_{j_0+1,j_1+1,j_2}(v) \\
 + \text{l.o.t.}\,,
 \end{equation}
where $\beta_{j_0, j_1, j_2}=-2 \delta_{j_0, j_1, j_2}$ and l.o.t. denotes the integral of a linear combination of densities belonging to $\Theta_k$ and $\Omega_k$.
where $\alpha_{j_0, j_1, j_2}\in \R$. Next we prove hat we can select  the coefficients $\lambda_{j_0, j_1, j_2}$ 
in such a way that
\begin{equation}
  \label{minsett}
  \sum_{(j_0, j_1, j_2)\in {\mathcal A}_k} \beta_{j_0, j_1, j_2}  \int {\mathcal I}_{j_0+1, j_1+1, j_2}(v)
  - \sum_{(j_0, j_1, j_2)\in {\mathcal A}_k} \lambda_{j_0, j_1, j_2} \int {\mathcal I}_{j_0, j_1, j_2}^*(v)=\text{l.o.t}
\end{equation}
where again l.o.t. denotes the integral of a linear combination of densities belonging to $\Theta_k$ and $\Omega_k$.
Of course \eqref{minsett} allows us to conclude the proof thanks to \eqref{basics} and by recalling that terms $\int {\mathcal I}^{**}_{j_0, j_1, j_2}(v) dx$
can be absorbed in l.o.t. (more precisely those densities are a linear combination of terms belonging to $\Theta_k$).

We proceed with \eqref{minsett}: we claim that for every $(j_0, j_1,  j_2)\in {\mathcal A}_k$ we can select a real number $\lambda_{j_0, j_1, j_2}$ such that the following holds: for any given $(\bar j_0,  \bar j_1,  \bar j_2)\in {\mathcal A}_k$ there exists a function
$$\rho^{(\bar j_0,  \bar j_1,  \bar j_2)}: {\mathcal A}_k^{\prec (\bar j_0,  \bar j_1,  \bar j_2)}\ni (j_0,  j_1,  j_2)\rightarrow \rho_{j_0, j_1, j_2}^{(\bar j_0,  \bar j_1,  \bar j_2)}\in \R$$
such that
\begin{multline}\label{part}
\sum_{(j_0, j_1, j_2)\in {\mathcal A}_k^{\succeq (\bar j_0,  \bar j_1,  \bar j_2)}}   \int\big( \beta_{j_0, j_1, j_2} {\mathcal I}_{j_0+1, j_1+1, j_2}(u) - \lambda_{j_0, j_1, j_2} {\mathcal I}_{j_0, j_1, j_2}^*(u)\big)\\=
\sum_{(j_0, j_1, j_2)\in {\mathcal A}_k^{\prec (\bar j_0,  \bar j_1,  \bar j_2)}} \rho_{j_0, j_1, j_2}^{(\bar j_0,  \bar j_1,  \bar j_2)}  \int {\mathcal I}_{j_0+1, j_1+1, j_2}(u)+ \text{l.o.t.}\,.
\end{multline}
If we prove the claim then \eqref{minsett} holds by choosing
$(\bar j_0,  \bar j_1,  \bar j_2)\in {\mathcal A}_k$ such that, for all $(j_0,  j_1,  j_2)\in {\mathcal A}_k$,
$(\bar j_0,  \bar j_1,  \bar j_2)\preceq (j_0,  j_1,  j_2)$.

We first select $\lambda_{(k-1, 0, 0)}$ where $(k-1, 0,0)=\max \big \{(j_0, j_1, j_2)\in {\mathcal A}_k\big \}$; in a second step we define $\lambda_{(j_0', j_1', j_2')}$ where $(j_0', j_1',j_2')=\max \big \{(j_0, j_1, j_2)\in {\mathcal A}_k^{\prec (k-1,0,0)}\big \}$. Then we proceed in a similar way until we get to defining  $\lambda_{(j_0{''}, j_1{''}, j_2{''})}$
where $(j_0{''}, j_1{''},j_2{''})=\min \big \{(j_0, j_1, j_2)\in {\mathcal A}_k\big \}$.
We start by choosing $\lambda_{k-1, 0,0}$.
Notice that by \eqref{17}, if we impose the condition $\lambda_{k-1, 0, 0}\alpha_{k-1, 0, 0}=\beta_{k-1, 0, 0} $, then we get
\begin{equation*}\beta_{k-1, 0, 0}  \int {\mathcal I}_{k, 1, 0}(u)-\lambda_{k-1, 0, 0} \int {\mathcal I}_{k-1, 0, 0}^*(u)
=
\sum_{(j_0, j_1, j_2)\in {\mathcal A}_k^{\prec (k-1,  0,  0)}}  \rho^{(k-1,0,0)}_{j_0, j_1, j_2}  \int {\mathcal I}_{j_0+1, j_1+1, j_2}(u)+\text{l.o.t.}\end{equation*}
Assume that we have selected $\lambda_{j_0, j_1, j_2}$ for every 
$(j_0,  j_1,  j_2)\succeq (\bar j_0, \bar j_1, \bar j_2)$ and assume that \eqref{part} holds. In the next step we select
$\lambda_{\tilde j_0, \tilde j_1, \tilde j_2}$ where $(\tilde j_0, \tilde j_1, \tilde j_2)=\max\big \{(j_0, j_1, j_2)\in {\mathcal A}_k^{\prec (\bar j_0, \bar j_1, \bar j_2)}\big \}$. Then by using the same construction as above we can select
$\lambda_{\tilde j_0, \tilde j_1, \tilde j_2}$, with the condition
$\lambda_{\tilde j_0, \tilde j_1, \tilde j_2}\alpha_{\tilde j_0, \tilde j_1, \tilde j_2}=\rho^{(\bar j_0, \bar j_1, \bar j_2)}_{\tilde j_0, \tilde j_1, \tilde j_2}$. This concludes the proof.
\end{proof}

\begin{proof}[Proof of Proposition \ref{sett}]
To streamline our proof, for a given density $\mathcal D(u)$, we say that
$\int {\mathcal D}(u)\equiv 0$ if and only if ${\mathcal D}(u)$ is
a linear combination of densities in $\Omega_k$ and densities $\mathcal I_{i_0, i_1, i_2}(u)\in \Xi_k$ such that $(i_0, i_1, i_2)\prec 
(j_0+1, j_1+1, j_2)$. If moreover ${\mathcal D}_1(u)$ and ${\mathcal D}_2(u)$ are two densities
then we say $\int {\mathcal D}_1(u)\equiv \int {\mathcal D}_2(u)$ if and only if
$\int ({\mathcal D}_1(u)  - {\mathcal D}_2(u))\equiv 0$. Hence we aim at proving
$\int {\mathcal I}^*_{j_0, j_1, j_2}(u)\equiv \alpha_{j_0, j_1, j_2} \int {\mathcal I}_{j_0+1, j_1+1, j_2}(u)$ where 
$\alpha_{j_0, j_1, j_2} \neq 0$.

We may write 
\begin{equation}\label{ABCDprime}
{\mathcal I}^*_{j_0, j_1, j_2}(u)=2A + B+ C + (p-2) D\end{equation}
with
\begin{equation}\label{ABCD}
  \begin{split}
      A= & \int u^{p-2}  \partial_x^{j_0+3} u \partial_x^{j_0} u \partial_x^{j_1} u  \partial_x^{j_2} u, \quad
B=  \int u^{p-2}  (\partial_x^{j_0} u)^2 \partial_x^{j_1+3} u  \partial_x^{j_2} u ,\\
C= & \int u^{p-2}  (\partial_x^{j_0} u)^2 \partial_x^{j_1} u  \partial_x^{j_2+3} u, \quad
D= \int u^{p-3}  \partial_x^3 u (\partial_x^{j_0} u)^2 \partial_x^{j_1} u  \partial_x^{j_2} u.
  \end{split}
\end{equation}
We split the proof in three cases, each with subcases. Define $\Delta(j_0, j_1, j_2)=j_0-j_1$.

{\bf First case: $\Delta(j_0, j_1, j_2)\geq 2$:} first we prove that
\begin{equation}\label{gui}
B\equiv C\equiv D\equiv 0,\end{equation}
The condition $\Delta(j_0, j_1, j_2)\geq 2$ is equivalent to
$j_1+3\leq j_0+1$, hence we get $B\equiv 0$ as at most one derivative involved in the density of $B$ is of order $j_0+1$ and all the remaining factors have less derivatives.
By a similar argument we get $C\equiv 0$. Regarding $D$,  $\Delta(j_0, j_1, j_2)\geq 2$ implies
$j_0\geq 2$ and hence  $D\equiv 0$ as again at most one derivative involved in the density of $D$ can be of order $j_0+1$ and all the other factors involve less derivatives.

In order to deal with $A$ notice that by integration by parts we can move one derivative from $\partial_x^{j_0+3} u$
to the other factors in the density of $A$:
\begin{multline}\label{firstAprimo}A
=-\int (u^{p-2}  \partial_x^{j_0+2} u \partial_x^{j_0+1} u \partial_x^{j_1} u  \partial_x^{j_2} u 
+ u^{p-2}  \partial_x^{j_0+2} u \partial_x^{j_0} u \partial_x^{j_1+1} u  \partial_x^{j_2} u 
+ u^{p-2}  \partial_x^{j_0+2} u \partial_x^{j_0} u \partial_x^{j_1} u  \partial_x^{j_2+1} u )
\\-(p-2) \int u^{p-3}  \partial_x u \partial_x^{j_0+2} u \partial_x^{j_0} u \partial_x^{j_1} u  \partial_x^{j_2} u 
=(I_A+II_A+III_A)+IV_A.
\end{multline}
As $I_A=-\frac12 \int u^{p-2} \partial_x
(( \partial_x^{j_0+1} u)^2) \partial_x^{j_1} u  \partial_x^{j_2} u$, by integration by part
\begin{multline*}
I_A=\frac 12 \int u^{p-2}  (\partial_x^{j_0+1} u)^2 \partial_x^{j_1+1} u  \partial_x^{j_2} u 
+\frac 12 \int u^{p-2}  (\partial_x^{j_0+1} u)^2 \partial_x^{j_1} u  \partial_x^{j_2+1} u 
\\+\frac{p-2} 2\int u^{p-3}\partial_x u(\partial_x^{j_0+1} u)^2 \partial_x^{j_1} u  \partial_x^{j_2} u .
\end{multline*}
For $II_A$, again by integration by parts we move one derivative from factor $\partial_x^{j_0+2} u$ on the others:
\begin{multline*}
II_A=\int u^{p-2}  (\partial_x^{j_0+1} u)^2 \partial_x^{j_1+1} u  \partial_x^{j_2} u 
+\underbrace{\int u^{p-2}  \partial_x^{j_0+1} u \partial_x^{j_0} u \partial_x^{j_1+2} u  \partial_x^{j_2} u}_{\equiv 0}\\
+\underbrace{ \int u^{p-2}  \partial_x^{j_0+1} u \partial_x^{j_0} u \partial_x^{j_1+1} u  \partial_x^{j_2+1} u}_{\equiv 0}+(p-2)
\underbrace{\int u^{p-3} \partial_x u \partial_x^{j_0+1} u \partial_x^{j_0} u \partial_x^{j_1+1} u  \partial_x^{j_2} u}_{\equiv 0}.
\end{multline*}
Indeed, the second, third and fourth terms on the r.h.s. are equivalent to zero
as, with $\Delta(j_0, j_1, j_2)\geq 2$, in the corresponding densities we have at most one derivative of order $j_0+1$ and all the others are lower order.

 Similarly for the third term $III_A$ we get by integration by parts
\begin{multline*}
III_A=\int u^{p-2}  (\partial_x^{j_0+1} u)^2\partial_x^{j_1} u  \partial_x^{j_2+1} u+\underbrace{\int u^{p-2}  \partial_x^{j_0+1} u \partial_x^{j_0} u \partial_x^{j_1+1} u  \partial_x^{j_2+1} u}_{\equiv 0}\\
+
\underbrace{\int u^{p-2}  \partial_x^{j_0+1} u \partial_x^{j_0} u \partial_x^{j_1} u  \partial_x^{j_2+2} u}_{\equiv 0}
+(p-2) \underbrace{ \int u^{p-3} \partial_x u \partial_x^{j_0+1} u \partial_x^{j_0} u \partial_x^{j_1} u  \partial_x^{j_2+1} u}_{\equiv 0}
\end{multline*}
where similarly to $II_A$ we get that second, third and fourth term on the r.h.s. are equivalent to zero from $\Delta(j_0, j_1, j_2)\geq 2$.

Summarizing we get from \eqref{firstAprimo} the following equivalence:
\begin{multline}\label{icamo}A\equiv \frac 32 \int u^{p-2}  (\partial_x^{j_0+1} u)^2 \partial_x^{j_1+1} u  \partial_x^{j_2} u 
+\frac 32 \int u^{p-2}  (\partial_x^{j_0+1} u)^2 \partial_x^{j_1} u  \partial_x^{j_2+1} u \\+\frac{p-2} 2\int u^{p-3}\partial_x u(\partial_x^{j_0+1} u)^2 \partial_x^{j_1} u  \partial_x^{j_2} u
-(p-2) \int u^{p-3}  \partial_x u \partial_x^{j_0+2} u \partial_x^{j_0} u \partial_x^{j_1} u  \partial_x^{j_2} u.
\end{multline}
Notice that we no integration by parts was performed  on $IV_A$ as its contribution will depend on the value of $j_2$. Next we consider two subcases.

{\em First subcase: $\Delta(j_0, j_1, j_2)\geq 2$, $j_2>0$:} we have $\int u^{p-3}  \partial_x u \partial_x^{j_0+2} u \partial_x^{j_0} u \partial_x^{j_1} u  \partial_x^{j_2} u\equiv 0$ as the density under the integral has five factors with a nontrivial derivative. Similarly we have
$\int u^{p-3}\partial_x u(\partial_x^{j_0+1} u)^2 \partial_x^{j_1} u  \partial_x^{j_2} u\equiv 0$, hence by \eqref{icamo} we get
\begin{equation*}A\equiv \frac 32 \int u^{p-2}  (\partial_x^{j_0+1} u)^2 \partial_x^{j_1+1} u  \partial_x^{j_2} u
\\+\frac 32 \int u^{p-2}  (\partial_x^{j_0+1} u)^2 \partial_x^{j_1} u  \partial_x^{j_2+1} u .
\end{equation*}
Summarizing we obtain that, for $j_1=j_2$,
\begin{equation}\label{ddd}A\equiv 3 \int {\mathcal I}_{j_0+1, j_1+1, j_2}(u)
\end{equation}
while for  $j_1>j_2$,
\begin{equation}\label{aaa}A\equiv \frac 32 \int {\mathcal I}_{j_0+1, j_1+1, j_2}(u)\,.
\end{equation}
In any case by \eqref{ddd}, \eqref{aaa}, \eqref{gui} and \eqref{ABCDprime} we get the desired conclusion in this subcase.

{\em Second subcase: $\Delta(j_0, j_1, j_2)\geq 2$, $j_2=0$:} by \eqref{icamo}, we get
\begin{multline}\label{secondAsec}A\equiv \frac 32 \int u^{p-1}  (\partial_x^{j_0+1} u)^2 \partial_x^{j_1+1} u  
+\frac 32 \int u^{p-2}  (\partial_x^{j_0+1} u)^2 \partial_x^{j_1} u  \partial_x u \\ +\frac{p-2} 2\int u^{p-2}\partial_x u(\partial_x^{j_0+1} u)^2 \partial_x^{j_1} u  
-(p-2) \int u^{p-2}  \partial_x u \partial_x^{j_0+2} u \partial_x^{j_0} u \partial_x^{j_1} u \\
=I_A+II_A+III_A+IV_A.
\end{multline}
For the term $IV_A$, we move one derivative from factor $\partial_x^{j_0+2} u$ on other factors,
\begin{multline*}
IV_A=
(p-2) \int u^{p-2}  \partial_x u (\partial_x^{j_0+1} u)^2 \partial_x^{j_1} u 
+(p-2) \underbrace{ \int u^{p-2}  \partial_x u \partial_x^{j_0+1} u \partial_x^{j_0} u \partial_x^{j_1+1} u }_{\equiv 0}\\
+(p-2) \underbrace{ \int u^{p-2}  \partial_x^2 u \partial_x^{j_0+1} u \partial_x^{j_0} u \partial_x^{j_1} u }_{\equiv 0}
+(p-2)^2 \underbrace{\int u^{p-3}  (\partial_x u)^2 \partial_x^{j_0+1} u \partial_x^{j_0} u \partial_x^{j_1} u }_{\equiv 0}.
\end{multline*}
Again, the second, third and fourth terms on the r.h.s. are equivalent to zero. Indeed, at most one derivative involved in the densities is of order $j_0+1$ and all the other are lower order: from $\Delta(j_0, j_1, j_2)\geq 2$ we get $j_0+1\geq 3$ and that settles the third term on the r.h.s.;  for the fourth term the argument is even easier and for the second term, $\Delta(j_0, j_1, j_2)\geq 2$ implies $j_1+1<j_0+1$.
Hence we get from \eqref{secondAsec}
\begin{equation}\label{icam}A\equiv
\frac 32 \int u^{p-1}  (\partial_x^{j_0+1} u)^2 \partial_x^{j_1+1} u  
+\frac 32 \int u^{p-2}  (\partial_x^{j_0+1} u)^2 \partial_x^{j_1} u  \partial_x u
+\frac 32(p-2) \int u^{p-2}  \partial_x u (\partial_x^{j_0+1} u)^2 \partial_x^{j_1} u\,.
\end{equation}
Notice that if $j_1>0$ then the second and third term on the r.h.s. in \eqref{secondAsec} are equivalent to zero:
 the corresponding densities involve two derivatives of order $j_0+1$ but the remaining derivatives have necessarily order below
$j_1+1$, and we conclude that
\begin{equation}
  \label{eee}A\equiv \frac 32 \int {\mathcal I}_{j_0+1, j_1+1, j_2}(u)\,.
\end{equation}
If $j_1=0$, densities are multiple of $ \int u^{p-1}  (\partial_x^{j_0+1} u)^2 \partial_x u$. By computing the sum of the coefficients we get
\begin{equation}\label{fff}A\equiv \frac 32p \int {\mathcal I}_{j_0+1, j_1+1, j_2}(u).\end{equation}
Combining \eqref{eee}, \eqref{fff}, \eqref{gui} and \eqref{ABCDprime} we get the desired conclusion in this subcase.

{\bf  Second case: $\Delta(j_0, j_1, j_2)=1$:} as $j_0=j_1+1$ we have by integration by parts 
(see \eqref{ABCD} for the definition of $B$) 
\begin{multline*}B=\int u^{p-2}  (\partial_x^{j_0} u)^2 \partial_x^{j_0+2} u  \partial_x^{j_2} u=
-\underbrace{\int u^{p-2}  (\partial_x^{j_0} u)^2 \partial_x^{j_0+1} u  \partial_x^{j_2+1} u}_{\equiv 0}-2\int u^{p-2}  \partial_x^{j_0} u (\partial_x^{j_0+1} u)^2  \partial_x^{j_2} u\\
-(p-2) \underbrace{ \int u^{p-3} \partial_x u  (\partial_x^{j_0} u)^2 \partial_x^{j_0+1} u  \partial_x^{j_2} u}_{\equiv 0}\end{multline*}
where the first and third terms on the r.h.s. are equivalent to zero, as one easily checks that only one derivative involved in the corresponding 
densities has order $j_0+1$ and all the others are lower order.
Then
\begin{equation}
  \label{maccB}
  B\equiv -2\int {\mathcal I}_{j_0+1, j_1+1, j_2}(u)\,.
\end{equation}
Moreover we have (see \eqref{ABCD}) $A=\int u^{p-2}  \partial_x^{j_0+3} u \partial_x^{j_0} u \partial_x^{j_0-1} u  \partial_x^{j_2} u$, and if we move one derivative  by integration by parts, from $\partial_x^{j_0+3} u$ on other factors we get:
\begin{multline}\label{43A} A=-\int u^{p-2} ( \partial_x^{j_0+2} u \partial_x^{j_0+1} u \partial_x^{j_0-1} u  \partial_x^{j_2} u
+ u^{p-2}  \partial_x^{j_0+2} u (\partial_x^{j_0} u)^2  \partial_x^{j_2} u
+ u^{p-2}  \partial_x^{j_0+2} u \partial_x^{j_0} u \partial_x^{j_0-1} u  \partial_x^{j_2+1} u )
\\-(p-2)\int u^{p-3}  \partial_x u \partial_x^{j_0+2} u \partial_x^{j_0} u \partial_x^{j_0-1} u  \partial_x^{j_2} u 
=(I_A+II_A+III_A)+IV_A\,.
\end{multline}
We have $I_A=-\frac 12 \int u^{p-2}  \partial_x ((\partial_x^{j_0+1} u)^2) \partial_x^{j_0-1} u  \partial_x^{j_2} u$
and by integration by parts
\begin{multline*}I_A = \frac 12 \int u^{p-2} (\partial_x^{j_0+1} u)^2 \partial_x^{j_0} u  \partial_x^{j_2} u 
+\frac 12 \int u^{p-2}  (\partial_x^{j_0+1} u)^2 \partial_x^{j_0-1} u  \partial_x^{j_2+1} u 
\\+\frac{p-2}2 \int u^{p-3} \partial_x u (\partial_x^{j_0+1} u)^2 \partial_x^{j_0-1} u  \partial_x^{j_2} u\,.
\end{multline*}
Regarding $II_A$, integration by parts moves one derivative from $\partial_x^{j_0+2} u$ on other factors and we get
\[
II_A=2 \int u^{p-2}  (\partial_x^{j_0+1} u )^2\partial_x^{j_0} u  \partial_x^{j_2} u 
+\underbrace{\int u^{p-2}  \partial_x^{j_0+1} u (\partial_x^{j_0} u)^2  \partial_x^{j_2+1} u }_{\equiv 0}
+(p-2) \underbrace{ \int u^{p-3} \partial_x u \partial_x^{j_0+1} u (\partial_x^{j_0} u)^2  \partial_x^{j_2} u}_{\equiv 0}\,.
\]
Again, the second and third terms  on the r.h.s. are equivalent to zero, as  $\Delta(j_0, j_1, j_2)=1$ implies at most one derivative in the corresponding densities has order $j_0+1$ and all the other are lower order.

Similarly by integration by parts we rewrite $III_A$ as follows
\begin{multline*}
{III_A=\int u^{p-2}  (\partial_x^{j_0+1} u )^2\partial_x^{j_0-1} u  \partial_x^{j_2+1} u 
+\underbrace{ \int u^{p-2}  \partial_x^{j_0+1} u (\partial_x^{j_0} u)^2 \partial_x^{j_2+1} u}_{\equiv 0}}\\
+\int u^{p-2}  \partial_x^{j_0+1} u \partial_x^{j_0} u \partial_x^{j_0-1} u  \partial_x^{j_2+2} u 
+(p-2) \underbrace{ \int u^{p-3} \partial_x u \partial_x^{j_0+1} u \partial_x^{j_0} u \partial_x^{j_0-1} u  \partial_x^{j_2+1} u}_{\equiv 0}.\end{multline*}
Again, on the r.h.s. we have terms equivalent to zero as, exactly as above, the corresponding densities only one derivative has order $j_0+1$ and all the other are lower order.

By integration by parts, moving  one derivative from $\partial_x^{j_0+2} u$ on other factors, we get
\begin{multline*}
IV_A=(p-2)\int u^{p-3}  \partial_x u( \partial_x^{j_0+1} u )^2 \partial_x^{j_0-1} u  \partial_x^{j_2} u
+(p-2) \underbrace{\int u^{p-3}  \partial_x u \partial_x^{j_0+1} u (\partial_x^{j_0} u)^2  \partial_x^{j_2} u}_{\equiv 0}\\
+(p-2)\underbrace{\int u^{p-3} \partial_x u \partial_x^{j_0+1} u \partial_x^{j_0} u \partial_x^{j_0-1} u \partial_x^{j_2+1} u}_{\equiv 0}
+(p-2)\int u^{p-3}  \partial_x^2 u \partial_x^{j_0+1} u \partial_x^{j_0} u \partial_x^{j_0-1} u  \partial_x^{j_2} u 
\\+(p-2)(p-3) \underbrace{ \int u^{p-4} (\partial_x u)^2 \partial_x^{j_0+1} u \partial_x^{j_0} u \partial_x^{j_0-1} u  \partial_x^{j_2} u}_{\equiv 0}\,.
\end{multline*}
On the r.h.s. we get several terms equivalent to zero as, once again,
 corresponding densities have one derivative of order $j_0+1$ and all others are lower order. 
Summarizing we get from \eqref{43A} the following equivalence:
\begin{multline}\label{macc}A\equiv \frac 12 \int u^{p-2} (\partial_x^{j_0+1} u)^2 \partial_x^{j_0} u  \partial_x^{j_2} u 
+\frac 12 \int u^{p-2}  (\partial_x^{j_0+1} u)^2 \partial_x^{j_0-1} u  \partial_x^{j_2+1} u 
\\+\frac{p-2}2 \int u^{p-3} \partial_x u (\partial_x^{j_0+1} u)^2 \partial_x^{j_0-1} u  \partial_x^{j_2} u 
+2 \int u^{p-2}  (\partial_x^{j_0+1} u )^2\partial_x^{j_0} u  \partial_x^{j_2} u 
\\\!\!\!\!+\int u^{p-2}  (\partial_x^{j_0+1} u )^2\partial_x^{j_0-1} u  \partial_x^{j_2+1} u 
+\int u^{p-2}  \partial_x^{j_0+1} u \partial_x^{j_0} u \partial_x^{j_0-1} u  \partial_x^{j_2+2} u\\+(p-2)\int u^{p-3}  \partial_x u( \partial_x^{j_0+1} u )^2 \partial_x^{j_0-1} u  \partial_x^{j_2} u
+(p-2)\int u^{p-3}  \partial_x^2 u \partial_x^{j_0+1} u \partial_x^{j_0} u \partial_x^{j_0-1} u  \partial_x^{j_2} u\,.
\end{multline}
Moreover by \eqref{ABCD} and integration by parts,
\begin{align*}\label{maccD}
  D = & \int u^{p-3}  \partial_x^3 u (\partial_x^{j_0} u)^2 \partial_x^{j_0-1} u  \partial_x^{j_2} u\\
= &  -2\int u^{p-3}  \partial_x^2 u \partial_x^{j_0+1} u  \partial_x^{j_0} u  \partial_x^{j_0-1} u  \partial_x^{j_2} u
-\underbrace{ \int u^{p-3}  \partial_x^2 u (\partial_x^{j_0} u)^3   \partial_x^{j_2} u}_{\equiv 0}\\
& \quad\quad\quad- \underbrace{ \int u^{p-3}  \partial_x^2 u (\partial_x^{j_0} u)^2 \partial_x^{j_0-1} u  \partial_x^{j_2+1} u}_{\equiv 0} -(p-3) \underbrace{ \int u^{p-4} \partial_xu   \partial_x^2 u (\partial_x^{j_0} u)^2 \partial_x^{j_0-1} u  \partial_x^{j_2} u}_{\equiv 0}
  \end{align*}
where three terms on the r.h.s. are equivalent to zero as corresponding densities have at most one derivative of order $j_0+1$ and all the others are lower order. Hence we get
\begin{equation}\label{maccD}
D\equiv -2\int u^{p-3}  \partial_x^2 u \partial_x^{j_0+1} u  \partial_x^{j_0} u  \partial_x^{j_0-1} u  \partial_x^{j_2} u\,.
\end{equation}
Next we split in three subcases.\\
{\em First subcase: $\Delta(j_0, j_1, j_2)=1, j_1>j_2$:} here all terms except the first and the fourth on the r.h.s. in \eqref{macc} are equivalent to zero
and 
\begin{equation}\label{brun}
A\equiv \frac 52 \int {\mathcal I}_{j_0+1, j_1+1, j_2}(u).\end{equation}
In fact, densities involved at the second, third, fifth and seventh terms on the r.h.s. in \eqref{macc}
involve two derivatives of order $j_0+1$ but all the remaining factors involve derivatives of order less than $j_0$.
Notice also that sixth and eighth terms on the r.h.s. in \eqref{macc} are equivalent to zero as, in corresponding densities, only one derivative has order $j_0+1$ and all the remaining are lower order. Moreover, $j_2+3\leq j_0+1$ and \eqref{ABCD} yield
\begin{equation}\label{bruno}C\equiv 0\end{equation}
as at most one derivative in the corresponding density is of order $j_0+1$ and all others have lower order. Notice also that
\begin{equation}\label{brunor}
D\equiv 0\end{equation} from \eqref{maccD},  as necessarily $j_0>1$ hence only one
derivative in the density is of order $j_0+1$ and all others are lower order. We conclude by combining \eqref{brun}, \eqref{bruno}, \eqref{brunor}, \eqref{maccB} and \eqref{ABCDprime}.

{\em Second subcase: $\Delta(j_0, j_1, j_2)=1, j_1=j_2>0$:}  all terms on the r.h.s. in  \eqref{macc} are equivalent to zero except the first, second, fourth, fifth, sixth and hence
\begin{equation}\label{mel}A\equiv 5 \int {\mathcal I}_{j_0+1, j_1+1, j_2}(u) .
\end{equation}
In fact one can check that remaining terms on the r.h.s. in \eqref{macc} are equivalent to zero as either corresponding densities involve at most one derivative
of order $j_0+1$ and others are lower order or two derivatives are of order $j_0+1$ but other derivatives are of order less than $j_0$. Moreover we have from \eqref{ABCD},
\begin{align*}
C= &\int u^{p-2}  (\partial_x^{j_0} u)^2 \partial_x^{j_0-1} u  \partial_x^{j_0+2} u \\
= & -\underbrace{\int u^{p-2}  (\partial_x^{j_0} u)^2 \partial_x^{j_0} u  \partial_x^{j_0+1} u }_{\equiv 0 }
-2\int u^{p-2}  \partial_x^{j_0} u \partial_x^{j_0-1} u ( \partial_x^{j_0+1} u)^2 
    -(p-2)\underbrace{\int u^{p-3} \partial_x u (\partial_x^{j_0} u)^2 \partial_x^{j_0-1} u  \partial_x^{j_0+1} u}_{\equiv 0}
\end{align*}
where we used integration by parts and noticed that the first and third terms on the r.h.s. are equivalent to zero 
as corresponding densities have only one derivative of order $j_0+1$ and others are lower order. Hence,
\begin{equation}\label{mele}C\equiv -2 \int {\mathcal I}_{j_0+1, j_1+1, j_2}(u).\end{equation}
Notice also that by \eqref{maccD} and $j_0>1$, we get
\begin{equation}\label{mele1}D\equiv 0\end{equation}
as in the corresponding density at most one derivative is of order $j_0+1$. We conclude this subcase by combining \eqref{mel}, \eqref{mele}, \eqref{mele1}, \eqref{maccB} and \eqref{ABCDprime}.

{\em Third subcase: $\Delta(j_0, j_1, j_2)=1, j_1=j_2=0$:}  on the r.h.s in \eqref{macc} no term is equivalent to zero, indeed all of them are equivalent to $\int {\mathcal I}_{j_0+1, j_1+1, j_2}(u) dx$,
hence
\begin{equation}\label{petr}
A\equiv \frac 52 p \int {\mathcal I}_{j_0+1, j_1+1, j_2}(u).
\end{equation}
Notice also that
in this subcase by \eqref{ABCD} we get by integration by parts
\begin{equation*}
  C=\int u^{p-1}  (\partial_x u)^2  \partial_x^{3} u\\=-2\int u^{p-1}  \partial_x u  (\partial_x^{2} u)^2-(p-1) \underbrace{\int u^{p-2}
    (\partial_x u)^3  \partial_x^{2} u}_{\equiv 0}
\end{equation*}
where the second term on the r.h.s. is equivalent to zero as only one derivative has order $j_0+1$ and all other factors involve less derivatives. 
Hence we get
\begin{equation}\label{petro}C
\equiv -2  \int {\mathcal I}_{j_0+1, j_1+1, j_2}(u).\end{equation}
Moreover by \eqref{maccD}
we get 
\begin{equation}\label{petron}D\equiv -2\int u^{p-1} ( \partial_x^2 u)^2 \partial_x u  dx=-2\int {\mathcal I}_{j_0+1, j_1+1, j_2}(u).
\end{equation}
We conclude this subcase by combining \eqref{petr}, \eqref{petro}, \eqref{petron}, \eqref{maccB} and \eqref{ABCDprime}.

{\bf Third case: $\Delta(j_0, j_1, j_2)=0$:} first of all, we have by \eqref{ABCD}
$$D=\int u^{p-3}  \partial_x^3 u (\partial_x^{j_0} u)^2 \partial_x^{j_0} u  \partial_x^{j_2} u.$$
For $j_2>0$ we get $D\equiv 0$ as the density on the r.h.s.  involves five factors with non trivial derivatives. For $j_2=0$ we have,
using $2j_0+j_1+j_2=2k-2$, that $j_1$ is an even number and as  $j_{1}=j_0$ we have necessarily $j_0\geq 2$, and hence
$$D=\int u^{p-2}  \partial_x^3 u (\partial_x^{j_0} u)^2 \partial_x^{j_0} u \equiv 0$$
where in the density above at most one derivative has order $j_0+1$ due to the fact that $j_0\geq 2$ and all the other factors have less derivatives.
Summarizing we have 
\begin{equation}\label{gozz}
D\equiv 0.
\end{equation}
Notice also that in this case by \eqref{ABCD} we have 
\begin{equation}\label{aequivb}
A\equiv B
\end{equation}
hence we focus on $A$. By definition of $A$ (see \eqref{ABCD}) and integration by parts we get
\begin{align}\label{poststru}
A= &\int u^{p-2}  \partial_x^{j_0+3} u (\partial_x^{j_0} u)^2  \partial_x^{j_2} u \\
\nonumber = &    \begin{multlined}[t]
   -2 \int u^{p-2}  \partial_x^{j_0+2} u \partial_x^{j_0+1} u \partial_x^{j_0} u  \partial_x^{j_2} u 
-\int u^{p-2}  \partial_x^{j_0+2} u (\partial_x^{j_0} u)^2  \partial_x^{j_2+1} u 
\\- (p-2) \int u^{p-3} \partial_x u \partial_x^{j_0+2} u (\partial_x^{j_0} u)^2  \partial_x^{j_2} u 
=I_A+II_A+III_A.    \end{multlined}
\end{align}
We have
$I_A=-\int u^{p-2}  \partial_x( \partial_x^{j_0+1} u)^2) \partial_x^{j_0} u  \partial_x^{j_2} u $
and by integration by parts,
\[
I_A= \int u^{p-2}  (\partial_x^{j_0+1} u)^2 \partial_x^{j_0+1} u  \partial_x^{j_2} u +\int u^{p-2}  (\partial_x^{j_0+1} u)^2 \partial_x^{j_0} u  \partial_x^{j_2+1} u 
+(p-2)\underbrace{\int u^{p-3}  \partial_x u (\partial_x^{j_0+1} u)^2 \partial_x^{j_0} u  \partial_x^{j_2} u}_{\equiv 0}.
\]
The third term on the right hand side
is equivalent to zero as the density involves two factors with derivatives of order $j_0+1$ and other factors involve less that $j_1+1=j_0+1$ derivatives.
By integration by parts we move one derivative from $\partial_x^{j_0+2} u$ on other factors and  write he second term on the r.h.s in \eqref{poststru}
as 
\begin{multline*}
II_A= 2 \int u^{p-2}  (\partial_x^{j_0+1} u)^2\partial_x^{j_0} u  \partial_x^{j_2+1} u 
+\int u^{p-2}  \partial_x^{j_0+1} u (\partial_x^{j_0} u)^2  \partial_x^{j_2+2} u \\+ (p-2)\underbrace{\int u^{p-3} \partial_x u \partial_x^{j_0+1} u (\partial_x^{j_0} u)^2  \partial_x^{j_2+1} u }_{\equiv 0}
\end{multline*}
where the third term on the r.h.s. is equivalent to zero as the corresponding density involves five factors with nontrivial derivatives.
Similarly by integration by parts we write the third term on the r.h.s in \eqref{poststru} as 
\begin{multline*}
III_A=2 (p-2) \underbrace{\int u^{p-3} \partial_x u (\partial_x^{j_0+1} u)^2 \partial_x^{j_0} u  \partial_x^{j_2} u}_{\equiv 0}+(p-2) \underbrace{\int u^{p-3} \partial_x u \partial_x^{j_0+1} u (\partial_x^{j_0} u)^2  \partial_x^{j_2+1} u}_{\equiv 0}
\\+(p-2) \underbrace{\int u^{p-3} \partial_x^2 u \partial_x^{j_0+1} u (\partial_x^{j_0} u)^2  \partial_x^{j_2} u}_{\equiv 0}
+(p-2)(p-3) \underbrace{\int u^{p-4} (\partial_x u)^2 \partial_x^{j_0+1} u (\partial_x^{j_0} u)^2  \partial_x^{j_2} u}_{\equiv 0}.
\end{multline*}
The first term on the r.h.s. is equivalent to zero as it is equal to the third term on the r.h.s. of $I_A$ above. Concerning the second and fourth terms on the r.h.s., they are equivalent to zero as in the corresponding densities there are at least five factors involving
nontrivial derivatives. Finally, the third term on the r.h.s. is zero as at most two factors may have $j_0+1$ derivatives 
and all other terms have less than $j_0+1=j_1+1$ derivatives. 
Summarizing we get from \eqref{poststru}
\begin{equation}\label{gozzoA}
A\equiv  \int {\mathcal I}_{j_0+1, j_1+1, j_2}(u)
+ 3 \int u^{p-2}  (\partial_x^{j_0+1} u)^2\partial_x^{j_0} u  \partial_x^{j_2+1} u
+\int u^{p-2}  \partial_x^{j_0+1} u (\partial_x^{j_0} u)^2  \partial_x^{j_2+2} u.
\end{equation}
Next we split in two subcases.

{\em First subcase: $\Delta(j_0, j_1, j_2)=0, j_0=j_1=j_2$:} we have from \eqref{ABCD} 
\begin{equation}\label{sargo}A=B=C=\int u^{p-2}  \partial_x^{j_0+3} u \partial_x^{j_0} u \partial_x^{j_0} u  \partial_x^{j_0} u
\end{equation}
and moreover by \eqref{gozzoA} we get
\begin{align*}
A\equiv & \int {\mathcal I}_{j_0+1, j_1+1, j_2}(u) 
+ 3 \int u^{p-2}  (\partial_x^{j_0+1} u)^3\partial_x^{j_0} u +\int u^{p-2}  \partial_x^{j_0+1} u (\partial_x^{j_0} u)^2  \partial_x^{j_0+2} u \\
= & 4 \int {\mathcal I}_{j_0+1, j_1+1, j_2}(u) 
+\frac 12 \int u^{p-2}  (\partial_x^{j_0} u)^2  \partial_x ((\partial_x^{j_0+1} u)^2)\\
= & 4 \int {\mathcal I}_{j_0+1, j_1+1, j_2}(u)  -
\int u^{p-2}  \partial_x^{j_0} u  (\partial_x^{j_0+1} u)^3 - \frac {p-2}2 \underbrace{\int u^{p-3}  \partial_x u (\partial_x^{j_0} u)^2  (\partial_x^{j_0+1} u)^2 }_{\equiv 0}
\end{align*}
where we used again integration by parts and the third term on the r.h.s. is equivalent to zero
as the corresponding density involves five factors with nontrivial derivatives.
Summarizing we get
\begin{equation}\label{gozzoAA}
A\equiv  3 \int {\mathcal I}_{j_0+1, j_1+1, j_2}(u).
\end{equation}
We conclude by \eqref{gozzoAA}, \eqref{gozz}, \eqref{sargo} and \eqref{ABCDprime}.
\\

{\em Second subcase: $j_0=j_1>j_2$:} from
\eqref{gozzoA} we get
\[
A\equiv  \int {\mathcal I}_{j_0+1, j_1+1, j_2}(u)
+ 3 \underbrace{\int u^{p-2}  (\partial_x^{j_0+1} u)^2\partial_x^{j_0} u  \partial_x^{j_2+1} u}_{\equiv 0} 
+\underbrace{\int u^{p-2}  \partial_x^{j_0+1} u (\partial_x^{j_0} u)^2  \partial_x^{j_2+2} u}_{\equiv 0}
\]
where the second term on the r.h.s. is equivalent to zero as the density involves two derivatives of order $j_0+1$ and all other derivatives have order less than $j_0+1$,
while the density of the third term on the r.h.s. involves at most two derivative of order $j_0+1$ and all other derivatives have order at most $j_0$.
Summarizing we get 
\begin{equation}\label{gozzol}
A\equiv  \int {\mathcal I}_{j_0+1, j_1+1, j_2}(u).
\end{equation}
Next, $2j_0+j_1+j_2=2k-2$ implies that $j_1+j_2$ is even and as $j_1>j_2$ we get
\begin{equation}\label{conditmonot}j_2+2\leq j_1=j_0.\end{equation} 
On the other hand by \eqref{ABCD} we have
\begin{equation}\label{ozzC}C=\int u^{p-2}  (\partial_x^{j_0} u)^2 \partial_x^{j_0} u  \partial_x^{j_2+3} u\equiv 0\end{equation}
where on the r.h.s. the density involves at most one derivative of order $j_0+1$ (see \eqref{conditmonot})
and all other factors have less than $j_0$ derivatives. We conclude by \eqref{gozz}, \eqref{aequivb}, \eqref{gozzol}, \eqref{ozzC} and \eqref{ABCDprime}.
\end{proof}

\section{Proof of Theorem \ref{basicB}}\label{sectqil}

{ The proof heavily relies on Proposition \ref{key}. We set $\bar {\mathcal R}_k(u)$ to be the density from the r.h.s. of \eqref{Rku}
and $\bar {\mathcal G}_{k,\infty}^T(\varphi)$ to be the r.h.s. in \eqref{gknt}. First,  we will prove \eqref{RestimateB}. For $M=\infty$, the identity \eqref{timeder} follows from integrating the identity from Proposition \ref{key} and a density argument like the one we used to prove \eqref{timederA}. Second,  we will prove \eqref{Gestimate} in the case $M=\infty$. In a further step,  we will define, for $M\in \N$, the functionals $\bar {\mathcal G}_{k,M}^T(\varphi)$ in such a way that \eqref{timeder} occurs and we will prove the bound
\eqref{Gestimate} for $M\in \N$. Constructing
 $\bar {\mathcal G}_{k,M}^T(\varphi)$ will require a slight modification of $\bar {\mathcal G}_{k,\infty}^T(\varphi)$.
Finally we will prove \eqref{Gestimate} for $M\in \N$ to conclude with the proof of \eqref{Gestimateprime}.}
\subsection{Proof of \eqref{RestimateB}}
The proof of \eqref{RestimateB} follows the same ideas used along the proof of 
\eqref{RestimateA}. Recall that the structure of $\bar {\mathcal R}_k(u)$
is provided in \eqref{Rku} and hence by repeating the same computations as in
\eqref{proofrepeat}, except a modification of the interpolation estimates at the last step, we get:
\begin{equation*}|\int (\partial_x^{j_0} u)^2 \partial_x^{j_1} u \partial_x^{j_2} u u^{p-2}|
\leq  C \|u\|_{H^{j_0}}^2 \|u\|_{H^{j_1+1}}  \| u\|_{H^{j_2+1}}  \|u\|^{p-2}_{H^1}\leq 
C \|u\|_{H^{(k-\frac 12)^-}}^{2\theta_0+\theta_1+\theta_2} \|u\|^{2-2\theta_0-\theta_1-\theta_2+ p}_{H^1}
\end{equation*}
where $\theta_0 (k-\frac 12)^- + (1-\theta_0)=j_0$, $\theta_l (k-\frac 12)^- + (1-\theta_l)= j_l+1$ for $l=1,2$. We conclude \eqref{RestimateB}
since we have
$$
2\theta_0+\theta_1+\theta_2=(\frac{4k-8}{2k-3})^+\,.
$$
\subsection{Proof of \eqref{Gestimate} in the case $M=\infty$} Since the expression $\bar {\mathcal G}_{k,\infty}^T(\varphi)$ is defined by the r.h.s. in \eqref{gknt}
we can adapt the proof of \eqref{GestimateA} in order to get \eqref{Gestimate} in the case $M=\infty$.
In fact it is sufficient to prove the following version of \eqref{GestimateDA}
and \eqref{GestimateCA}
for any $R,T>0$ and for all $\varphi\in H^k$  such that $\|\varphi\|_{H^1}<R$:
\begin{gather}\label{GestimateDB}
\Big |\int_0^T \int \prod_{l=0}^{p+1} \partial_x^{j_l} v \Big|\leq  C + C\|\varphi\|_{H^{(k-\frac 12)^-}}^{(\frac{4k-8}{2k-3})^+}, \quad
j_{p+1}\leq \dots \leq j_{0},\,\sum_{l=0}^{p+1}j_l= 2k+1, \, j_4\geq 1\,,\\
\label{GestimateCB}
\Big|\int_0^T \int \prod_{l=0}^{2p+1} \partial_x^{j_l} v \Big|\leq  C 
+ C\|\varphi\|_{H^{(k-\frac 12)^-}}^{(\frac{4k-8}{2k-3})^+},\quad j_{2p+1}\leq \dots \leq j_{0}, \sum_{l=0}^{2p+1}j_l\leq 2k-1,  j_2\geq 1\,.
\end{gather}
The proof of \eqref{GestimateDB} follows, exactly as for the proof of \eqref{GestimateDA}, by considering two subcases: $j_5=0$ and $j_5\neq 0$.
In the first case we start from the estimate 
\eqref{j5=0} and the subsequent computation which imply
\begin{equation*}\Big |\int_0^T \int \prod_{l=0}^{p+1} \partial_x^{j_l} v \Big |
\leq C \big (\prod_{l=0}^4 \|\varphi\|_{H^{j_l^+}} \big ) \|\varphi\|_{H^1}^{p-3} 
\end{equation*}
and hence by interpolation we can continue the estimate as follows
$$(\cdots) \leq C  \big (\prod_{l=0}^4 \|\varphi\|_{H^{(k-\frac 12)^-}}^{\sum_{l=0}^4\theta_l}\big ) \|\varphi\|_{H^{1}}^{p+2-\sum_{l=0}^4 \theta_l} $$
where 
$\theta_l (k-\frac 12)^- + (1-\theta_l)= j_l^+$ for $l=0, \dots, 4$ namely 
$\theta_l=\frac{j_l^+-1}{k-\frac 32}$ and
$$
\theta_0+\theta_1+\theta_2+\theta_3+\theta_4= (\frac{4k-8 }{2k-3})^+.
$$
Hence we get \eqref{GestimateDB} in the case $j_5=0$.
In order to establish \eqref{GestimateDB} in the case $j_5\neq 0$ we argue as in the proof
of \eqref{GestimateDA} for $j_5\neq 0$ and hence following \eqref{refIHP} we get
\begin{equation*}
  \Big |\int_0^T \int \prod_{l=0}^{p+1} \partial_x^{j_l} v \Big |
\leq C \prod_{l=0}^5 \|\varphi\|_{H^{j_l^+}}  \prod_{l=6}^{p+1} \|v\|_{L^\infty ((0,T); H^{j_l^++1})} \leq C \prod_{l=0}^5 \|\varphi\|_{H^{j_l^+}}  \prod_{l=6}^{p+1} \|\varphi \|_{H^{j_l^++1}}
\end{equation*}
and we can continue the estimate as follows by interpolation
\begin{equation}
(\cdots)\leq  C \prod_{l=0}^5 (\|\varphi\|_{H^{1}}^{1-\theta_l} \|\varphi\|_{H^{(k-\frac12)^{-}}}^{\theta_l})  \prod_{l=6}^{p+1} (\|\varphi\|_{H^{1}}^{1-\eta_l} \|\varphi\|_{H^{(k-\frac 12)^-}}^{\eta_l}) 
= C \|\varphi\|_{H^{(k-\frac 12)^-}}^{\sum_{l=0}^5 \theta_l +\sum_{l=6}^{p+1} \eta_l}\|\varphi\|_{H^1}^{p+2-\sum_{l=0}^5 \theta_l -\sum_{l=6}^{p+1} \eta_l}
\end{equation} 
where
$\theta_l (k-\frac 12)^- + (1-\theta_l)= j_l^+$ for $l=0, \dots, 5$, $\eta_l (k-\frac 12)^- + (1-\eta_l)= j_l^++1$ for $l=6,\dots, p+1$. We conclude
\eqref{GestimateDB} for $j_5\neq 0$
since 
$$\sum_{l=0}^5 \theta_l +\sum_{l=6}^{p+1} \eta_l=(\frac{4k-10 }{2k-3})^+$$ 
(notice that in fact we get in this case a stronger version of \eqref{GestimateDB} since we get on the r.h.s. an exponent even smaller than
$(\frac{4k-8}{2k-3})$).

In order to establish \eqref{GestimateCB} we argue as in the proof of \eqref{GestimateCA}
and hence we split in the following subcases: $j_3=0$ and $j_3\neq 0$.
In the case $j_3=0$ we can argue as in \eqref{rmcis} and the subsequent computations in order to get
\begin{equation*}
\Big |\int_0^T \int \prod_{l=0}^{2p+1} \partial_x^{j_l} v \Big|
\leq C \big( \prod_{l=0}^2 \|\varphi\|_{H^{j_l^+}}\big)  \|\varphi\|_{H^1}^{2p-4} 
\end{equation*}
and hence by interpolation we can continue the estimate as follows
\begin{equation*}(\cdots)\leq C  \prod_{l=0}^2 (\|\varphi\|_{H^{(k-\frac 12)^-}}^{\theta_l} \|\varphi\|_{H^{1}}^{1-\theta_l} ) \|\varphi\|_{H^1}^{2p-1}
= C \|\varphi\|_{H^{(k-\frac 12)^-}}^{\sum_{l=0}^2\theta_l}\|\varphi\|_{H^{1}}^{2p+2-\sum_{l=0}^2 \theta_l}
\end{equation*}
where $\theta_l (k-\frac 12)^- + (1-\theta_l)= j_l^+$ for $l=0,1,2$. We get \eqref{GestimateCB} in the case $j_3=0$ since
$\theta_0+\theta_1+\theta_2=(\frac{4k-8 }{2k-3})^+.$
In order to prove \eqref{GestimateCB} in the case $j_3\neq 0$ we can argue as in \eqref{rmsc} and we get
\begin{equation*}
\Big |\int_0^T \int \prod_{l=0}^{2p+1} \partial_x^{j_l} v \Big|
\leq   C \prod_{l=0}^3 \|\varphi\|_{H^{j_l^+}}  \prod_{l=4}^{2p+1} \|\varphi \|_{H^{j_l^++1}}
\end{equation*}
and we can continue by interpolation as follows
\begin{align*}(\cdots) \leq &  C \prod_{l=0}^3 (\|\varphi\|_{H^{1}}^{1-\theta_l} \|\varphi\|_{H^{(k-\frac 12)^-}}^{\theta_l})  \prod_{l=4}^{2p+1} (\|\varphi\|_{H^{1}}^{1-\eta_l} \|\varphi\|_{H^{(k-\frac 12)^-}}^{\eta_l})\\
  \leq & C \|\varphi\|_{H^{(k-\frac 12)^-}}^{\sum_{l=0}^3 \theta_l +\sum_{l=4}^{2p+1} \eta_l}\|\varphi\|_{H^1}^{2p+2-\sum_{l=0}^5 \theta_l -\sum_{l=6}^{p+1} \eta_l}
\end{align*}
where $\theta_l (k-\frac 12)^- + (1-\theta_l)= j_l^+$ for $l=0,1,2, 3$ and $\eta_l (k-\frac 12)^- + (1-\eta_l)= j_l^++1$ for $l=4,\dots, 2p+1$, then 
$\sum_{l=0}^3 \theta_l +\sum_{l=4}^{2p+1} \eta_l=(\frac{4k-10}{2k-3})^+.$ Then we conclude \eqref{GestimateCB} 
in the case $j_3\neq 0$ (in fact we get an even strong version since we have $(\frac{4k-10}{2k-3})^+<(\frac{4k-8}{2k-3})$.
\subsection{Definition of $\bar {\mathcal G}^T_{k,M}$ for $M\in \N$ and proof of \eqref{Gestimate} with constant uniform w.r.t. $M$}
We first introduce the energies $\bar {\mathcal G}_{k, M}^T$ in such a way that
\eqref{timeder} occurs, and we shall also establish \eqref{Gestimate} for $M\in \N$. We set $v_M(t,x)=\pi_M (v(t,x))$ where $v(t,x)$ is the unique solution to \eqref{gKdVtrunc} for a given $M\in \N$. We retain notations from Section~\ref{N=infty}, except we need to alter ${\mathcal I}^{**}_{j_0, j_1, j_2}(u)$
as follows:
\begin{definition}
For any $(j_0, j_1, j_2)\in {\mathcal A}^k$
and for every $M\in \N$
we define
\begin{multline*}{\mathcal I}^{**}_{j_0, j_1, j_2,M}(u)=(p-2)  u^{p-3}   \pi_M(\partial_x (u^{p+1})) (\partial_x^{j_0} u)^2 \partial_x^{j_1} u  \partial_x^{j_2} u
+ 2 u^{p-2}  \partial_x^{j_0} u  \pi_M(\partial_x^{j_0+1} (u^{p+1})) \partial_x^{j_1} u  \partial_x^{j_2} u
\\+ u^{p-2} (\partial_x^{j_0} u)^2  \pi_M(\partial_x^{j_1+1} (u^{p+1}))  u  \partial_x^{j_2} u 
+ u^{p-2} (\partial_x^{j_0} u)^2 \partial_x^{j_1} u   \pi_M(\partial_x^{j_2+1} (u^{p+1}))
\end{multline*}
\end{definition}
The main difference between  ${\mathcal I}^{**}_{j_0, j_1, j_2}(u)$ and ${\mathcal I}^{**}_{j_0, j_1, j_2,M}(u)$ is the projector $\pi_M$.
The reason of this modification is that we have the following identity (compare with \eqref{basics})
\begin{equation}\label{basicsN}\frac d{dt} \int {\mathcal I}_{j_0, j_1, j_2}(v_M)=\int {\mathcal I}^{*}_{j_0, j_1, j_2}(v_M) - \int {\mathcal I}^{**}_{j_0, j_1, j_2,M}(v_M)
\end{equation}
where $v_M(t,x)=\pi_M (v(t,x))$ and $v(t,x)$ is a solution to \eqref{gKdVtrunc}.
Next we notice that we have the following version of \eqref{quarda} where the solution $v(t,x)$ to \eqref{gKdV}
is replaced by $v_M(t,x)$:
\begin{equation}\label{quardaM}
\frac d{dt} \|\partial^{k}_{x}v_M\|_{L^2}^2=  \sum_{(j_0, j_1, j_2)\in {\mathcal A}^k} 
\beta_{j_0, j_1, j_2} \int {\mathcal I}_{j_0+1,j_1+1,j_2}(v_M)\\
 + \text{l.o.t.}
 \end{equation}
where l.o.t. denote the integral of a linear combination of densities belonging to $\Theta_k$ and $\Omega_k$
computed along the solution $v_M(t,x)$.
In fact we have
\[
\frac d{dt} \|\partial^{k}_{x}v_M\|_{L^2}^2=2 \int \partial_t \partial_x^k v_M \partial_x^k v_M
= 2\int \partial_x^{k+3} v_M \partial_x^k v_M - 2 \int \partial_x^{k+1} \pi_M (v_M^{p+1}) \partial_x^k v_M=-2 \int \partial_x^{k+1} (v_M^{p+1}) \partial_x^k v_M
\]
where we used that $\pi_M$ is symmetric and $\pi_M (v_M)=v_M$.
Hence the proof  of \eqref{quardaM} follows that of \eqref{quarda}. Next recall that by \eqref{minsett}, which is available for any generic function $u(x)$, we get
\[
  \sum_{(j_0, j_1, j_2)\in {\mathcal A}_k} \beta_{j_0, j_1, j_2}  \int {\mathcal I}_{j_0+1, j_1+1, j_2}(v_M) - 
\sum_{(j_0, j_1, j_2)\in {\mathcal A}_k} \lambda_{j_0, j_1, j_2} \int {\mathcal I}_{j_0, j_1, j_2}^*(v_M)=\text{l.o.t.}
\]
where l.o.t. denotes the integral of a linear combination of densities belonging to $\Theta_k$ and $\Omega_k$.
By combining this identity with \eqref{quardaM} and \eqref{basicsN}
we get (by defining the energy $\bar {\mathcal E}_k$ as in \eqref{energyEk})
\begin{multline}\label{adaptversion}
\frac{d}{dt} \bar {\mathcal E}_k(v_M)
= \sum_{(j_0, \dots , j_{2p+1})\in {\mathcal C}^k } 
\mu_{j_0, \dots, j_{2p+1}} \int {\mathcal J}_{j_0, \dots , j_{2p+1}}(v_M) \\ +
\sum_{(j_0, \dots , j_{p+1})\in {\mathcal D}^k } 
\nu_{j_0, \dots, j_{p+1}} \int {\mathcal J}_{j_0, \dots , j_{p+1}}(v_M)- \sum_{(j_0, j_1, j_2)\in {\mathcal A}^k} 
\lambda_{j_0, j_1, j_2} \int {\mathcal I}_{j_0,j_1,j_2,M}^{**}(v_M) 
\end{multline}
where on the r.h.s. we have the integral of a linear combination of densities belonging to $\Theta_k$ and $\Omega_k$,
computed along the solution $v_M(t,x)$ plus the integral of a linear combination of the expressions 
${\mathcal I}_{j_0,j_1,j_2,M}^{**}(v_M(t,x))$ where $(j_0, j_1, j_2)\in {\mathcal A}_k$.
Notice that we cannot expand  $\int {\mathcal I}_{j_0,j_1,j_2,M}^{**}(v_M(t,x)) dx$
as the integral of linear combination of terms belonging to $\Theta_k$ computed along $v_M(t,x)$  because of $\pi_M$ appearing in ${\mathcal I}_{j_0,j_1,j_2,M}^{**}(u)$.\\
Based on the r.h.s. in \eqref{adaptversion} we define 
\begin{multline}\label{mend}
\bar {\mathcal G}_{k,M}^T(\varphi)=
\sum_{(j_0, \dots , j_{2p+1})\in {\mathcal C}^k } 
\mu_{j_0, \dots, j_{2p+1}} \int_0^T\int {\mathcal J}_{j_0, \dots , j_{2p+1}}(\pi_M(\Phi_M(t)\varphi) \\ +
\sum_{(j_0, \dots , j_{p+1})\in {\mathcal D}^k } 
\nu_{j_0, \dots, j_{p+1}} \int_0^T \int {\mathcal J}_{j_0, \dots , j_{p+1}}(\pi_M(\Phi_M(t)\varphi) \\- \sum_{(j_0, j_1, j_2)\in {\mathcal A}^k} 
\lambda_{j_0, j_1, j_2} \int_0^T \int {\mathcal I}_{j_0,j_1,j_2,M}^{**}(\pi_M(\Phi_M(t)\varphi)
\end{multline}
and we prove \eqref{Gestimate} for $M\in \N$ with constant uniform w.r.t. $M$. On the r.h.s. of \eqref{mend} we have three type of densities.
The first two groups of densities on the r.h.s. computed along $v_M(t,x)$ can be estimated by using \eqref{GestimateDB} and \eqref{GestimateCB} where $v(t,x)$ is  replaced by $v_M(t,x)$. Indeed the proof can be done {\em mutatis mutandis} by replacing $v(t,x)$ with $v_M(t,x)$.\\
Hence we only need to estimate the third group of densities on the r.h.s. in \eqref{mend} as follows
\begin{equation}\label{lecmer}|\int_0^T \int {\mathcal I}_{j_0,j_1,j_2,M}^{**}(v_M(t,x))|\leq C 
+ C\|\varphi\|_{H^{(k-\frac 12)^-}}^{(\frac{4k-8}{2k-3})^+}, \quad (j_0, j_1, j_2)\in {\mathcal A}^k
\end{equation} with a constant uniform w.r.t. $M$, in order to conclude \eqref{Gestimate}.
Notice that
${\mathcal I}_{j_0,j_1,j_2,M}^{**}(v_M)$ with $(j_0, j_1, j_2)\in {\mathcal A}^k$ can be expressed as a linear combination
of densities belonging to $\Theta_k$, except that the projector $\pi_M$ can appear in front of a group of factors.
The proof of this fact is obvious if $(j_0, j_1, j_2)\neq (k-1,0,0)$. Assuming $(j_0, j_1, j_2)= (k-1,0,0)$, we have 
\begin{equation}\label{IMsta}\int {\mathcal I}_{k-1,0,0,M}^{**}(v_M)
=2\int \partial_x^{k}  \pi_M (v_M^{p+1})   \partial_x^{k-1} v_M  v_M^p
+ p \int 
 (\partial_x^{k-1} v_M)^2 v_M^{p-1} \pi_M \partial_x (v_M^{p+1}) \,.
 \end{equation}
 The density of the second integral on the r.h.s. belongs to $\Theta_k$, with factors $v_M$ up to the projector $\pi_M$ .
 For the first term on the r.h.s.,  recall that by Leibnitz rule we have
$$(p+1) \partial_x^{k-1} u u^p=\partial_x^{k-1} (u^{p+1}) + \sum_{\substack{(\alpha_1,\dots, \alpha_{p+1})\\\alpha_1+\dots +\alpha_{p+1}=k-1\\\alpha_j<k-1}}
c_{\alpha_1,\dots, \alpha_{p+1}}\prod_{j=1}^{p+1} \partial_x^{\alpha_j} u$$
hence the first term on the r.h.s. in \eqref{IMsta} can be expressed as a multiple of
\begin{multline}\label{Lenvis}\int \partial_x^{k} \pi_M (v_M^{p+1})    \partial_x^{k-1} (v_M^{p+1}) 
+ \sum_{\substack{(\alpha_1,\dots, \alpha_{p+1})\\\alpha_1+\dots +\alpha_{p+1}=k-1\\\alpha_j<k-1}}
c_{\alpha_1, \dots, \alpha_{p+1}} \int
\partial_x^{k} \pi_M (v_M^{p+1})    \prod_{j=1}^{p+1} \partial_x^{\alpha_j} v_M\\
= \int \partial_x^{k} \pi_M (v_M^{p+1})    \partial_x^{k-1} \pi_M (v_M^{p+1}) 
- \sum_{\substack{(\alpha_1,\dots, \alpha_{p+1})\\\alpha_1+\dots +\alpha_{p+1}=k-1\\\alpha_j<k-1}}
c_{\alpha_1, \dots, \alpha_{p+1}} \int
\partial_x^{k-1} \pi_M (v_M^{p+1})    \partial_x\Big (\prod_{j=1}^{p+1} \partial_x^{\alpha_j} v_M\Big ) \end{multline}
where we used that $\pi_M$ is a projector. Now, the first term on the r.h.s. in \eqref{Lenvis} is zero, as an integral of an exact derivative and the other terms, after expanding the 
derivative of a product, can be expressed as the integral of densities belonging to $\Theta_k$ up to the projector $\pi_M$.
Summarizing in order to get \eqref{lecmer} it is sufficient to estimate
\begin{equation}\label{prodproj}\Big |\int_0^T \int \Big( \prod_{j=0}^m \partial_x^{\gamma_j} v_M\Big)  \pi_M \Big(\prod_{i=m+1}^{2p+1} \partial_x^{\beta_i} v_M\Big )\Big |
\leq  C 
+ C\|\varphi\|_{H^{(k-\frac 12)^-}}^{(\frac{4k-8}{2k-3})^+}\end{equation}
where $\sum_{j=0}^m \gamma_j+ \sum_{i=m+1}^{2p+1} \beta_i=2k-1$ and at least three numbers in $\{\gamma_j, \beta_i\}$ are non-zero. We can assume the following alternative:
the four factors with higher derivatives belong to the first product outside the projector, the four factors with higher derivatives belong to the second product on which
we have the action of the projector or the four factors with higher derivatives are distributed between the two groups.
Hence, depending on the structure of the density appearing on the the l.h.s. in \eqref{prodproj} and by assuming that the numbers $\gamma_j$ are decreasing
as well as the numbers $\beta_i$, we can associate with the density on the l.h.s. in \eqref{prodproj} a number $J\in \{0,1,2,3,4\}$ defined as follows:
\begin{align*}
 & J=  4  \hbox{ in case }  \{\gamma_k, k=0,1,2,3\}\geq \max \{\gamma_j, \beta_i| j=4,\dots, m, i=m+1,\dots, 2p+1\}\\
& J=0  \hbox{ in case }  \min \{\beta_k, k=m+1,m+2, m+3, m+4\}\geq \max \{\gamma_j, \beta_i| j=0,\dots, m, i=m+5,\dots, 2p+1\}\\
& 0<J<4 \hbox{ in case }  \begin{multlined}[t]
 \min \{\gamma_h, \beta_k| h=0,\dots, J-1, k=m+1,\dots , m+4-J\}\ \geq \\
\max \{\gamma_j, \beta_i| j=J,\dots, m, i=m+4-J,\dots, 2p+1\}.\end{multlined}
      \end{align*}
In the case $J=4$ we estimate
\[
\Big|\int_0^T \int \Big (\prod_{j=0}^m \partial_x^{\gamma_j} v_M\Big ) \pi_M \Big (\prod_{i=m+1}^{2p+1} \partial_x^{\beta_i} v_M\Big )\Big|
\leq \Big  \| \prod_{j=0}^m \partial_x^{\gamma_j} v_M \Big  \|_{L^1((0,T);L^1)}
\Big \| \pi_M (\prod_{i=m+1}^{2p+1} \partial_x^{\beta_i} v_M) \Big \|_{L^\infty((0,T); L^\infty)}
\]
and  by Sobolev embedding and H\"older inequality, we proceed with
\begin{align*}
(\dots) \leq & C \prod_{j=0}^3 \|\partial_x^{\gamma_j} v_M \|_{L^4((0,T);L^4)}
 \prod_{j=4}^m \|\partial_x^{\gamma_j} v_M \|_{L^\infty((0,T);L^\infty)} \Big  \| \pi_M \prod_{i=m+1}^{2p+1} \partial_x^{\beta_i} v_M \Big \|_{L^\infty((0,T); H^1)}\\
\leq  & C \prod_{j=0}^3 \|\partial_x^{\gamma_j} v_M \|_{L^4((0,T);L^4)}
 \prod_{j=4}^m \|\partial_x^{\gamma_j} v_M \|_{L^\infty((0,T);H^1))}  \prod_{i=m+1}^{2p+1} \|\partial_x^{\beta_i} v_M\|_{L^\infty((0,T); H^1)}
\end{align*}
where we used that $H^1$ is an algebra and continuity of $\pi_M$ on $H^1$. The proof
of \eqref{prodproj} can be now completed exactly following the proof of \eqref{GestimateCB}. A similar argument works for  $J=0$. In the case $0<J<4$, we alter the argument above as follows
\[
\Big|\int_0^T \int \Big  (\prod_{j=0}^m \partial_x^{\gamma_j} v_M \Big ) \pi_M \Big (\prod_{i=m+1}^{2p+1} \partial_x^{\beta_i} v_M\Big ) \Big|
\leq\Big  \| \prod_{j=0}^m \partial_x^{\gamma_j} v_M \Big \|_{L^\frac 4J((0,T);L^\frac 4J)}
\Big \| \pi_M (\prod_{i=m+1}^{2p+1} \partial_x^{\beta_i} v_M) \Big \|_{L^\frac 4{4-J}((0,T);L^\frac 4{4-J})}.
\]
By continuity of $\pi_M$ on $L^p$ with $p\in(1, \infty)$ we can remove the projector,
and then we can apply H\"older inequality twice in order to place  the four higher derivatives in $L^4$ and  the other ones in 
$L^\infty$. Then we proceed exactly following the proof of \eqref{GestimateCB}.

\subsection{Proof of \eqref{Gestimateprime}}
Due to the structure of the functionals $\bar {\mathcal G}_{k,M}^T$ we first have to prove that, if we consider a given density belonging to $\Omega_k$ then the corresponding space-time integrals computed along $v_M(t,x)$
converge to the same integral computed along $v(t,x)$. This follows from the expression under consideration being multilinear. Indeed by following the same computations we did to get \eqref{GestimateDB}, we estimate the difference of the two expressions we are interested in, by the product of several factors, where each factor involves norms of $v_{M}(t,x)$, $v(t,x)$ in either  $H^\sigma, \sigma<k-\frac 12$ or $L^6((0,T);W^{s,6}),  s<k-\frac 12$) and one factor involves the norm of $v-v_{M}$ in either $H^\sigma,
\sigma<k-\frac 12$ or $L^6((0,T);W^{s,6}),  s<k-\frac 12$. Then we conclude that the difference converges to zero by the result in Section~\ref{L6}.
The proof of \eqref{Gestimateprime} will follow provided that we prove the same convergence property as above if we now have a density belonging to $\Theta_k$
with the extra property that in the density, when computed along $v_M(t,x)$, the projector $\pi_M$ appears in front the product of a group of derivatives of $v_M(t,x)$. By using the identity $Id=\pi_M+\pi_{>M}$,  we remove the projector $\pi_M$ at the expense of a remainder. Without the remainder, convergence for the corresponding terms follows the same lines as above
in the case of a density in $\Omega_k$, as in the proof of \eqref{GestimateCB}. We now deal with a remainder 
given by the density belonging to $\Theta_k$ computed along $v_M(t,x)$ with the projector $\pi_{>M}$ in front of a product of derivatives of $v_M(t,x)$.
In order to show that this remainder goes to zero as $M\rightarrow \infty$ notice that every factor involved in the product on which the operator $\pi_{>M}$ acts, can itself be decomposed
by using the identity $Id=\pi_{cM}+\pi_{>cM}$ where $c$ is a suitable small constant. Once the
decomposition of each factor is done following this last identity, then we develop the product and notice that we get non trivial contributions after the application of the projector $\pi_{>M}$  only for the terms where at least one factor involves $\pi_{>cM} v_M(t,x)$. Then we conclude following the same chain of inequalities needed in order to get
\eqref{GestimateCB}. All factors that we get will be bounded according to estimates in Section~\ref{L6}, except one that appears with a projector
$\pi_{>cM}$ and needs to be computed in one of the norms  $H^\sigma,\sigma<k-\frac 12$ or $L^6((0,T);W^{s,6}),  s<k-\frac 12$. As we are allowed to loose $\epsilon$ derivatives in our estimates, we deduce by a straightforward argument
that we are converging to zero as $M\rightarrow \infty$.
\section{Proof of Theorem \ref{quasiinvariance}}\label{sectqi}
We denote by $d\mu_{k}$ (resp. $d\mu_{k,M}^{\perp})$ the Gaussian measure induced by the random series
\begin{equation*}
\omega\longmapsto \sum_{n\in\Z}\frac{g_n(\omega)}{(1+n^2)^{k/2}}\, e^{inx} \,\,\,\Big(\text{resp.}\,\,  \sum_{n\in\Z, |n|>M}\frac{g_n(\omega)}{(1+n^2)^{k/2}}\, e^{inx}\Big) \,,
\end{equation*}
and define the following two measures: 
$$d\rho_{k}=\chi_R({\mathcal H}(u)) d\mu_{k}, \quad d\rho_{k,M}=\chi_R({\mathcal H}(\pi_M u))d\mu_{k}.$$
Next, we shall also use the following representation
\begin{equation}\label{decompmuk}
d\mu_{k}=\gamma_M \exp{(-\|\pi_M u\|_{H^k}^2)}du_1\dots du_M \times d\mu_{k,M}^\perp\end{equation}
where $\gamma_M$ is a suitable renormalization constant and $du_1\dots du_M$
is the Lebesgue measure on $\C^M$. Next, changing variable,
\begin{align}\label{impcha}
\rho_{k, M}{(\Phi_M(T)A)}= & \gamma_M \int_{\Phi_M(t)A} \chi_R({\mathcal H}(\pi_M u))e^{-\|\pi_M u\|_{H^k}^2} du_1\dots du_M \times d\mu_{k,M}^\perp\\
\nonumber = & \gamma_M \int_{A} \chi_R({\mathcal H}(\pi_M(\Phi_M (T)u)))e^{-(\|\pi_M(\Phi_M (t)(u))\|_{H^k}^2} du_1\dots du_M \times d\mu_{k,M}^\perp\\
\nonumber = & \int_A \chi_R({\mathcal H}(\pi_M(\Phi_M (T)u)))e^{{\|\pi_M u\|_{ H^k}^2}-\|\pi_M (\Phi_M(T)u)\|_{H^k}^2} d\mu_{k}
\end{align}
where $A$ is a Borel subset of $H^{(k-\frac 12)^-}$ and we used \eqref{decompmuk} in the last identity; hence
\begin{align}\label{impcha1}
\,
\rho_{k, M}{(\Phi_M(T)A)}= &
\int_A \chi_R({\mathcal H}(\pi_M(\Phi_M (T)u))) e^{-\bar {\mathcal R}_k (\pi_M u)+\bar {\mathcal R}_k (\pi_M (\Phi_M(T)u))}
e^{\bar {\mathcal E}_k (\pi_M u)-\bar {\mathcal E}_k (\pi_M (\Phi_M(T)u)} d\mu_k\\
\nonumber= & \int_A \chi_R({\mathcal H}(\pi_M(\Phi_M (T)u)))e^{-\bar {\mathcal R}_k (\pi_M u)+\bar {\mathcal R}_k (\pi_M (\Phi_M(T)u)}
e^{- \bar {\mathcal G}_{k,M}^T (\pi_M u)} d\mu_k
\end{align}
where $\bar {\mathcal E}_k$,  $\bar {\mathcal G}_{k,M}^T$ and $\bar {\mathcal R}_k$ are the functionals in Theorem \ref{basicB}. Set
\begin{align*}
  f_{T, M}(u)= & \chi_R({\mathcal H}((\pi_M(\Phi_M (T)u))) e^{-\bar {\mathcal R}_k (\pi_M u)+\bar {\mathcal R}_k (\pi_M (\Phi_M(T)u)}
e^{-\bar {\mathcal G}^T_{k,M} (\pi_M u)}\\
  f_T(u)= & \chi_R({\mathcal H}(\Phi (T)u)) e^{-\bar {\mathcal R}_k (u)+\bar {\mathcal R}_k ((\Phi(T)u)}
e^{-\bar {\mathcal G}_{k,\infty}^T(u)}
\end{align*}
{ then by \eqref{Gestimateprime} and continuity of $\bar {\mathcal R}_k$ on $H^{(k-\frac 12)^-}$ (which follows by minor modifications of the proof of
\eqref{RestimateB}), we get $f_{T, M}(u)\rightarrow f_T(u)$ almost surely w.r.t $\mu_k$, when $M\rightarrow \infty$.}
 Moreover we claim that, for all $q\in [1, \infty)$,
\begin{equation}\label{claim}
  \sup_M \|f_{T, M}(u)\|_{L^q(\mu_k)}<\infty\,,
\end{equation}
and therefore, as $\mu_k$ is a finite measure and by using Egoroff theorem, we can upgrade to convergence in $L^q$   namely
\begin{equation}\label{Lpconv}
f_{T,M}(u) \overset{L^q(\mu_k)}\longrightarrow f_T(u)\in L^q(\mu_k) \hbox{ as } M\rightarrow \infty
\end{equation}
and we conclude, by the conservation of the Hamiltonian $\mathcal H$, $
\chi_R({\mathcal H}(u))g_T(u)=f_T(u)\in L^q(\mu_k)$.
Next we prove \eqref{claim}. {  From conservation of the Hamiltonian, ${\mathcal H}((\pi_M(\Phi_M (T)u))={\mathcal H}(\pi_M u)$ and due to $\chi_R$ there exists $C>0$, uniform w.r.t. $M$, such that
the support of $f_{T,M}$ is contained in $\{u\in H^{(k-\frac 12)^-}|\|\pi_M u\|_{H^1}<C\}$. By combining this fact with \eqref{carb} (where we choose $s=(k-\frac 12)^-$ and $\varphi=\pi_M u$)
we get that, with another constant $C>0$ uniform w.r.t. $M$, for every $u$ in the support of $f_{T,M}$,
\[
\sup_{\tau\in [0, T]}\|\pi_M \Phi_M (\tau)u\|_{H^{(k-\frac 12)^-}}\leq C \|\pi_M u\|_{H^{(k-\frac 12)^-}}\,.
\]
Gathering all together and recalling  \eqref{RestimateB} and \eqref{Gestimate} (where we choose $\varphi=\pi_M u$) we get, with $C>0$ depending on $R$, $T>0$ and uniform w.r.t. $M$
\[
  f_{T,M}(u)\leq \exp({C+C\|u \|_{H^{(k-\frac 12)^-}}^{(\frac{4k-8}{2k-3})^+}})\,.
\] }
As $(\frac{4k-8}{2k-3})<2$, we can apply classical Gaussian bounds in order to get
$$\exp({C\|u \|_{H^{(k-\frac 12)^-}}^{(\frac{4k-8}{2k-3})^+}})\in L^q(\mu_k )$$ and we conclude \eqref{claim}.
Going back to \eqref{impcha} and \eqref{impcha1} we get
\begin{equation}\label{limM}\rho_{k, M}{(\Phi_M(T)A)}=\int_A f_{T, M}(u) d\mu_k\end{equation}
and hence by passing to the limit as $M\rightarrow  \infty$ (at least formally) we get
\begin{equation}\label{limnoM}\rho_{k}{(\Phi(T)A)}=\int_A f_{T}(u) d\mu_k\end{equation}
where $f_T(u)\in L^q(\mu_k)$ and we could conclude the proof.
Proving \eqref{limM} $\Rightarrow$ \eqref{limnoM} requires some work: we will prove the following inequality 
\begin{equation}\label{limnoMless}\rho_{k}{(\Phi(T)A)}\leq \int_A f_{T}(u) d\mu_k\end{equation}
and the reversed one
\begin{equation}\label{limnoMmore}\rho_{k}{(\Phi(T)A)}\geq \int_A f_{T}(u) d\mu_k\end{equation}
which in turn imply \eqref{limnoM}.
Assume in the sequel that $A$ is a compact set in $H^{(k-\frac 12)^-}$, by classical approximation arguments this will be sufficient
to deal with any generic measurable set. In order to prove \eqref{limM} $\Rightarrow$ \eqref{limnoMless} with $A$ compact, we use the following property of the flows $\Phi_M(t)$, whose proof follows by combining Proposition
\ref{Cauchy} and the arguments in \cite{sigma}:
\begin{equation}
\forall A\subset H^{(k-\frac 12)^-} \hbox{ compact}, \forall \varepsilon>0 \quad \exists M_0\in \N \hbox{ s.t. } 
\Phi(T)A\subset \Phi_M(T) \big (A+B_\varepsilon^{(k-\frac 12)^-}\big) 
\quad \forall M>M_0,
\end{equation}
where $B_\varepsilon^s$ denotes the ball of radius $\varepsilon>0$ in $H^s$.
Then if $A$ compact  and $\varepsilon>0$ are given, we get
\[
  \rho_{k,M}{(\Phi(T)A)}\leq \rho_{k,M} \Big (\Phi_M(T) (A+B_\varepsilon^{(k-\frac 12)^-})\Big) \\=
  \int_{A+B_\varepsilon^{(k-\frac 12)^-}} f_{T, M}(u) d\mu_k \quad \forall M>M_0
\]
where we used \eqref{limM}. By using \eqref{Lpconv} we can pass to the limit as $M\rightarrow \infty$ and we get
$$\limsup_{M\rightarrow \infty} \rho_{k,M}{(\Phi(T)A)}\leq \int_{A+B_\varepsilon^{(k-\frac 12)^-}} f_{T}(u) d\mu_k, \quad \forall \varepsilon>0.$$
On the other hand one easily check that 
$\limsup_{M\rightarrow \infty} \rho_{k,M}{(\Phi(T)A)}= \rho_k (\Phi(T)A)$
and also, with $A$ compact, one can prove that 
$$\lim_{\varepsilon\rightarrow 0} \int_{A+B_\varepsilon^{(k-\frac 12)^-}} f_{T}(u) d\mu_k= \int_{A} f_{T}(u) d\mu_k$$
and \eqref{limnoMless} follows. In order to prove \eqref{limnoMmore} we take advantage of a property of the flows $\Phi_M(t)$ that follows from Proposition \ref{Cauchy}:
\begin{equation}\label{newvappr}
\forall A\subset H^{(k-\frac 12)^-} \hbox{ compact}, \forall \varepsilon>0 \quad \exists M_0\in \N \hbox{ s.t. }\,\, \forall M>M_0\,,\, \,
\Phi_M(T)A \subset \Phi(T)A+ B_\varepsilon^{(k-\frac 12)^-}\,.
\end{equation}
By combining this fact with \eqref{limM} we get for any compact set $A$:
$$\rho_{k,M}\big (\Phi(T)A+ B_\varepsilon^{(k-\frac 12)^-}\big)\geq \rho_{k,M} (\Phi_M(T)A) =
\int_{A} f_{T, M}(u) d\mu_k, \quad \forall M>M_0$$
and by passing to the limit as $M\rightarrow \infty$ on the l.h.s. and r.h.s. we get
$$\rho_{k}\big (\Phi(T)A+ B_\varepsilon^{(k-\frac 12)^-}\big)\geq 
\int_{A} f_{T}(u) d\mu_k.$$
From compactness of $A$, we easily get $\lim_{\varepsilon\rightarrow 0} \rho_{k}\big (\Phi(T)A+ B_\varepsilon^{(k-\frac 12)^-}\big )=\rho_{k}(\Phi(T)A)$
and \eqref{limnoMmore} follows.
\section{Appendix}
The aim of this appendix is to provide a proof of \eqref{tame}.
\subsection{A first reduction} 
Let $s\geq 1$ and $p\geq 3$ be integers. For the sake of completeness, we prove
\begin{equation*}
\Big\|\int_{0}^t S(t-\tau) \pi_M\Pi \Big(\Pi\big((v(\tau))^p\big)\partial_x v(\tau)\Big)d\tau\Big\|_{Y^s_T}
\leq CT^{\kappa}\|v\|_{Y^1_T}^p \|v\|_{Y^s_T},\quad \kappa>0,
\end{equation*}
for $T\leq 1$ and $C>0$ independent of $M$ and $\kappa$. The arguments that we will perform below are standard. 
Our only goal is to provide a complete argument for a reader unfamiliar with the $X^{s,b}$ machinery, as well as proving how to gain the positive power of $T$
at the r.h.s. (which was of importance for our analysis in Section~\ref{L6}). At the best of our knowledge the estimate above written in this form is not readily available in the literature, even if we closely follow \cite{CKSTT}.
As $\pi_M$ is bounded on $Y^s$, it suffices to prove 
\begin{equation*}
\Big\|\int_{0}^t S(t-\tau) \Pi (\partial_x v(\tau) \Pi  v^p(\tau))d\tau\Big\|_{Y^s_T}
\leq CT^{\kappa}\|v\|_{Y^1_T}^p \|v\|_{Y^s_T},\quad \kappa>0\,.
\end{equation*}
Recalling the definition of restriction spaces, proving the global in time estimate will be sufficient:
\begin{equation*}
\Big\|\psi(t) \int_{0}^t S(t-\tau) \Pi (\partial_x v(\tau) \Pi v^p(\tau))d\tau\Big\|_{Y^s}
\leq CT^{\kappa}\|v\|_{Y^1_T}^p \|v\|_{Y^s_T},\quad \kappa>0,
\end{equation*}
where $\psi\in C^\infty_0(\R)$ is such that $\psi\equiv 1$ on $[-1,1]$.
Using \cite[Lemma~3.1]{CKSTT}, we obtain that
\begin{equation*}
\Big\|\psi(t) \int_{0}^t S(t-\tau) \Pi (\partial_x v(\tau) \Pi v^p(\tau))d\tau\Big\|_{Y^s}
\leq C
\big\| \Pi (\partial_x v \Pi v^p )\big\|_{Z^s}\, ,
\end{equation*}
where by definition $\|u\|_{Z^s}=\|u\|_{X^{s,-\frac{1}{2}}}+\| \langle \tau+ n^3\rangle^{-1}\,\langle n\rangle^s \hat{u}(\tau,n)\|_{L^2_n L^1_\tau}$.
Therefore, proving 
\begin{equation}\label{MNMN}
\big\| \Pi ( \partial_x v \Pi v^p )\big\|_{Z^s}\leq CT^{\kappa}\|v\|_{Y^1_T}^p \|v\|_{Y^s_T},\quad \kappa>0
\end{equation}
will be enough. Its proof follows by combining the following propositions. The next statement is a slightly modified version of \cite[Theorem~3]{CKSTT}.
\begin{proposition}\label{tao1}
For  $s\geq 1$ there exists a constant $C>0$ such that
\begin{equation}\label{strongweak}
\|u^p\|_{X^{s-1,\frac{1}{2}}}\leq C\|u\|_{Y^s}\, 
\|u\|_{Y^1}^{p-1}\,.
\end{equation}
\end{proposition}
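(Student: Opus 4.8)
The plan is to prove Proposition \ref{tao1} by reducing it, via duality and the space--time Fourier transform, to a weighted $(p+1)$-linear convolution estimate on the KdV resonance hyperplane, and then to run the standard Bourgain-space calculus, using the algebraic resonance relation to redistribute derivatives and Bourgain's periodic Strichartz inequality to close the sum. First I would dualize: since $(X^{s-1,\frac12})^*=X^{1-s,-\frac12}$, it suffices to bound $|\int\!\!\int u^p\,\overline g\,dx\,dt|$ by $C\|u\|_{Y^s}\|u\|_{Y^1}^{p-1}$ for every $g$ with $\|g\|_{X^{1-s,-\frac12}}\le 1$. Passing to Fourier coefficients, this becomes a sum over $n_0=n_1+\dots+n_p$ and an integral over $\tau_0=\tau_1+\dots+\tau_p$ of $\overline{\widehat g(\tau_0,n_0)}\prod_{j=1}^p\widehat u(\tau_j,n_j)$. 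Writing $L_j=\langle\tau_j+n_j^3\rangle$ for $0\le j\le p$, the constraint $\tau_0=\sum_j\tau_j$ yields the resonance identity
\[
(\tau_0+n_0^3)-\sum_{j=1}^p(\tau_j+n_j^3)=n_0^3-\sum_{j=1}^p n_j^3=:\Omega ,
\]
so that $\max(L_0,L_1,\dots,L_p)\gtrsim\langle\Omega\rangle$. Since the $p$ inputs are all equal to $u$, I may exploit the full symmetry to order $|n_1|\ge|n_2|\ge\dots\ge|n_p|$, to note $|n_0|\lesssim|n_1|$, and to assign the $Y^s$ weight to the top factor and the $Y^1$ weight to the remaining $p-1$. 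This produces the frequency gain
\[
\frac{\langle n_0\rangle^{s-1}}{\langle n_1\rangle^{s}}\lesssim\langle n_1\rangle^{-1},
\]
so that, after transferring derivatives, every factor carries at most one derivative and the top factor even gains one.

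Next I would perform a dyadic decomposition of all frequencies $\langle n_j\rangle\sim N_j$ and modulations $L_j\sim M_j$, and split into the usual interaction regimes. In the high--low regime $|n_1|\gg|n_2|$ one has $|n_0|\sim|n_1|$, there is no genuine derivative loss, and I would estimate by H\"older's inequality, placing the high-frequency factors in $L^6_{t,x}$ and the remaining low-frequency ones in $L^\infty_{t,x}$, controlling each by its Bourgain-space norm through the transfer of Bourgain's estimate $(8.37)$ of \cite{B1}, namely $\|w\|_{L^6((0,T);L^6)}\lesssim\|w\|_{X^{\ep,\frac12}}$, the extra $\ep$ derivatives being absorbed by the $Y^s$/$Y^1$ weights. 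In the high--high regime $|n_1|\sim|n_2|$ the output frequency can be small and the product of frequencies entering $\Omega$ is large; here I would use $\langle\Omega\rangle\lesssim\max_j L_j$ to extract a power of the largest modulation, which, combined with the $\langle L_j\rangle^{-1/2}$ weights, furnishes exactly the smoothing needed to beat the loss, before closing with a bilinear $L^2_{t,x}$ estimate on the two top factors and the $L^6$ Strichartz bound on the rest. Summation of the resulting geometric series in the dyadic parameters uses the negative powers of $N_1$ gained above.

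The main obstacle is the endpoint regularity $b=\tfrac12$ combined with the high--high-to-low resonant interactions specific to the torus: on $\T$ the relevant bound is governed by the number of integer solutions of $n_0^3-\sum_j n_j^3=\Omega$, a divisor/lattice-point counting problem that is strictly harder than its analogue on $\R$ and for which the naive $b>\tfrac12$ algebra argument fails. This is precisely where I would follow \cite{CKSTT} closely, invoking their periodic bilinear and counting estimates; the only modification is the bookkeeping needed for a general even power $p$ and for the mixed $Y^s$/$Y^1$ distribution of weights. Finally, the auxiliary $\|\langle n\rangle^s\widehat u\|_{\ell^2_nL^1_\tau}$ component of the $Y^s$ norm, which is what makes the $b=\tfrac12$ endpoint usable, is treated by the same decomposition and is in fact easier, since its $L^1_\tau$ structure removes the delicate modulation integration and allows one to sum directly after applying the Strichartz and counting bounds established above.
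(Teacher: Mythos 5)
Your plan rests on the same core mechanism as the paper's proof --- the resonance identity $(\tau_0+n_0^3)-\sum_{j}(\tau_j+n_j^3)=n_0^3-\sum_j n_j^3$, a case analysis on which modulation dominates, symmetry to order $|n_1|\ge\cdots\ge|n_p|$, and Strichartz bounds to close --- but the execution you propose is considerably heavier than what is needed, and in one place you mislocate the difficulty. The paper does not dualize and performs no dyadic decomposition: it replaces $\hat u$ by $|\hat u|$, splits directly into the region where some input modulation satisfies $|\tau+n^3|\le 10p\,|\tau_k+n_k^3|$ (there the half power of the output modulation is transferred to that factor and one closes in physical space with $L^2H^{s-1}\times L^\infty L^\infty$ H\"older, Sobolev $H^1\subset L^\infty$ and the embeddings $Y^\sigma\subset L^\infty(\R;H^\sigma)$), and the complementary region where the output modulation dominates, so that $\langle\tau+n^3\rangle\lesssim|(\sum n_k)^3-\sum n_k^3|\lesssim |n_1|^2|n_2|$ by the purely algebraic \cite[Lemma~4.1]{CKSTT}; the weight then becomes $\langle n_1\rangle^{s}\langle n_2\rangle^{1/2}$ spread over the top two factors, and one closes with $L^\infty L^2\times L^4L^\infty\times L^4L^\infty\times (L^\infty L^\infty)^{p-3}$ together with Bourgain's $L^4$ bound $\|w\|_{L^4}\le C\|w\|_{X^{0,1/3}}$. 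In particular, no lattice-point or divisor counting enters this multilinear estimate: the ``high--high-to-low'' difficulty you describe is specific to the bilinear estimate (Proposition~\ref{tao2}), and even there the paper proceeds by elementary means; your appeal to ``periodic bilinear and counting estimates'' and to a ``bilinear $L^2_{t,x}$ estimate on the two top factors'' is an over-complication, and if by the latter you merely mean Cauchy--Schwarz after two $L^4_{t,x}$ (or $L^6$) bounds, your argument collapses onto the paper's. Two smaller points: your closing remark about the $\ell^2_nL^1_\tau$ component as an output to be estimated is misplaced here, since in this proposition the $Y$ norms occur only on the inputs, where their role is precisely to supply the $L^\infty_t H^\sigma$ control used above (the $Z^s$ output with its $L^1_\tau$ piece arises only in Proposition~\ref{tao2}); and your dyadic summation does go through, since in the high--high regime the net weight is $\langle n_0\rangle^{s-1}\langle n_1\rangle^{1-s}\langle n_2\rangle^{-1/2}\lesssim\langle n_1\rangle^{-1/2}$, but it is simply unnecessary.
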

We shall also need the following bilinear estimate. 
\begin{proposition}\label{tao2}
For  $s\geq 1$ there exists $C>0$ such that, for $T\in (0,1)$,
\[
\big\| \Pi (\Pi u_1 \Pi u_2 )\|_{Z^s}
\leq
CT^\kappa \big(
\|u_1\|_{X^{s-1,\frac{1}{2}}_T}\|u_2\|_{X^{0,\frac{1}{2}}_T}
+
\|u_1\|_{X^{0,\frac{1}{2}}_T}\|u_2\|_{X^{s-1,\frac{1}{2}}_T}
\big), \quad \kappa>0 \,.
\]
\end{proposition}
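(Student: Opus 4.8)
The plan is to derive Proposition~\ref{tao2} from the algebraic resonance structure of the periodic Airy equation, after a standard duality reduction to a frequency-space multiplier bound, and to locate the power $T^\kappa$ in a time-localization argument. Write $\sigma_0=\tau+n^3$, $\sigma_1=\tau_1+n_1^3$, $\sigma_2=\tau_2+n_2^3$ for the modulations attached to the output frequency $n$ and to the input frequencies $n_1,n_2$, with $n=n_1+n_2$ and $\tau=\tau_1+\tau_2$. Because both factors carry the projector $\Pi$ we always have $n,n_1,n_2\neq0$, so the identity
\begin{equation*}
\sigma_0-\sigma_1-\sigma_2=n^3-n_1^3-n_2^3=3\,n\,n_1\,n_2
\end{equation*}
forces $\max(\langle\sigma_0\rangle,\langle\sigma_1\rangle,\langle\sigma_2\rangle)\gtrsim|n\,n_1\,n_2|$. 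The gain of $|n\,n_1\,n_2|^{1/2}$ coming from whichever modulation is largest is what drives the whole estimate; the vanishing of the zero mode is used precisely here, while the hypothesis $s\geq1$ will enter the frequency case analysis below.

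First I would reduce the two constituents of the $Z^s$ norm to a single weighted bilinear form: it suffices to control $\|\Pi(\Pi u_1\Pi u_2)\|_{X^{s,-1/2+\delta}}$ for a small $\delta>0$, since the $X^{s,-1/2}$ part of $Z^s$ then follows a fortiori and the auxiliary $\ell^2_nL^1_\tau$ part follows by Cauchy--Schwarz in $\tau$, factoring $\langle\sigma_0\rangle^{-1}=\langle\sigma_0\rangle^{-1/2-\delta}\langle\sigma_0\rangle^{-1/2+\delta}$ and using $\langle\sigma_0\rangle^{-1/2-\delta}\in L^2_\tau$. Plancherel and duality against a test function $h\in L^2_{n,\tau}$ with $\|h\|_{L^2}=1$ turn the claim into a bound for
\begin{equation*}
\Lambda=\sum_{\substack{n=n_1+n_2\\ n,n_1,n_2\neq0}}\int_{\tau=\tau_1+\tau_2}\frac{\langle n\rangle^{s}}{\langle\sigma_0\rangle^{1/2}}\,\widehat{u_1}(\tau_1,n_1)\,\widehat{u_2}(\tau_2,n_2)\,\overline{h(\tau,n)}\,.
\end{equation*}
Writing $f_1=\langle n_1\rangle^{s-1}\langle\sigma_1\rangle^{1/2}|\widehat{u_1}|$ and $f_2=\langle\sigma_2\rangle^{1/2}|\widehat{u_2}|$ (so $\|f_1\|_{L^2}=\|u_1\|_{X^{s-1,1/2}}$, $\|f_2\|_{L^2}=\|u_2\|_{X^{0,1/2}}$), matters reduce to showing that
\begin{equation*}
m=\frac{\langle n\rangle^{s}}{\langle n_1\rangle^{s-1}\,\langle\sigma_0\rangle^{1/2}\langle\sigma_1\rangle^{1/2}\langle\sigma_2\rangle^{1/2}}
\end{equation*}
is the symbol of a bounded trilinear form on $L^2\times L^2\times L^2$. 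The two summands on the right-hand side of the proposition correspond to placing the $\langle\cdot\rangle^{s-1}$ weight on whichever input carries the higher frequency, so that $\langle n\rangle^{s}$ costs at most one net derivative.

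To bound $m$ I would run the usual double case analysis. In the frequency variables one separates the high-high-to-low regime $|n_1|\sim|n_2|\gg|n|$ from the high-low regimes $|n|\sim\max(|n_1|,|n_2|)$; in either case, putting $\langle\cdot\rangle^{s-1}$ on the largest frequency leaves only one derivative $\sim\langle n_{\max}\rangle$ to be absorbed. In the modulation variables one splits according to which of $\langle\sigma_0\rangle,\langle\sigma_1\rangle,\langle\sigma_2\rangle$ dominates, extracting a fraction of $|n\,n_1\,n_2|^{1/2}$ from that modulation to pay for the derivative and spending the two remaining half-modulation weights on Cauchy--Schwarz together with Bourgain's $L^4_{t,x}$ Strichartz estimate for the periodic Airy group (the $X^{0,1/3}\hookrightarrow L^4$ embedding, in the same spirit as the $L^6$ bound \eqref{L666}); this is precisely the mechanism already behind Proposition~\ref{tao1}. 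The genuinely delicate configurations are the near-resonant ones, where all three modulations are comparably small while $n_1,n_2$ are large and nearly opposite: there the $\langle\sigma\rangle^{\pm1/2}$ bookkeeping is logarithmically borderline, and this is exactly why $Z^s$ and $Y^s$ carry the auxiliary $\ell^2_nL^1_\tau$ component, which replaces the divergent $L^2_\tau$ sum by a convergent one.

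The power $T^\kappa$ is not produced inside the multiplier bound but by time localization, and extracting it uniformly over all the regions above is the step I expect to be the main obstacle. The idea is that the resonance identity supplies more modulation decay than is needed except on the resonant set: in every region at least one of the two \emph{input} modulations is non-dominant (if $\sigma_0$ dominates, both inputs are; if $\sigma_1$ or $\sigma_2$ dominates, the other input is), and hence carries a small surplus $\langle\sigma_i\rangle^{-\eta}$ which can be traded, via the localization bound $\|u\|_{X^{\cdot,1/2-\eta}_T}\lesssim T^{\eta}\|u\|_{X^{\cdot,1/2}_T}$ valid for $T\in(0,1)$, for a factor $T^{\eta}$; equivalently one performs the space-time estimates on $(-T,T)$ and invokes H\"older in time against the $L^4_{t,x}$ Strichartz bound. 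The difficulty is that which input is non-dominant varies from region to region, so one must organize the decomposition so that a fixed positive power of $T$ survives in all cases simultaneously, the borderline resonant region being again handled through the $\ell^2_nL^1_\tau$ norm rather than the modulation weights. Once this is arranged, summing the contributions from all regions and passing from the full-line estimate to the restriction spaces $X^{\cdot}_T$ yields the stated inequality with $\kappa>0$, completing the proof modulo the routine multiplier verifications.
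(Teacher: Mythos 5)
Your overall architecture --- duality, the resonance identity $n^3-n_1^3-n_2^3=3nn_1n_2$ with all three frequencies nonzero, the three-way case analysis on which modulation dominates, Bourgain's $L^4$ estimate $X^{0,\frac13}\subset L^4_{t,x}$, and the extraction of $T^\kappa$ from the slack between $b=\frac13$ and $b=\frac12$ on at least one input factor --- coincides with the paper's. The genuine gap is your opening reduction, namely the claim that it suffices to bound $\|\Pi(\Pi u_1\Pi u_2)\|_{X^{s,-\frac12+\delta}}$ for some $\delta>0$ and then recover both components of the $Z^s$ norm by Cauchy--Schwarz in $\tau$. That stronger bilinear estimate is false for every $\delta>0$ and every $s$, even with the factor $T^\kappa$. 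Take $u_1=\psi(t)e^{i(Nx-N^3t)}$ and $u_2=\psi(t)e^{i(x-t)}$ with $\psi\in C^\infty_0(-T,T)$ fixed: these are time-localized free waves, so $\|u_1\|_{X^{s-1,\frac12}}\sim N^{s-1}$ and $\|u_2\|_{X^{0,\frac12}}\sim 1$, while the product is supported on the single output frequency $n=N+1$ with output modulation $\langle\tau+n^3\rangle\sim 3N^2$ on the bulk of its $\tau$-support; hence
\begin{equation*}
\|u_1u_2\|_{X^{s,-\frac12+\delta}}\sim N^{s}\cdot N^{-1+2\delta}=N^{s-1+2\delta},
\end{equation*}
and the ratio to the right-hand side of the proposition blows up like $N^{2\delta}$. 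This low-high interaction, in which the large modulation forced by the resonance lands entirely on the output, is exactly why the output must be measured at $b'=-\frac12$ and why $Z^s$ carries the auxiliary $\langle\tau+n^3\rangle^{-1}\langle n\rangle^s\hat u\in \ell^2_nL^1_\tau$ component: that component cannot be obtained a fortiori from a $-\frac12+\delta$ bound, but has to be estimated directly (as in \cite{CKSTT}, using that on the dangerous set $\langle\tau+n^3\rangle\sim|nn_1n_2|$ is pinned by the spatial frequencies, so the $\tau$-integration in the $L^1_\tau$ norm effectively runs over a set of measure controlled by the input modulations). The paper's proof works at $b'=-\frac12$ exactly throughout and never passes through $-\frac12+\delta$.

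A secondary inaccuracy: the ``near-resonant configurations where all three modulations are comparably small while $n_1,n_2$ are large and nearly opposite'' that you flag as the delicate case do not occur. Since $\Pi$ forces $n,n_1,n_2\neq0$ and the two largest of the three frequencies are always comparable, one has $|n\,n_1\,n_2|\gtrsim \langle n_{\max}\rangle^2$, so at least one modulation is always $\gtrsim\langle n_{\max}\rangle^2$; there is no small-modulation resonant set, and the worst case is the one exhibited above. Apart from these two points, your multiplier case analysis and the mechanism producing $T^\kappa$ (in each region at least one input factor is measured only in $X^{\cdot,\frac13}$, and $\|w\|_{X^{\cdot,\frac13}_T}\leq CT^\kappa\|w\|_{X^{\cdot,\frac12}_T}$) are the ones used in the paper.
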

Let us see how \eqref{MNMN} is a consequence of both propositions: from Proposition~\ref{tao2} we get
\[
\big\| \Pi (\partial_x v \Pi v^p)\big\|_{Z^s}
=
\big\| \Pi( \Pi \partial_x v \Pi v^p)\big\|_{Z^s}
\leq
CT^\kappa \big(
\|v^p\|_{X^{s-1,\frac{1}{2}}}\|\partial_x v\|_{X^{0,\frac{1}{2}}}
+
\| v^p\|_{X^{0,\frac{1}{2}}}\|\partial_x v\|_{X^{s-1,\frac{1}{2}}}
\big).
\]
Next, using Proposition~\ref{tao1}, we get
$$
\|v^p\|_{X^{s-1,\frac{1}{2}}}\leq C\|v\|_{Y^s}\, 
\|v\|_{Y^1}^{p-1}\, ,\quad
\| v^p\|_{X^{0,\frac{1}{2}}}
\leq C\|v\|_{Y^1}^p\,
$$
and we get \eqref{MNMN}.
\begin{proof}[Proof of Proposition~\ref{tao1}]
Write 
\begin{equation}\label{fouri}
{\mathcal F}(u^p)(\tau,n)= \int_{\tau=\tau_1+\cdots  +\tau_p}\,\,\,
\sum_{n=n_1+\cdots +n_p}\,\,
\prod_{k=1}^p \hat{u}(\tau_k,n_k)\,,
\end{equation}
where ${\mathcal F}$ and $\hat u$ denote the space time Fourier transform (continuous in time and discrete in space).
\begin{equation}\label{obj}
\|u^p\|_{X^{s-1,\frac{1}{2}}}^2=\int_{\R}\sum_{n\in\Z }\langle n\rangle^{2(s-1)} \langle \tau+n^3\rangle \, |{\mathcal F}(u^p)(\tau,n)|^2\, d\tau\,.
\end{equation}
Notice that the r.h.s. in \eqref{obj} may be bounded with
\begin{equation}\label{obj1}
\int_{\R}\sum_{n\in\Z }\langle n\rangle^{2(s-1)} \langle \tau+n^3\rangle \big (\int_{\tau=\tau_1+\cdots  +\tau_p}\,\,
\sum_{n=n_1+\cdots +n_p}\,\,
\prod_{k=1}^p |\widehat{u}(\tau_k,n_k)|\big )^2\, d\tau.
\end{equation}
Hence if we define $w(t,x)$ by $\hat{w}(\tau,n)=|\hat{u}(\tau,n)|$ we get $\|u\|_{X^{s,b}}=\|w\|_{X^{s,b}}$, $\|u\|_{Y^s}=\|w\|_{Y^s}$, and we are reduced to estimate 
\begin{equation}\label{obj2}
\int_{\R}\sum_{n\in\Z }\langle n\rangle^{2(s-1)} \langle \tau+n^3\rangle \, 
\big (\int_{\tau=\tau_1+\cdots  +\tau_p}\,\,
\sum_{n=n_1+\cdots +n_p}\,\,
\prod_{k=1}^p \widehat{w}(\tau_k,n_k)\big )^2 d\tau\,\,\,.
\end{equation}
Next we split the domain of integration and we consider first the contribution to \eqref{obj2} in the region
\begin{equation}\label{susreg1}
 |\tau+n^3|\leq 10p  |\tau_1+n_1^3|.
 \end{equation}
If we define $w_1$ by $\widehat{w_1}(\tau,n)=\langle \tau+n^3\rangle^{\frac{1}{2}}\, \widehat{w}(\tau,n)$,
then the contribution to \eqref{obj2} in the region \eqref{susreg1} can be controlled 
in the physical space variables as follows
\begin{align*}
C\|w_1 w^{p-1}\|_{L^2(\R;H^{s-1})}^2
\leq & C\big(\|w_1\|_{L^2(\R; H^{s-1})}^2 \|w^{p-1}\|_{L^\infty(\R; L^\infty)}^2+\|w_1\|_{L^2(\R; L^\infty)}^2\|w^{p-1}\|_{L^\infty(\R; H^{s-1})}^2\big)\\
\leq & C \big(\|w\|_{X^{s-1, \frac 12}}^2 \|w\|_{L^\infty(\R; H^1)}^{2(p-1)} + \|w_1\|_{L^2(\R; H^1)}^2\|w\|_{L^\infty(\R; H^{s-1})}^2\|w\|_{L^\infty(\R; H^{1})}^{2(p-2)}\big)
\end{align*}
where we have used standard product rules and Sobolev embedding $H^1\subset L^\infty$.
We proceed with
\[
  (\dots) \leq C \big(\|w\|_{X^{s-1, \frac 12}}^2 \|w\|_{Y^1}^{2(p-1)} +  \|w_1\|_{X^{1,\frac 12}}^2\|w\|_{Y^{s-1}}^2\|w\|_{Y^1}^{2(p-2)} \big)
  \]
where we used $Y^1\subset L^\infty(\R; H^1)$, $Y^{s-1}\subset L^\infty(\R; H^{s-1})$.
 Notice that we have a better estimate, when compared with 
\eqref{strongweak},
in the region \eqref{susreg1}.
Similarly, we can evaluate the contributions to \eqref{obj2} of the regions
$$
| \tau+n^3|\leq 10 p| \tau_k+n_k^3|,\quad 2\leq k\leq p\,.
$$  
Therefore, we may assume that the summation and the integration in \eqref{obj2} is performed in the region 
\begin{equation}\label{suslastreg}
\max_{1\leq k\leq p}|\tau_k+n_k^3|\leq  \frac{1}{10p} |\tau+n^3|\,.
\end{equation}
Write 
$$
(\tau+n^3)-\sum_{k=1}^p(\tau_k+n_k^3)=\Big(\sum_{k=1}^p n_k\Big)^3-\sum_{k=1}^p n_k^3\,,
$$
therefore in the region \eqref{suslastreg} we have
$$
\Big|\Big(\sum_{k=1}^p n_k\Big)^3-\sum_{k=1}^p n_k^3\Big|\geq |\tau+n^3|-\sum_{k=1}^p |\tau_k+n_k^3|\geq \frac{9}{10}|\tau+n^3|
$$
hence 
$$
\langle \tau+n^3\rangle \leq C\Big|\Big(\sum_{k=1}^p n_k\Big)^3-\sum_{k=1}^p n_k^3\Big|\,.
$$
By symmetry we can assume $|n_1|\geq |n_2|\geq \cdots \geq |n_k|$ and by using \cite[Lemma~4.1]{CKSTT}, we obtain that 
$$
\Big|\Big(\sum_{k=1}^p n_k\Big)^3-\sum_{k=1}^p n_k^3\Big|\leq C |n_1|^2 |n_2|.
$$
Consequently in the region \eqref{suslastreg} we get $\langle \tau+n^3\rangle \leq C \langle n_1\rangle^2 \langle n_2\rangle$, and the corresponding contribution to \eqref{obj2} can be estimated as
\begin{equation}\label{CC1}
C\, \int_{\R}\sum_{n\in\Z }
\,
\big (\int_{\tau=\tau_1+\cdots  +\tau_p}\,\,
\sum_{n=n_1+\cdots +n_p}
\,
\langle n_1\rangle^{s} \langle n_2\rangle^\frac 12 \, 
\prod_{k=1}^p (\widehat{w}(\tau_k,n_k)\big )^2 \, d\tau
\end{equation}
If we define $w_1$, $w_2$ by $\widehat{w_1}(\tau,n)=\langle n\rangle^{s} \widehat{w}(\tau,n)$,
 $\widehat{w_2}(\tau,n)=\langle n\rangle^{\frac{1}{2}} \widehat{w}(\tau,n)$, going back to physical space variables, we estimate \eqref{CC1} as
\begin{align*}
C\|w_1  w_2  w^{p-2}\|_{L^2(\R; L^2)}^2 \leq &
C\|w_1\|_{L^\infty(\R; L^2)}^2\|w_2\|_{L^4(\R; L^\infty)}^2\|w\|_{L^4(\R; L^\infty)}^2\|w\|_{L^\infty(\R; L^\infty)}^{2(p-3)}\\
\leq & C \|w\|_{L^\infty(\R; H^s)}^2\|w_2\|_{L^4(\R; W^{\frac 12,4})}^2\|w\|_{L^4(\R;W^{1,4})}^2\|w\|_{L^\infty(\R; H^1)}^{2(p-3)}.
\end{align*}
Hence by using $Y^1\subset L^\infty(\R;H^1)$ and $Y^s\subset L^\infty(\R;H^s)$,
along with the estimate
\begin{equation}\label{FUND}
\|u\|_{L^4(\R; L^4)}\leq C\|u\|_{X^{0,\frac{1}{3}}}
\end{equation}
established in the fundamental work \cite{B1}, we proceed with
\begin{equation*}
(\dots) \leq C \|w\|_{Y^s}^2\|w\|_{X^{1,\frac 13}}^2\|w\|_{X^{1,\frac 13}}^2\|w\|_{Y^1}^{2(p-3)}
\end{equation*}
and this concludes the proof.
\end{proof}
\begin{proof}[Proof of Proposition~\ref{tao2}]
We start with proving
\begin{equation}\label{KPV12}
\big\| \Pi (\Pi u_1\Pi u_2))\|_{X^{s,-\frac{1}{2}}}
\leq
CT^\kappa \big(
\|u_1\|_{X^{s-1,\frac{1}{2}}_T}\|u_2\|_{X^{0,\frac{1}{2}}_T}
+
\|u_1\|_{X^{0,\frac{1}{2}}_T}\|u_2\|_{X^{s-1,\frac{1}{2}}_T}
\big).
\end{equation}
As a first step we prove an estimate for global in time functions:
\begin{multline}\label{KPV1}
\big\| \Pi (\Pi u_1\Pi u_2))\|_{X^{s,-\frac{1}{2}}}
\leq
C\big(
\|u_1\|_{X^{s-1,\frac{1}{2}}}\times \|u_2\|_{X^{0,\frac{1}{3}}}+ \|u_1\|_{X^{s-1, \frac 13}} \times \|u_2\|_{X^{0,\frac 12}}
\\+
\|u_1\|_{X^{0,\frac{1}{3}}}\times \|u_2\|_{X^{s-1,\frac{1}{2}}}+ \|u_1\|_{X^{0,\frac 12}} \times \|u_2\|_{X^{s-1, \frac 13}}
\big).
\end{multline}
Notice that by comparing \eqref{KPV12} and \eqref{KPV1} in the second estimate we gain derivatives but we loose a positive power of the time $T$.
We will prove toward the end how to go from \eqref{KPV1} to \eqref{KPV12}.

The square of the left hand-side of \eqref{KPV1} may be written as 
$$
\int_{\R}\sum_{n\neq 0}\langle n\rangle^{2s}\langle \tau+n^3\rangle^{-1}|{\mathcal F}(\Pi u_1\Pi u_2)(\tau,n)|^2 \,d\tau\,,
$$
and moreover we easily have
$$
|{\mathcal F}(\Pi u_1\Pi u_2)(\tau,n)|
\leq 
\sum_{n_1\neq 0,n}\int_{\R} |\hat {u}_1(\tau_1,n_1)||\hat {u}_2(\tau-\tau_1,n-n_1)|d\tau_1\,.
$$
For $j=1,2$, define $w_j(t,x)$ with $\hat{w}_j(\tau,n)=|\hat{u}_j(\tau,n)|$. Then, we estimate the left hand-side of \eqref{KPV1} as
$$
\int_{\R}\sum_{n\neq 0}\frac{\langle n\rangle^{2s}}{\langle \tau+n^3\rangle}
\Big(
\sum_{n_1\neq 0,n}\int_{\R} \hat{w}_1(\tau_1,n_1)\hat{w}_2(\tau-\tau_1,n-n_1)d\tau_1
\Big)^2
d\tau\,
$$
which in turn by a duality argument is bounded with
$$
\sup_{\|v\|_{L^2_{t,x}}\leq 1}
\int_\R\sum_{n\neq 0}\frac{\langle n\rangle^{s}}{\langle \tau+n^3\rangle^{\frac{1}{2}}}
\sum_{n_1\neq 0,n}\int_{\R} \hat{w}_1(\tau_1,n_1)\hat{w}_2(\tau-\tau_1,n-n_1)
|\hat{v}(\tau,n)|
d\tau_1
d\tau\,.
$$
For $n\neq 0$ and $n_1\neq 0,n$, we have $\langle n\rangle^s\leq C
|n_1|^{\frac{1}{2}}
|n-n_1|^{\frac{1}{2}}
|n |^{\frac{1}{2}}
\big(
\langle n_1\rangle^{s-1}+\langle n-n_1\rangle^{s-1}
\big)$. Therefore, by using a symmetry argument, it suffices to evaluate
\begin{equation}\label{Lambda}
\sup_{\|u\|_{L^2_{t,x}}\leq 1}\int_{\R^2}
\sum_{n\neq 0,n_1\neq 0,n}
\frac{|n_1|^{\frac{1}{2}}
|n-n_1|^{\frac{1}{2}}
|n |^{\frac{1}{2}}}{ \langle\tau+n^3\rangle^{\frac{1}{2}}}
\,
\big(\langle n_1\rangle^{s-1} \hat{w}_1(\tau_1,n_1)\big) \hat{w}_2(\tau-\tau_1,n-n_1)
|\hat{v}(\tau,n)|
d\tau_1
d\tau\,.
\end{equation}
The key property for smoothing is the elementary bound 
$$
\max
\Big(\langle\tau+n^3\rangle^{\frac{1}{2}},
\langle\tau_1+n_1^3\rangle^{\frac{1}{2}},
\langle \tau-\tau_1+(n-n_1)^3\rangle^{\frac{1}{2}}\Big)\geq C
|n_1|^{\frac{1}{2}}
|n-n_1|^{\frac{1}{2}}
|n |^{\frac{1}{2}}\,.
$$
We will consider a splitting of the expression in \eqref{Lambda} in three contributions taking into account which term is the maximum in the above elementary bound. 
Notice that the contribution of \eqref{Lambda} in the following region
\begin{equation}\label{Lambdaprime}
\langle\tau+n^3\rangle^{\frac{1}{2}}\geq C|n_1|^{\frac{1}{2}}
|n-n_1|^{\frac{1}{2}}
|n |^{\frac{1}{2}}\,
\end{equation}
may be estimated as
$$
C\|w_2 \times v_1 \times \langle D_x\rangle^{s-1} w_1 \|_{L^1(\R;L^1)}\leq
C\|v\|_{L^2_{t,x}}\|\langle D_x\rangle^{s-1}w_1\|_{L^4(\R;L^4)}\|w_2\|_{L^4(\R;L^4)}\,
$$
where  $v_1(t,x)$ is defined with $\hat{v}_1(\tau,n)=|\hat{v}(\tau,n)|$. Now, using \eqref{FUND}, we write
\begin{align*}
  \|\langle D_x\rangle^{s-1}w_1\|_{L^4(\R;L^4)}
\leq & C\|w_1\|_{X^{s-1,\frac{1}{3}}}=C\|u_1\|_{X^{s-1,\frac{1}{3}}}\\
\|w_1\|_{L^4(\R;L^4)}
\leq  & C\|w_1\|_{X^{0,\frac{1}{3}}}=C\|u_1\|_{X^{0,\frac{1}{3}}}\,.
\end{align*}
Hence we can estimate the contribution to \eqref{Lambda} in the region \eqref{Lambdaprime}
by $$\|u_1\|_{X^{s-1,\frac{1}{3}}}\times \|u_2\|_{X^{0,\frac{1}{3}}}$$
up to a multiplicative constant.
Next we consider the contribution to \eqref{Lambda} in the region 
\begin{equation}\label{regiontwo}
\langle\tau_1+n_1^3\rangle^{\frac{1}{2}}\geq C|n_1|^{\frac{1}{2}}
|n-n_1|^{\frac{1}{2}}
|n |^{\frac{1}{2}}\,.
\end{equation}
Let $v_1(t,x)$ be defined by $$\widehat{v}_1(\tau,n)=\langle \tau+n^3\rangle^{-\frac{1}{2}} |\hat{v}(\tau,n)|$$ and let $w_{11}(t,x)$ be defined as
$$
\hat{w}_{11}(\tau,n)=\langle n\rangle^{s-1} \langle \tau+n^3\rangle^{\frac{1}{2}} \hat{w}_1(\tau,n)\,,
$$ 
then we can estimate the contribution of\eqref{Lambda} in the region \eqref{regiontwo} by
$$
C\|w_{11} \times w_2 \times v_1\|_{L^1(\R;L^1)}
\leq C\|w_{11}\|_{L^2(\R;L^2)}\|w_2\|_{L^4(\R;L^4)}\|v_1\|_{L^4(\R;L^4)}
$$
Using again \eqref{FUND} we obtain that
\begin{align*}
  \|v_1\|_{L^4(\R;L^4)}\leq & C\|v\|_{X^{0,-\frac{1}{6}}}\leq C \|v\|_{L^2(\R;L^2)}\,,\\
\|w_2\|_{L^4(\R;L^4)}\leq & C\|w_2\|_{X^{0,\frac{1}{3}}}\leq C \|u_2\|_{X^{0,\frac{1}{3}}}\,.
\end{align*}
Moreover we have
$$
\|w_{11}\|_{L^2(\R;L^2)}=\|w_1\|_{X^{s-1,\frac{1}{2}}}=\|u_1\|_{X^{s-1,\frac{1}{2}}}
$$
and summarizing we control the contribution to \eqref{Lambda} in the region \eqref{regiontwo} by 
$\|u_1\|_{X^{s-1,\frac{1}{2}}}\times \|u_2\|_{X^{0,\frac{1}{3}}}$
up to a multiplicative factor. Finally consider the contribution to \eqref{Lambda} on the third region 
\begin{equation}\label{thirdregion}
\langle\tau-\tau_1+(n-n_1)^3\rangle^{\frac{1}{2}}\geq C|n_1|^{\frac{1}{2}}
|n-n_1|^{\frac{1}{2}}
|n |^{\frac{1}{2}}\,.
\end{equation}
Retain $v_1(t,x)$ with $\hat{v}_1(\tau,n)=\langle \tau+n^3\rangle^{-\frac{1}{2}} |\hat{v}(\tau,n)|$ and let $w_{21}(t,x)$ be defined with
$\hat{w}_{21}(\tau,n)= \langle \tau+n^3\rangle^{\frac{1}{2}} \hat{w}_2(\tau,n)$. Then we can control the contribution to \eqref{Lambda} in the region \eqref{thirdregion} by 
\begin{align*}
C \| w_{21}\times  v_1 \times \langle D_x\rangle^{s-1} w_1\|_{L^1(\R;L^1)}
\leq & C
\|\langle D_x\rangle^{s-1} w_1\|_{L^4(\R;L^4)}
\|w_{21}\|_{L^2(\R;L^2)}\|v_1\|_{L^4(\R;L^4)}\,\\
\leq & C\|u_1\|_{X^{s-1, \frac 13}} \|u_2\|_{X^{0,\frac 12}}\|v\|_{X^{0, -\frac 16}} 
\end{align*}
where we have used again \eqref{FUND}. Hence the contribution
of \eqref{Lambda} in the region \eqref{thirdregion} can be estimated, up to a multiplicative constant, by 
$\|u_1\|_{X^{s-1, \frac 13}} \times \|u_2\|_{X^{0,\frac 12}}$. Summarizing, we estimate \eqref{Lambda} by 
$$\|u_1\|_{X^{s-1,\frac{1}{2}}}\times \|u_2\|_{X^{0,\frac{1}{3}}}+ \|u_1\|_{X^{s-1, \frac 13}} \times \|u_2\|_{X^{0,\frac 12}}$$
for functions $u_1$ and $u_2$ which are not localized in time.
Recall that by symmetry, in order to estimate the l.h.s. in \eqref{KPV1}, we need to add further terms where the role of $u_1$ and $u_2$ has exchanged.
Hence we have established \eqref{KPV1} which, as already said, in some sense is stronger than \eqref{KPV12}, since less conormal derivatives of $u_1$ and $u_2$ are involved, but 
it is weaker than \eqref{KPV12} since no gain of positive power of $T$ has been obtained in \eqref{KPV1}.
We finally deal with this issue: as a consequence of \cite[Lemma~2.11]{Tao}  (see also \cite[Lemma~3.2]{CO}) there exists $\kappa>0$ such that
\[
\|w\|_{X^{s-1, \frac 13}_T}\leq C T^\kappa  \|w\|_{X^{s-1, \frac 12}_T}, \quad 
\|w\|_{X^{0, \frac 13}_T}\leq C T^\kappa  \|w\|_{X^{0, \frac 12}_T}, 
\]
and we complete the proof of \eqref{KPV12} by combining the estimates above with \eqref{KPV1}, along with a suitable choice of extensions
for $u_1$ and $u_2$, which a priori are defined on the strip of time $(-T,T)$, on the whole space time with a global norm of the extension which is at most twice the corresponding localized norm.
\end{proof}

\end{document}